\definecolor{myteal}{HTML}{00B7AF}
\definecolor{mygold}{HTML}{FDD401}
\definecolor{mycoral}{HTML}{FD5A19}
\noindent\color{magenta}{RG: \  }}{\hfill{$\square$}\color{black}\par\medskip}
\noindent\color{blue}{ZZ: \  }}{\hfill{$\square$}\color{black}\par\medskip}
\newtheorem{theorem}{Theorem}[section]
\newtheorem{lemma}[theorem]{Lemma}
\newtheorem{corollary}[theorem]{Corollary}
\newtheoremstyle{mytheoremstyle} 
    {1em}                    
    {\topsep}                    
    {\rmfamily}                   
    {}                           
    {\bfseries}                   
    {.}                          
    {.5em}                       
    {}  
\theoremstyle{mytheoremstyle}
\newtheorem{definition}[theorem]{Definition}
\newtheorem{remark}[theorem]{Remark}
\newcommand\defeq{\coloneqq} 
\newcommand\dftt{\mathtt{df}\,}
\newcommand\Ratio{\mathbf{Q}} 
\newcommand\Real{\varmathbb{R}} 
\newcommand\Rat{\varmathbb{Q}}
\DeclareMathOperator{\Ult}{Ult} 
\newcommand{\ult}{\mathscr{U}} 
\newcommand\Bmod{\mathop{\langle B\rangle}} 
\newcommand\Nec{\mathop{\left[B\right]}} 
\DeclareMathOperator{\Conv}{C} 
\DeclareMathOperator{\at}{@}
\newcommand{\tand}{\text{ and }}
\newcommand{\tor}{\text{ or }}
\newcommand{\qtor}{\quad\text{ or }\quad}
\newcommand{\qtand}{\quad\text{ and }\quad}
\DeclareMathOperator{\Ex}{\mathsf{E}}
\DeclareMathOperator{\All}{\mathsf{A}}
\DeclareMathOperator{\ue}{\mathfrak{ue}} 
\newcommand{\Rue}{\mathrel{R^{\ue}}}
\newcommand{\Var}{\mathrm{Prop}} 
\newcommand{\Nom}{\Omega} 
\DeclareMathOperator{\Th}{Th} 
\newcommand{\LBWE}{\boldsymbol{\mathsf{LBWE}}} 
\newcommand{\DLBWE}{\boldsymbol{\mathsf{DLBWE}}} 
\newcommand{\CDLBWE}{\boldsymbol{\mathsf{CDLBWE}}} 
\newcommand{\SCDLBWE}{\boldsymbol{\mathsf{SCDLBWE}}} 
\newcommand{\DLOWE}{\boldsymbol{\mathsf{DLOWE}}} 
\newcommand{\LOWE}{\boldsymbol{\mathsf{LOWE}}} 
\newcommand{\Klass}{\boldsymbol{\mathsf{K}}} 
\newcommand{\bDelta}{\boldsymbol{\Phi}}
\newcommand{\gsub}{\rightarrowtail} 
\newcommand\Iffdef{\;\mathrel{\mathord{:}\mathord{\Longleftrightarrow}}\;}
\newcommand\iffdef{\;\mathrel{\mathord{:}\mathord{\longleftrightarrow}}\;}
\newcommand{\Bh}{\ensuremath{\mathbf{B}_h}}
\newcommand{\Bhp}{\ensuremath{\mathbf{B}_h^+}}
\newcommand{\frFG}{\frF^{\Gamma}}
\newcommand{\frFGp}{\frFG_{+}}
\newcommand{\frMG}{\frM^{\Gamma}}
\newcommand{\frMGp}{\frMG_{+}}
\newcommand{\frGast}{\frG^{\ast}}
\newcommand{\frMast}{\frM^{\ast}}
\newcommand{\VG}{V^{\Gamma}}
\newcommand{\VGp}{\VG_{+}}
\newcommand{\BGp}{B^{\Gamma}_{+}}
\newcommand{\Lb}{\calL_{\beta}} 
\newcommand{\Hb}{\calH_{\beta}} 
\newcommand{\Hbat}{\calH_{\beta}(\at)} 
\newcommand{\Hbex}{\calH_{\beta}(\Ex)} 
\newcommand{\Hbatex}{\calH_{\beta}(\at,\Ex)} 
\newcommand{\Iff}{\Longleftrightarrow}
\newcommand\Rarrow{\Rightarrow}
\newcommand\Larrow{\Leftarrow}
\newcommand\rarrow{\rightarrow}
\renewcommand\iff{\longleftrightarrow}
\newcommand{\cal}{\mathcal}
\newcommand{\calH}{\cal{H}}
\newcommand{\calL}{\cal{L}}
\newcommand\mfr{\mathfrak}
\newcommand\frF{\mfr{F}}
\newcommand\frG{\mfr{G}}
\newcommand\frI{\mfr{I}}
\newcommand\frL{\mfr{L}}
\newcommand\frM{\mfr{M}}
\newcommand\frQ{\mfr{Q}}
\newcommand\frR{\mfr{R}}
\newcommand\frW{\mfr{W}}
\newcommand\suchthat{\,\middle\vert\,} 
\DeclareSymbolFont{lettersA}{U}{txmia}{m}{it}
\DeclareMathSymbol{\m@thbbch@rA}{\mathord}{lettersA}{129}
\DeclareMathSymbol{\m@thbbch@rB}{\mathord}{lettersA}{130}
\DeclareMathSymbol{\m@thbbch@rC}{\mathord}{lettersA}{131}
\DeclareMathSymbol{\m@thbbch@rD}{\mathord}{lettersA}{132}
\DeclareMathSymbol{\m@thbbch@rE}{\mathord}{lettersA}{133}
\DeclareMathSymbol{\m@thbbch@rF}{\mathord}{lettersA}{134}
\DeclareMathSymbol{\m@thbbch@rG}{\mathord}{lettersA}{135}
\DeclareMathSymbol{\m@thbbch@rH}{\mathord}{lettersA}{136}
\DeclareMathSymbol{\m@thbbch@rI}{\mathord}{lettersA}{137}
\DeclareMathSymbol{\m@thbbch@rJ}{\mathord}{lettersA}{138}
\DeclareMathSymbol{\m@thbbch@rK}{\mathord}{lettersA}{139}
\DeclareMathSymbol{\m@thbbch@rL}{\mathord}{lettersA}{140}
\DeclareMathSymbol{\m@thbbch@rM}{\mathord}{lettersA}{141}
\DeclareMathSymbol{\m@thbbch@rN}{\mathord}{lettersA}{142}
\DeclareMathSymbol{\m@thbbch@rO}{\mathord}{lettersA}{143}
\DeclareMathSymbol{\m@thbbch@rP}{\mathord}{lettersA}{144}
\DeclareMathSymbol{\m@thbbch@rQ}{\mathord}{lettersA}{145}
\DeclareMathSymbol{\m@thbbch@rR}{\mathord}{lettersA}{146}
\DeclareMathSymbol{\m@thbbch@rS}{\mathord}{lettersA}{147}
\DeclareMathSymbol{\m@thbbch@rT}{\mathord}{lettersA}{148}
\DeclareMathSymbol{\m@thbbch@rU}{\mathord}{lettersA}{149}
\DeclareMathSymbol{\m@thbbch@rV}{\mathord}{lettersA}{150}
\DeclareMathSymbol{\m@thbbch@rW}{\mathord}{lettersA}{151}
\DeclareMathSymbol{\m@thbbch@rX}{\mathord}{lettersA}{152}
\DeclareMathSymbol{\m@thbbch@rY}{\mathord}{lettersA}{153}
\DeclareMathSymbol{\m@thbbch@rZ}{\mathord}{lettersA}{154}
\long\def\DoLongFutureLet #1#2#3#4{%
   \def\@FutureLetDecide{#1#2\@FutureLetToken
      \def\@FutureLetNext{#3}\else
      \def\@FutureLetNext{#4}\fi\@FutureLetNext}
   \futurelet\@FutureLetToken\@FutureLetDecide}
\def\DoFutureLet #1#2#3#4{\DoLongFutureLet{#1}{#2}{#3}{#4}}
\def\@EachCharacter{\DoFutureLet{\ifx}{\@EndEachCharacter}%
   {\@EachCharacterDone}{\@PickUpTheCharacter}}
\def\m@keCharacter#1{\csname\F@ntPrefix#1\endcsname}
\def\@PickUpTheCharacter#1{\m@keCharacter{#1}\@EachCharacter}
\def\@EachCharacterDone \@EndEachCharacter{}
\DeclareRobustCommand*{\varmathbb}[1]{\gdef\F@ntPrefix{m@thbbch@r}%
  \@EachCharacter #1\@EndEachCharacter}
\date{}
\title{Hybrid logic of strict betweenness}
\author{Rafa\l{} Gruszczy\'nski, Zhiguang Zhao}
\address{Rafa\l\ Gruszczy\'nski\\
Department of Logic\\
Nicolaus Copernicus University in Toru\'n\\
Poland\\
\textsc{Orcid:} 0000-0002-3379-0577}
\email{gruszka@umk.pl}
\urladdr{www.umk.pl/\textasciitilde gruszka}
\address{Zhiguang Zhao\\
School of Mathematics and Statistics\\
Taishan University\\
P.R.China\\
\textsc{Orcid:} 0000-0001-5637-945X}
\email{zhaozhiguang23@gmail.com}
\begin{document}

\begin{abstract}
    The paper is devoted to modal properties of the ternary strict betweenness relation as used in the development of various systems of geometry. We show that such a~relation is non-definable in a~basic similarity type with a~binary operator of possibility, and we put forward two systems of hybrid logic, one of them complete with respect to the class of dense linear betweenness frames without endpoints, and the other with respect to its subclass composed of Dedekind complete frames.

    \medskip

    \noindent MSC: 03B45, 53C75

\medskip

\noindent Keywords: modal logic, hybrid logic, binary operators, ternary relations, betweenness relation
\end{abstract}

\maketitle

\section{Introduction}

In this paper we investigate mutual dependencies between a ternary strict betweenness relation and hybrid logic with a~binary operator of possibility and a~family of satisfaction operators in the sense of \citep[7.3 Hybrid logic]{Blackburn-et-al-ML}. The betweenness relation we put into focus is the cornerstone of order fragments of axiomatic systems of Hilbert-style geometry, and in choosing the constraints, we stay as close to geometry as possible. In this, our work differs from that of \cite{Duntsch-et-al-BA} where the authors scrutinize a class of reflexive betweenness frames that encompass very general aspects of betweenness, common to many, not necessarily geometric in spirit, approaches.  The other difference is hidden in the techniques applied: \cite{Duntsch-et-al-BA} use Boolean algebras with operators, while we adhere to logic. The two papers share the idea for the possibility operator $\Bmod$ as defined in \citep{vanBenthem-MLS}.

In Section~\ref{sec:betweenness}, we list the first-order axioms for strict betweenness, the inspiration for these being the axiomatization of betweenness by \cite{Borsuk-Szmielew-FG}. We single out two classes of frames: linear betweenness frames without endpoints ($\LBWE$) and dense linear betweenness frames without endpoints ($\DLBWE$). These will play a crucial role in the sequel. Moreover, in this section we draw a comparison between betweenness frames and standard binary linearly ordered structures. Section~\ref{sec:hybrid-logic} contains a semantical characterization of the modal language we use.

In Section~\ref{sec:non-definability} we deal with the problem of non-definability of first-order properties of betweennes either by means of a~pure modal language or a modal language enriched with the satisfaction operators. To do this, we generalize ten Cate's Goldblatt-Thomason style theorem \cite[Theorem 4.3.1]{tenCate-MTFEML} to arbitrary modal types. Applying the generalized theorem, we show that the class of dense linear betweenness frames without endpoints is not definable in the extended language with the satisfaction operators. In Section~\ref{sec:definability} we show that the class $\DLBWE$ can be defined by means of pure formulas of hybrid logic. Section~\ref{sec:definability-density} pursues the problem of definability, focusing on the particular case of the density axiom. We prove that the property can be expressed by means of a pure formula of the extended language with the satisfaction operators with respect to the class $\LBWE$. This is a key result that allows us to find a pure hybrid axiomatization of dense linear betweenness frames without endpoints in Section~\ref{sec:hybrid-logic-of-strict-betweenness}. Therein, we put forward logic $\Bh$ which is sound and complete with respect to $\DLBWE$. In particular, we obtain that $\Bh$ is a~logic of the frame $\frQ\defeq\langle\Rat,B_{<}\rangle$ obtained from the unique---up to isomorphism---countable dense linear order without endpoints, i.e.,  $\langle\Rat,<\rangle$.

In the same section, we go beyond the elementary classes of frames considering \emph{Dedekind complete} elements of $\DLBWE$, the class $\CDLBWE$. We propose a~modal counterpart of the completeness axiom couched in the language with the existential modality, and we consider the logic $\Bhp$ extending $\Bh$ with the axiom. We prove soundness and completeness of this logic with respect to $\CDLBWE$. In consequence, we prove that $\Bhp$ is a logic of the frame $\frR\defeq\langle\Real,B_{<}\rangle$ obtained from the real line $\langle\Real,<\rangle$.

Throughout, we use the standard logical notations with $\bot$, $\neg$, $\wedge$, $\vee$, $\rightarrow$, $\leftrightarrow$, $\forall$ and $\exists$ being, respectively, falsum constant, negation, conjunction, disjunction, material implication, material equivalence, universal and existential quantifier. The symbols $\Rarrow$ and $\Iff$ are reserved for meta-implication and meta-equivalence, $\Iffdef$ is an equivalence by definition. Concerning the terminology, unless otherwise stated, we follow \citep{Blackburn-et-al-ML}. In particular,  `frame' means always a Kripke frame, and an $n$-frame is a frame with a single $n$-ary relation.

\section{Betweenness relations and linear orders}\label{sec:betweenness}
Let $\frF\defeq\langle W,B\rangle$ be a 3-frame. We will read $B(x,y,z)$ as \emph{$y$ is between $x$ and $z$}, and in this way---for heuristic reasons---we distinguish the middle coordinate. $\#(x,y,z)$ means that the elements of the set $\{x,y,z\}$ are pairwise different.

For the basic characterization of (strict) betweenness, we are going to use the following standard set of first-order axioms:
\begin{gather}
    B(x,y,z)\rarrow\#(x,y,z)\,,\tag{B1}\label{B1}\\
    B(x,y,z)\rarrow B(z,y,x)\,,\tag{B2}\label{B2}\\
    B(x,y,z)\rarrow\neg B(x,z,y)\,,\tag{B3}\label{B3}\\
    B(x,y,z)\wedge B(y,z,u)\rarrow B(x,y,u)\,,\tag{B4}\label{B4}\\
    B(x,y,z)\wedge B(y,u,z)\rarrow B(x,y,u)\,,\tag{B5}\label{B5}\\
    \#(x,y,z)\rarrow B(x,y,z)\vee B(x,z,y)\vee B(y,x,z)\,,\tag{B6}\label{B6}\\
    \forall y\exists x\exists       z\,B(x,y,z)\,,\tag{B7}\label{B7}\\
    x\neq z\rarrow \exists y\,B(x,y,z)\,.\tag{B8}\label{B8}
\end{gather}

\eqref{B1} says that $B$ is a strict relation, which according to \eqref{B2} is symmetrical on its first and third projections. \eqref{B3} postulates asymmetry on second and third coordinates. \eqref{B4} and \eqref{B5} are, respectively, outer and inner transitivity axioms. \eqref{B6} stipulates linearity\footnote{Of course, in presence of \eqref{B2}. Without the symmetry axiom to stipulate linearity we would need six disjuncts in the consequent of the axiom. See also the proof of Theorem~\ref{th:non-definability} for the case of \eqref{B6}.}, \eqref{B7} is unboundedness axiom, and \eqref{B8} the density axiom. These are precisely the \emph{order} axioms that form a fragment of the system of geometry from \citep{Borsuk-Szmielew-FG}. The proofs of other elementary properties of betweenness that we use throughout the paper can be found in the appendix.

\begin{definition}
  Any 3-frame $\frF$ that satisfies \eqref{B1}--\eqref{B7} will be called \emph{a linear betweenness frame without endpoints}.  The class of all such frames will be denoted by $\LBWE$. The elements of the class
  \[
    \DLBWE\defeq\LBWE+\eqref{B8}
  \]
  will be called \emph{dense} linear betweenness frames without endpoints.
\end{definition}

Let $\DLOWE$ be the class of all dense linear orders without endpoints, i.e., the class of all 2-frames $\langle W,<\rangle$ satisfying the following axioms:
\begin{gather}
    x<y\rarrow y\not< x\,,\tag{L1}\\
    x<y\wedge y<z\rarrow x<z\,,\tag{L2}\\
    x\neq y\rarrow x<y\vee y<x\,,\tag{L3}\\
    \forall x\exists y\exists z\,y<x<z\,,\tag{L4}\\
    \forall x\forall y\,(x<y\rarrow\exists z(x<z<y))\,.\tag{L5}
\end{gather}
Let $\LOWE$ be the class of frames that satisfy first four constraints. It is obvious that if $\frL\defeq\langle W,<\rangle\in\DLOWE$, then the relation $B_<$ such that:
\[
B_<(x,y,z)\Iffdef x<y<z\tor z<y<x
\]
is a betweenness relation and $\langle W,B_<\rangle\in\DLBWE$ (analogous relation holds between $\LOWE$ and $\LBWE$). A slightly less obvious---albeit well-known---fact is that given a betweenness $B$ we may define a binary relation $<_B$ that meets the axioms for the class $\DLOWE$ (or $\LOWE$). Depending on the orientation we chose for the transition from $B$ to $<_{B}$, we obtain a pair of dually equivalent linear orders.\footnote{The details of the construction can be found in \citep{Borsuk-Szmielew-FG}.} Thus, there is a one-to-one correspondence between the class $\DLBWE$ (resp. $\LBWE$) and the class of pairs of dual orders from $\DLOWE$ (resp. $\LOWE$). This correspondence extends to non-elementary classes with the additional second-order completeness axiom (which we address in Section~\ref{sec:hybrid-logic-of-strict-betweenness}).

As it can be seen from the above, the notion of \emph{betweenness} of ours is a very particular one and bounded with geometry. Let us emphasize one more time that this is not the only approach to betweenness that can be found in the literature. For example, betweenness as obtained from binary relations is the subject matter of \citep{Altwegg-ZADTGM}, \citep{Sholander-TLOAB}, \citep{Dvelmeyer-et-al-ACOOSALVBR}, \citep{Duntsch-et-al-BACOFBR}, and also---as a strict relation---of \citep{Lihova-SOB} and \citep{Courcelle-BOPO}. A betweenness as a~ternary relation on vertices of a graph is studied in  \citep{Changat-et-al-BIGASSOSAIPB}, and as a concept on trees in \citep{Sholander-TLOAB} and \citep{Courcelle-BIOTT}. The works we cite form only a fraction of the literature devoted to betweenness, with the purpose being to turn the reader's attention to the fact that the relation can be studied from different perspectives.

\section{Hybrid logic with binary operators}\label{sec:hybrid-logic}

As the betweenness relation is ternary for its modal analysis we will need a binary modal operator $\Bmod$. Let $\beta$ be the modal similarity type with $\Bmod$ as the only operator. The modal language $\Lb$ is given by the following definition
\[
\varphi\defeq \top\mid p\mid \neg\varphi\mid\varphi\wedge\psi\mid\Bmod(\varphi,\psi)\,.
\]
The basic hybrid language $\Hb$ is a two-sorted language that is an expansion of $\Lb$ with nominals $i,j,k,l$ (indexed if necessary)
\[
\varphi\defeq \top\mid p\mid i\mid \neg\varphi\mid\varphi\wedge\psi\mid\Bmod(\varphi,\psi)\,.
\]
The set of all propositional letters will be denoted by `$\Var$', and the set of nominals by `$\Nom$'. We assume that $\Var\cap\Nom=\emptyset$.

Last but not least, the hybrid language that plays the most important role in this paper is $\Hbat$ that expands $\Hb$ with a family of satisfaction operators $\at_i$, one for every nominal
\[
\varphi\defeq \top\mid p\mid i\mid \neg\varphi\mid\varphi\wedge\psi\mid\Bmod(\varphi,\psi)\mid\at_i\varphi\,.
\]

The basic idea for an operator $\Bmod$ comes from \cite{vanBenthem-MLS}. Given a model $\frM\defeq\langle\frF,V\rangle$ based on a 3-frame $\frF$ we characterize the semantic for $\Bmod$ in the following way
\begin{equation}\tag{$\dftt{\Bmod}$}\label{df:Bmod}
\begin{split}
    \frM,w\Vdash\Bmod(\varphi,\psi)\Iffdef{}&\\ (\exists x,y\in W)\,&{}(\frM,x\Vdash\varphi\tand\frM,y\Vdash\psi\tand B(x,w,y))\,.
\end{split}
\end{equation}
$\Bmod$ gives rise to a natural unary \emph{convexity} operator
\begin{equation}\tag{$\dftt{\Conv}$}\label{df:Conv}
    \Conv\varphi\Iffdef\Bmod(\varphi,\varphi)\,.
\end{equation}

\begin{definition}
    Given a frame $\frF\defeq\langle W,B\rangle$, a \emph{valuation} function is any function $V\colon\Var\cup\Nom\to2^W$ such that for every nominal $i$, $V(i)$ is a singleton subset of the universe.
\end{definition}

Recall that the semantics of the at operator is given by the following:
\begin{equation}\tag{$\dftt{\at_i}$}
\frM,w\Vdash\at_i\varphi\Iffdef\frM,V(i)\Vdash\varphi\,.
\end{equation}
We can see that:
\begin{equation*}
\begin{split}
    \frM,w\Vdash\Bmod(i,j)\Iff{}&\\ (\exists x,y\in W)\,&(V(i)=\{x\}\tand V(j)=\{y\}\tand B(x,w,y))\,.
\end{split}
\end{equation*}

To simplify things, sometimes $V(i)$ is identified with its element and it is written, e.g., $B(V(i),w,V(j))$. Thus we can express the above equivalence as
\begin{equation*}
    \frM,w\Vdash\Bmod(i,j)\Iff B(V(i),w,V(j))\,.
\end{equation*}
We will also omit the letter `$\frM$' and write, e.g., $w\Vdash\varphi$ instead, if it is clear from the context or irrelevant what model is taken into account. Obviously:
\[
w\Vdash\at_i\Bmod(j,k)\Iff V(i)\Vdash\Bmod(j,k)\Iff B(V(j),V(i),V(k))\,.
\]

In a couple of places, we will also consider the language $\Hbatex$, which is an expansion of $\Hbat$ with the existential modality $\Ex$
\begin{equation}\tag{$\dftt{\Ex}$}
    w\Vdash\Ex\varphi\Iffdef(\exists x\in W)\,x\Vdash\varphi\,.
\end{equation}

\section{Non-definability}\label{sec:non-definability}

\subsection{The tools}

We are going to show that those frame properties that correspond to axioms \eqref{B1},\eqref{B3}--\eqref{B6} are not $\calL_{\beta}$-definable, and that density---expressed by \eqref{B8}---is not $\Hbat$-definable. For the latter, we generalize the theorem below to Kripke frames of arbitrary arity.

\begin{theorem}[{\citealp{tenCate-MTFEML}}]\label{th:tenCate-at-definable}
        A~first-order definable class of binary Kripke frames is $\calH(\at)$-definable iff it is closed under taking ultrafilter morphic images and generated subframes.
\end{theorem}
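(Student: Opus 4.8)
The plan is to prove both implications in the style of the Goldblatt--Thomason theorem, with nominals and satisfaction operators forcing a few departures from the classical argument; I write $\Lambda$ for a set of $\calH(\at)$-formulas and assume throughout that the class in question is first-order definable. For the ``only if'' direction, assume $K=\{\frF:\frF\models\Lambda\}$. Closure under generated subframes is a routine invariance lemma: the truth of an $\calH(\at)$-formula at a world is insensitive to restricting to the submodel generated by that world --- each $\at_i$ relocates evaluation to the unique world named by $i$, which is retained in the submodel, and the submodel is closed under accessibility --- so, given a valuation on a generated subframe, one extends it to the ambient frame keeping nominal-valuations singletons, applies validity of $\Lambda$ there, and restricts back. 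For closure under ultrafilter morphic images I would combine the standard fact that $\Ult(\frF)$ is a surjective bounded morphic image of a suitable ultrapower $\prod_U\frF$, first-order definability of $K$ (which keeps $\prod_U\frF$ in $K$), and preservation of $\calH(\at)$-validity along surjective bounded morphisms, where occurrences of nominals are handled by choosing preimages of named worlds and tracking polarities; a bounded morphic image of $\Ult(\frF)$ is then a bounded morphic image of $\prod_U\frF$, hence validates $\Lambda$ and lies in $K$.

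For the ``if'' direction, assume $K$ is closed under generated subframes and ultrafilter morphic images, and let $\Lambda$ collect all $\calH(\at)$-formulas valid on $K$. The inclusion $K\subseteq\{\frF:\frF\models\Lambda\}$ is immediate, so take $\frF\models\Lambda$ and aim to realise $\frF$ as a generated subframe of a bounded morphic image of $\Ult(\frG)$ for some $\frG\in K$, whence $\frF\in K$ by the two closure hypotheses. The route mimics the classical proof on the algebraic side: attach to $\frF$ its ``$\calH(\at)$-complex algebra'' (the complex algebra of $\frF$ enriched with the atoms interpreting nominals and the operation interpreting $\at$), observe that $\frF\models\Lambda$ places this structure in the class generated by the analogous algebras of members of $K$, write it as a homomorphic image of a subalgebra of a product of such algebras, and dualise --- the product to an ultrafilter extension of a disjoint-union-type frame, the subalgebra embedding to a surjective bounded morphism, the homomorphic image to a generated-subframe embedding --- using elementarity of $K$ (closure under ultraproducts and elementary equivalence) to keep the intermediate frame inside $K$. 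Equivalently, one can run a named, pasted canonical model: expand the signature by a fresh nominal for every world of $\frF$, take a maximal $\calH(\at)$-consistent, named and pasted set extending the resulting theory, build its canonical model, use first-order definability and compactness to locate (an elementary extension of) it inside $K$, and finally carve $\frF$ back out.

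The second direction is where I expect the real work, and where the argument genuinely goes beyond classical Goldblatt--Thomason: one must arrange the duality (or the named canonical model) so that nominals stay interpreted as actual singletons throughout and the satisfaction operators are respected by all the dual maps, so that $\frF$ is recovered up to \emph{isomorphism} as a generated subframe rather than merely up to bisimulation (as happens in the nominal-free case), and so that one never needs to pass through a single disjoint union over a class --- which is exactly why closure under disjoint unions can be dropped here. Verifying that the frame produced at the end really is a bounded morphic image of an ultrafilter extension of a member of $K$ is the delicate point.
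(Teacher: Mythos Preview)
There is a genuine gap, and it stems from a misreading of the notion of \emph{ultrafilter morphic image}. In the paper (following ten Cate), $\frG$ is an ultrafilter morphic image of $\frF$ when there is a surjective bounded morphism $f\colon\frF\to\ue\frG$ that is \emph{injective on preimages of principal ultrafilters}. Your proposal treats it as ``bounded morphic image of $\ue\frF$'', which is a different relation, and this misreading infects both halves of your argument.

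In the ``only if'' direction your plan relies on preservation of $\calH(\at)$-validity along surjective bounded morphisms, ``handling nominals by choosing preimages''. That preservation is simply false for $\calH(\at)$: if $f\colon\frF\to\frG$ collapses two points to the same $w\in\frG$, then for a nominal $i$ with $V(i)=\{w\}$ there is no singleton preimage you can assign to $i$ on $\frF$ that makes the usual transfer lemma go through for formulas containing $i$. The injectivity-on-principal-ultrafilters clause is there precisely to rescue this step, and the actual argument (Lemma~\ref{lem:ultra:image:HE} here, Proposition~4.2.6 in ten Cate) passes through $\calH(\Ex)$-bisimulations rather than the bounded-morphism transfer you invoke. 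Your ultrapower detour (``$\ue\frF$ is a bounded morphic image of $\prod_U\frF$'') is not relevant: you need to move from $\frF$ to $\ue\frG$ and then to $\frG$, not from an ultrapower of $\frF$ to $\ue\frF$.

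In the ``if'' direction your stated target---realising $\frF$ as a generated subframe of a bounded morphic image of $\ue\frG$---is again the wrong shape. What the paper (and ten Cate) actually produce is a surjective bounded morphism $f\colon\frG^{\ast}\to\ue\frF$ for some $\frG^{\ast}\in\Klass$, injective on preimages of principal ultrafilters, so that $\frF$ is an ultrafilter morphic image of $\frG^{\ast}$. The construction is concrete: introduce a propositional variable $p_A$ for each $A\subseteq W$ and a nominal $i_w$ for each $w\in W$, encode the complex algebra of $\frF$ together with the ``$i_w\leftrightarrow p_{\{w\}}$'' identifications as a set $\bDelta_{\frF}$ of $\at$-prefixed formulas, use compactness (via $\frF\Vdash\Th(\Klass)$) to satisfy $\bDelta_{\frF}$ on some $\frG\in\Klass$, cut down to the subframe generated by the named points, pass to an $\omega$-saturated elementary extension $\frG^{\ast}$, and set $f(v)\defeq\{A\subseteq W\mid v\Vdash p_A\}$. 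Surjectivity and the back/forth conditions come from saturation and the $\bDelta$-equations; the crucial injectivity on principal ultrafilters comes from the nominals $i_w$. Your algebraic and canonical-model sketches gesture in a compatible direction, but until you correct the direction of the map and build in the principal-ultrafilter injectivity explicitly, the argument does not close.
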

\noindent Above, $\calH(\at)$ is defined as
\[
\varphi\defeq \top\mid p\mid i\mid \neg\varphi\mid\varphi\wedge\psi\mid\Diamond\varphi\mid\at_i\varphi\,.
\]

The well-known notions of a \emph{bounded morphism} and a \emph{generated subframe} for binary relations have natural generalizations to $n$-ary ones.

\begin{definition}
    If $\langle W,R\rangle$ and $\langle W',R'\rangle$ are $n$-frames, then a mapping $f\colon W\to W'$ is a \emph{bounded morphism} if
    \begin{enumerate}[label=(\arabic*),itemsep=0pt]
        \item if $R(x_1,\ldots,x_n)$, then $R'(f(x_1),\ldots,f(x_n))$ (i.e,. $f$ preserves $R$, i.e., satisfies the forth condition),
        \item if $R'(f(w),x_1',\ldots,x_{n-1}')$, then there are $x_1,\ldots,x_{n-1}\in W$ such that for all $i\leqslant n-1$, $f(x_i)=x_i'$ and $R(w,x_1,\ldots,x_{n-1})$ (i.e., $f$ reflects $R'$, i.e., $f$ satisfies the back condition).\footnote{\citep[see p.\,140]{Blackburn-et-al-ML}}
    \end{enumerate}
\end{definition}

\begin{definition}
    Two $n$-frames $\frF_1\defeq\langle W_1,R_1\rangle$ and $\frF_2\defeq\langle W_2,R_2\rangle$ are \emph{disjoint} iff $W_1\cap W_2=\emptyset$. An indexed class $\{\frF_i\}_{i\in I}$ of frames is \emph{disjoint} iff its elements are pairwise disjoint. If $\{\frF_i\}_{i\in I}$ is a disjoint class of frames of the same modal similarity type $\tau$, then their \emph{disjoint union} is the frame $\biguplus_{i\in I}\frF_i\defeq\left\langle\bigcup_{i\in I}U_i,\bigcup_{i\in I}R_i\right\rangle$.
\end{definition}

\begin{definition}
    $\frF'\defeq\langle W',R'\rangle$ is a \emph{generated subframe} of $\frF\defeq\langle W,R\rangle$ (in symbols: $\frF'\gsub\frF$)  if $W'\subseteq W$, $R'=R\cap W^n$ (i.e., $\frF'$ is a subframe of $\frF$) and:
    \[
        \text{if}\ y\in W'\tand R(y,x_1,\ldots,x_{n-1})\,,\ \text{then}\ x_1,\ldots,x_{n-1}\in W'\,.
    \]
\end{definition}

For generated submodels of the language $\calH_\tau(\at)$, where $\tau$ is an arbitrary modal similarity type, we need a slight modification in comparison to the standard definition for modal languages.

\begin{definition}
    $\frM'\defeq\langle W',R',V'\rangle$ is a \emph{generated submodel} of $\frM\defeq\langle W,R,V\rangle$ (in symbols: $\frM'\gsub\frM$) iff $\langle W',R'\rangle$ is a generated subframe of $\langle W,R\rangle$ and
    \begin{enumerate}[label=(\arabic*),itemsep=0pt]
        \item for every $p\in\Var$, $V'(p)=V(p)\cap W'$,
        \item $V[\Nom]\subseteq W'$ and for every $i\in\Nom$, $V'(i)=V(i)$.
    \end{enumerate}
\end{definition}

\begin{definition}
    Given an $n$-frame $\frF\defeq\langle W,R\rangle$ we define an operation
    \[
    m_R\colon (2^W)^{n-1}\to2^W
    \]
    such that
    \begin{equation*}
    \begin{split}
    m_R(X_1,\ldots,X_{n-1})\defeq\{w\in W\mid\text{there are}\ w_1\in X_1,\ldots,w_{n-1}\in X_{n-1}\\
    \text{such that}\ R(w,w_1,\ldots,w_{n-1})\}\,.
    \end{split}
    \end{equation*}
    An \emph{ultrafilter extension} of an n-frame $\frF\defeq\langle W,R\rangle$ is the frame
    \[
    \ue\frF\defeq\langle\Ult(W),\Rue\rangle
    \]
    such that $\Ult(W)$ is the set of all ultrafilters of the power set algebra of $W$ and
    \[
    \mathord{\Rue}(\ult,\ult_1,\ldots,\ult_{n-1})\Iffdef m_R[\ult_1\times\ldots\times\ult_{n-1}]\subseteq\ult\,.
    \]
\end{definition}

\begin{definition}
    A frame $\frG$ is an \emph{ultrafilter bounded morphic image} of $\frF$ iff there is a surjective bounded morphism $f\colon\frF\to\ue\frG$ such that for every principal ultrafilter $\ult$ in $\ue\frG$ it is the case that $|f^{-1}(\{\ult\})|=1$.\footnote{The condition $|f^{-1}(\{\ult\})|=1$ says that $f$ is injective w.r.t. principal ultrafilters, and the idea is that such ultrafilters correspond to nominals.}
\end{definition}

The following seminal theorem will be applied to show non-definability of \eqref{B1}, \eqref{B3}--\eqref{B6}.

\begin{theorem}[{\citealp{Goldblatt-Thomason-ACIPML}}]\label{th:GT}
    Let $\tau$ be a modal similarity type. A~first-order definable class of $\tau$ frames is possibility definable iff it is closed under taking bounded morphic images, generated subframes, disjoint unions, and reflects ultrafilter extensions.
\end{theorem}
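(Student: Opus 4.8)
The plan is to run the classical algebraic proof of the Goldblatt--Thomason theorem, checking that each step survives the passage from a binary to an $n$-ary accessibility relation; since bounded morphisms, generated subframes, disjoint unions and ultrafilter extensions of $n$-frames have all been set up above, this is mostly bookkeeping. For the forward direction one establishes the usual preservation facts: validity of a modal $\tau$-formula $\varphi$ on a frame is inherited by its generated subframes, by its bounded morphic images, and by disjoint unions of copies of it, and is \emph{reflected} by ultrafilter extensions, i.e.\ $\ue\frF\models\varphi$ implies $\frF\models\varphi$. The first three are proved by transporting a putative refuting model along the relevant map, using that bounded morphisms preserve and reflect satisfaction and that generated submodels agree pointwise with the ambient model. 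The fourth follows from the polyadic ultrafilter-extension theorem, namely $\frM,w\Vdash\varphi$ iff $\ue\frM,\pi_w\Vdash\varphi$ for every model $\frM$ on $\frF$, every $w$, and every $\varphi$, where $\pi_w$ is the principal ultrafilter at $w$ and $\ue\frM$ carries the valuation $p\mapsto\{u\in\Ult(W)\mid V(p)\in u\}$---the inductive clause for the modal operators being precisely where the definition of $R^{\ue}$ in terms of $m_R$ is used. Hence any class cut out by a set of valid formulas satisfies all four closure conditions.

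For the converse, let $K$ be a first-order definable class that is closed under bounded morphic images, generated subframes and disjoint unions and reflects ultrafilter extensions. Put $\Lambda$ for the set of modal $\tau$-formulas valid on every member of $K$; then $K\subseteq\mathrm{Fr}(\Lambda)$, and it remains to prove $\mathrm{Fr}(\Lambda)\subseteq K$. Fix $\frF\models\Lambda$ and pass to its complex algebra $\frF^{+}$, a Boolean algebra with operators of type $\tau$. Since validity of a formula on a frame is the same as the corresponding equation holding in its complex algebra, $\frF^{+}$ lies in the variety $\mathbf{V}$ defined by $\{\varphi\approx\top\mid\varphi\in\Lambda\}$; and since the equational theory of the class $\{\frG^{+}\mid\frG\in K\}$ is, modulo routine Boolean manipulations, exactly this set of equations, Birkhoff's theorem yields $\mathbf{V}=\mathbf{HSP}\{\frG^{+}\mid\frG\in K\}$. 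Thus $\frF^{+}$ is a homomorphic image of a subalgebra $\mathfrak{B}$ of a product $\prod_i\frG_i^{+}$ with all $\frG_i\in K$. Via the isomorphism $\prod_i\frG_i^{+}\cong\bigl(\biguplus_i\frG_i\bigr)^{+}$ and closure of $K$ under disjoint unions, we may take $\mathfrak{B}\leq\frG^{+}$ for a single $\frG\in K$, together with a surjection $\mathfrak{B}\twoheadrightarrow\frF^{+}$.

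Dualising through the ultrafilter-frame functor $(-)_{+}$, and using $(\frF^{+})_{+}=\ue\frF$ and $(\frG^{+})_{+}=\ue\frG$, the surjection $\mathfrak{B}\twoheadrightarrow\frF^{+}$ becomes an isomorphism of $\ue\frF$ onto a generated subframe of $\mathfrak{B}_{+}$, and the inclusion $\mathfrak{B}\hookrightarrow\frG^{+}$ becomes a surjective bounded morphism $\ue\frG\twoheadrightarrow\mathfrak{B}_{+}$; hence $\ue\frF$ is a generated subframe of a bounded morphic image of $\ue\frG$. It remains to place $\ue\frG$ inside $K$, and here first-order definability is essential: by a standard lemma $\ue\frG$ is a bounded morphic image of an $\omega$-saturated elementary extension $\frG^{\ast}$ of $\frG$ (realisable as an ultrapower)---after adjoining to the language a unary predicate for every $X\subseteq W$, one sends $a\in\frG^{\ast}$ to $\{X\subseteq W\mid a\in X^{\frG^{\ast}}\}$, with $\omega$-saturation supplying the back-condition. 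As $K$ is first-order definable it is closed under ultrapowers, so $\frG^{\ast}\in K$; then $\ue\frG\in K$ (closure under bounded morphic images), hence $\ue\frF\in K$ (closure under bounded morphic images and generated subframes), hence $\frF\in K$ (reflection of ultrafilter extensions). Therefore $\mathrm{Fr}(\Lambda)=K$ and $K$ is possibility definable.

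I expect the two genuine points of friction, both caused by moving from binary to $n$-ary relations, to be: (i) checking that the J\'onsson--Tarski duality---surjective homomorphisms of $\tau$-BAOs corresponding to generated-subframe inclusions of the associated ultrafilter frames, and embeddings to surjective bounded morphisms---holds verbatim when the operators have rank $n-1$; and (ii) re-proving the ``$\ue\frG$ is a bounded morphic image of an ultrapower of $\frG$'' lemma with its forth- and back-conditions phrased for an $n$-ary relation. Both are routine in spirit, but must be carried out for genuine relations rather than for a single binary accessibility relation.
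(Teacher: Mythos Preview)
The paper does not supply its own proof of this theorem: it is stated as a citation of the classical Goldblatt--Thomason result and used as a black box (the only theorem the paper actually proves in this vicinity is the hybrid analogue, Theorem~\ref{th:Goldblatt:Thomason:Hybrid}). So there is nothing to compare your argument against.

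That said, your outline is the standard algebraic proof---complex algebras, Birkhoff's \textbf{HSP} theorem, and J\'onsson--Tarski duality---as in Chapter~5 of \citep{Blackburn-et-al-ML}, and the adaptations you flag for $n$-ary operators are exactly the right ones and go through without surprises. The sketch is correct.
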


As mentioned in the opening paragraph of this section, the following generalization of Theorem~\ref{th:tenCate-at-definable} will be used to show non-definability of density in the extended language.

\begin{theorem}\label{th:Goldblatt:Thomason:Hybrid}
    Let $\tau$ be a modal similarity type, and $\calH_\tau(\at)$ be hybrid languages built with $\tau$. A~first-order definable class of Kripke $\tau$-frames is $\calH_\tau(\at)$-definable by arbitrary formulas iff it is closed under taking ultrafilter morphic images and generated subframes.
\end{theorem}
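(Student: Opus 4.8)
The plan is to mimic ten Cate's proof of Theorem~\ref{th:tenCate-at-definable} (the binary case, \cite[Theorem 4.3.1]{tenCate-MTFEML}), checking at each step that nothing uses binarity essentially, and to isolate the one place where the arity actually matters. The left-to-right direction is routine: one shows that $\calH_\tau(\at)$-definability is preserved under generated subframes and ultrafilter morphic images. For generated subframes this is the standard observation that truth of $\calH_\tau(\at)$-formulas is invariant under generated submodels (with the modified clause for nominals from the ``generated submodel'' definition in the excerpt); for ultrafilter morphic images one uses that ultrafilter extensions validate the same $\calH_\tau(\at)$-formulas as the original frame — the $n$-ary version of the Jónsson--Tarski / ultrafilter-extension theorem, whose proof is a coordinate-wise bookkeeping exercise over $m_R$ — together with the fact that surjective bounded morphisms which are injective on principal ultrafilters reflect validity of pure and impure hybrid formulas alike.

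The substantive direction is right-to-left. So suppose $\Klass$ is a first-order definable class of $\tau$-frames closed under generated subframes and ultrafilter morphic images; we want an $\calH_\tau(\at)$ formula (set of formulas) defining it. First I would form, for the complement, the standard Henkin-style construction: take a $\tau$-frame $\frF\notin\Klass$, expand to a model $\frM$ by choosing a valuation that names every point with a distinct nominal (working in a language with enough nominals), and consider its hybrid theory. If no single $\calH_\tau(\at)$-formula separated $\frF$ from $\Klass$, a compactness argument on the first-order correspondence language yields a model-theoretic configuration from which one extracts, via a suitable ultraproduct/saturation step, a frame $\frG\in\Klass$ together with a map into $\ue\frF$ witnessing that $\frF$ is an ultrafilter morphic image of (a generated subframe of) $\frG$; closure then forces $\frF\in\Klass$, a contradiction. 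The key technical device, exactly as in ten Cate, is that a $\kappa$-saturated elementary extension of $\frM$ has its named points behaving like principal ultrafilters and its definable subsets generating an ultrafilter structure isomorphic to a generated subframe of $\ue\frF$; the named-point injectivity is what gives the ``$|f^{-1}(\{\ult\})|=1$ on principal ultrafilters'' clause.

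The main obstacle — and the only place the generalization does real work — is verifying that the back-and-forth / bounded-morphism conditions of the relevant maps go through for an $n$-ary relation. Concretely: when one shows the natural map $f$ from the saturated model's ultrafilter structure onto $\ue\frF$ satisfies the back condition, one must, given $\Rue(f(\ult),\ult_1',\ldots,\ult_{n-1}')$, produce preimages $\ult_1,\ldots,\ult_{n-1}$ simultaneously; in the binary case this is a one-witness argument, and for general $n$ one needs the saturation to supply a \emph{tuple} of witnesses coherently, which is where $m_R$ being an $(n-1)$-ary operation (and the corresponding finite-conjunction closure of the relevant filters) must be used. I expect this to be a matter of carefully rewriting ten Cate's witnessing lemma with $n-1$ parameters rather than one, with no genuinely new idea required, but it is the step that should be written out in full rather than waved through. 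Everything else — the preservation lemmas, the compactness packaging, the final appeal to closure under the two operations — transfers verbatim.
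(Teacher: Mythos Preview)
Your proposal is correct and takes essentially the same route as the paper: both directions are handled exactly as you outline, and the paper's right-to-left argument is precisely ten Cate's construction with the $p_A$ variables for every subset $A\subseteq W$ (not only nominals for points---this is the device your sketch elides but which you would recover by ``mimicking ten Cate''), compactness to land $\bDelta_\frF$ in $\Klass$, passage to a generated subframe by the named points, then an $\omega$-saturated elementary extension $\frG^\ast$, and the map $f(v)=\{A: v\Vdash p_A\}$ into $\ue\frF$. You have also correctly located the one place arity matters: the paper's back-condition step uses $\omega$-saturation to realize, for each finite choice $A_{i,1},\ldots,A_{i,n_i}\in\ult_i$, a full tuple $v_1,\ldots,v_{\rho(\triangle)}$ with $R^\ast_\triangle(v,v_1,\ldots,v_{\rho(\triangle)})$ and $v_i\Vdash p_{A_{i,j}}$, exactly the coordinate-wise rewrite you anticipate.
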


We will proceed in stages.

\begin{lemma}\label{lem:g-submodels-for-H-at}
    Let $\tau$ be a modal similarity type. If $\frM'\gsub\frM$ and $\varphi$ is a formula of $\calH_\tau(\at)$, then for every $w\in W'$\/\textup{:} $\frM',w\Vdash\varphi$ iff $\frM,w\Vdash\varphi$.
\end{lemma}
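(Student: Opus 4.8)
The statement is the standard invariance of truth under generated submodels, lifted to the hybrid language $\calH_\tau(\at)$ with an arbitrary modal similarity type $\tau$. The plan is to argue by induction on the complexity of $\varphi$. First I would fix $\frM'\gsub\frM$, say $\frM'=\langle W',R',V'\rangle$ and $\frM=\langle W,R,V\rangle$, and fix $w\in W'$; the inductive claim to be proved is the biconditional $\frM',w\Vdash\varphi\Iff\frM,w\Vdash\varphi$ for every $w\in W'$ simultaneously, so that the hypothesis is available at all points of $W'$ when handling the connectives and operators.

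The base cases are immediate from the definition of generated submodel: for $p\in\Var$ we have $w\in V'(p)$ iff $w\in V(p)\cap W'$ iff $w\in V(p)$ (using $w\in W'$); for a nominal $i$ we have $V'(i)=V(i)$, so $w\Vdash i$ holds on the same side in both models; and $\top$ is trivial. The Boolean cases $\neg$ and $\wedge$ are routine, using that $W'$ is closed under the relevant subformulas' truth sets only at points of $W'$, which is exactly what the induction hypothesis provides. For a modal operator, say $\triangle$ of arity $n-1$ with associated $n$-ary relation $R$, one must check $\frM',w\Vdash\triangle(\varphi_1,\dots,\varphi_{n-1})$ iff $\frM,w\Vdash\triangle(\varphi_1,\dots,\varphi_{n-1})$. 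The forward direction uses $R'\subseteq R$ and the induction hypothesis in the easy direction. The backward direction is the place where the generated-subframe condition is essential: if $R(w,x_1,\dots,x_{n-1})$ with $w\in W'$, then the closure clause of a generated subframe gives $x_1,\dots,x_{n-1}\in W'$, whence $R'(w,x_1,\dots,x_{n-1})$ (since $R'=R\cap (W')^n$), and the induction hypothesis applies to each $\varphi_k$ at the point $x_k\in W'$. The satisfaction operator case $\at_i\varphi$ is handled by recalling that $\frM,w\Vdash\at_i\varphi$ iff $\frM,V(i)\Vdash\varphi$, that $\frM',w\Vdash\at_i\varphi$ iff $\frM',V'(i)\Vdash\varphi$, and that $V'(i)=V(i)$ by clause (2) of the definition of generated submodel together with $V[\Nom]\subseteq W'$, which guarantees $V(i)\in W'$ so that the induction hypothesis is applicable at that point.

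The only subtlety worth flagging — the ``main obstacle'', such as it is — is that in the $\at_i$ case the evaluation jumps to the point $V(i)$, which need not be $R'$-accessible from $w$; this is precisely why the definition of generated submodel for $\calH_\tau(\at)$ was strengthened to require $V[\Nom]\subseteq W'$ (the remark preceding the lemma in the excerpt). With that clause in hand, the jump lands inside $W'$ and the induction goes through. Everything else is bookkeeping over the grammar of $\calH_\tau(\at)$, and the argument is uniform in $\tau$ since the modal clause was stated for an operator of arbitrary arity.
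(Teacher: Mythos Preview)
Your proposal is correct and follows exactly the approach the paper intends: the paper's own proof is the single sentence ``The inductive proof is a combination of the well-known results for the standard modal language and for nominals and $\at$ operator,'' and what you have written is precisely a careful unpacking of that induction, with the key observation that the clause $V[\Nom]\subseteq W'$ is what makes the $\at_i$ case go through.
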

\begin{proof}
    The inductive proof is a combination of the well-known results for the standard modal language and for nominals and $\at$ operator.
\end{proof}

\begin{lemma}\label{lem:g-subframe:for:H:at}
    Let $\tau$ be a modal similarity type. If $\frF'\gsub\frF$ and $\varphi$ is a formula of $\calH_\tau(\at)$, then $\frF\Vdash\varphi$ entails $\frF'\Vdash\varphi$. So, $\calH_\tau(\at)$-validity is preserved under taking generated subframes.
\end{lemma}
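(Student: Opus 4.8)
The plan is to deduce frame validity on $\frF'$ from frame validity on $\frF$ by exhibiting, for each valuation on $\frF'$, a generated submodel situation to which Lemma~\ref{lem:g-submodels-for-H-at} applies. So first I would fix an arbitrary valuation $V'$ on $\frF'$ and an arbitrary $w\in W'$; the aim is to show $\langle\frF',V'\rangle,w\Vdash\varphi$. I would lift $V'$ to a valuation $V$ on $\frF$ by setting $V(p)\defeq V'(p)$ for $p\in\Var$ and $V(i)\defeq V'(i)$ for $i\in\Nom$, noting that since $W'\subseteq W$ these sets live in $\frF$ as well, and that $V$ is a legitimate valuation because each $V'(i)$ is a singleton of $W'$, hence a singleton of $W$.

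Next I would check that $\frM'\defeq\langle\frF',V'\rangle$ is a generated submodel of $\frM\defeq\langle\frF,V\rangle$ in the modified sense: $\langle W',R'\rangle\gsub\langle W,R\rangle$ holds by hypothesis; $V'(p)=V(p)\cap W'$ because $V(p)=V'(p)\subseteq W'$; and $V[\Nom]=V'[\Nom]\subseteq W'$ with $V'(i)=V(i)$ for every nominal $i$, which is exactly clause (2) of the definition. Then Lemma~\ref{lem:g-submodels-for-H-at}, applied to $\frM'\gsub\frM$ and the $\calH_\tau(\at)$-formula $\varphi$, gives $\frM',u\Vdash\varphi$ iff $\frM,u\Vdash\varphi$ for every $u\in W'$. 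Since $\frF\Vdash\varphi$, the right-hand side holds for all $u\in W$, in particular for our $w$; hence $\frM',w\Vdash\varphi$. As $V'$ and $w$ were arbitrary, $\frF'\Vdash\varphi$, and the closing sentence of the statement is merely a reformulation of this.

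The one point requiring care — and precisely the reason the notion of generated submodel was adjusted for $\calH_\tau(\at)$ — is the handling of nominals: one must ensure the lifted valuation keeps every nominal's denotation inside $W'$, so that clause (2) is satisfied and Lemma~\ref{lem:g-submodels-for-H-at} is available. With the valuation chosen as above this is immediate, and everything else in the argument is routine bookkeeping.
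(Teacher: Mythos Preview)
Your proposal is correct and follows essentially the same argument as the paper: lift the valuation from $\frF'$ to $\frF$ unchanged, verify that this makes $\frM'$ a generated submodel of $\frM$ in the adjusted sense, and invoke Lemma~\ref{lem:g-submodels-for-H-at}. The only cosmetic difference is that the paper argues by contraposition while you argue directly, and you spell out the verification of the generated-submodel clauses more explicitly; neither of these is a substantive deviation.
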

\begin{proof}
    Assume that $\frF'\nVdash\varphi$, i.e., there are a model $\frM'$ and a world $s\in W'$ such that $\frM',s\nVdash\varphi$. Consider $V\colon\Var\cup\Nom\to2^W$ such that
    \begin{enumerate}[label=(\arabic*),itemsep=0pt]
        \item Put $V(p)\defeq V'(p)$. Since $V(p)\subseteq W'$, we clearly have $V'(p)=V(P)\cap W$.
        \item For every $i\in\Nom$, $V(i)=V'(i)$.
    \end{enumerate}
Thus $\frM'\gsub\frM$ and by Lemma~\ref{lem:g-submodels-for-H-at}, $\frM,s\nVdash\varphi$, so $\frF\nVdash\varphi$.
\end{proof}

Since the satisfaction operators can be characterized by $\Ex$ in the following way
\[
\at_ip\iffdef \Ex(i\wedge p)
\]
the proof of the next lemma can be limited to considering just $\calH_{\tau}(\Ex)$-formulas. We are going to omit the proof anyway, as it would be a mere repetition of the proof of \citep[Proposition 4.2.6]{tenCate-MTFEML}, with a straightforward adaptation of $\calH(\Ex)$-bisimulations to arbitrary modal similarity types.

\begin{lemma}\label{lem:ultra:image:HE}
    Let $\tau$ be a modal similarity type. $\calH_\tau(\at)$- and $\calH_\tau(\Ex)$-validity are preserved under taking ultrafilter morphic images.
\end{lemma}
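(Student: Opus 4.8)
\emph{Reduction to $\calH_\tau(\Ex)$.} The plan is to reduce to the language with $\Ex$ and then transport a putative counterexample on $\frG$ through its ultrafilter extension and back along the witnessing morphism. For the reduction, note that $\at_i\varphi$ and $\Ex(i\wedge\varphi)$ have the same extension in every model, so each $\calH_\tau(\at)$-formula is globally equivalent to a $\calH_\tau(\Ex)$-formula; hence a frame validates the former iff it validates the latter, and it suffices to prove that $\calH_\tau(\Ex)$-validity is preserved under ultrafilter bounded morphic images.

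\emph{Two truth lemmas.} First I would record, by straightforward inductions on $\calH_\tau(\Ex)$-formulas, the arity-agnostic analogues of two classical facts. (i) If $g\colon\frM_1\to\frM_2$ is a \emph{surjective} bounded morphism of $\tau$-models with $g^{-1}(V_2(p))=V_1(p)$ for all $p\in\Var$ and $g^{-1}(V_2(i))=V_1(i)$ for all $i\in\Nom$, then $\frM_1,w\Vdash\psi$ iff $\frM_2,g(w)\Vdash\psi$ for every world $w$; here surjectivity enters only in the $\Ex$-clause, the nominal clause works because $V_1(i)$ and $V_2(i)$ are singletons matched by $g^{-1}$, and the clauses for the operators of $\tau$ are the standard forth/back arguments written with $(n-1)$-tuples. (ii) For a hybrid model $\frM=\langle\frG,V\rangle$ with domain $W$, put $\hat V(q)\defeq\{\ult\in\Ult(W)\mid V(q)\in\ult\}$ for $q\in\Var\cup\Nom$; since the only ultrafilter containing the singleton $V(i)$ is the principal one $\hat w$ at $w=V(i)$, each $\hat V(i)$ is again a singleton, so $\ue\frM\defeq\langle\ue\frG,\hat V\rangle$ is a hybrid model in which \emph{every nominal denotes a principal ultrafilter}. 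One then proves $\{\ult\mid\ue\frM,\ult\Vdash\psi\}=\{\ult\mid\{w\mid\frM,w\Vdash\psi\}\in\ult\}$ for all $\psi$ --- the $\Ex$-clause using that a set belongs to some ultrafilter iff it is nonempty, and the operator clause being the Jónsson--Tarski-style identity $m_{R^{\ue}}(\hat X_1,\ldots,\hat X_{n-1})=\{\ult\mid m_R(X_1,\ldots,X_{n-1})\in\ult\}$ --- and, restricting to principal ultrafilters, $\frM,w\Vdash\psi$ iff $\ue\frM,\hat w\Vdash\psi$ for every $w\in W$.

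\emph{Assembling the argument.} For the main argument, suppose $\frG$ is an ultrafilter bounded morphic image of $\frF$, witnessed by a surjective bounded morphism $f\colon\frF\to\ue\frG$ with $|f^{-1}(\{\ult\})|=1$ for every principal $\ult$, and suppose $\frF\Vdash\varphi$ with $\varphi\in\calH_\tau(\Ex)$. If $\frG\nVdash\varphi$, choose a model $\frM=\langle\frG,V\rangle$ and a world $x$ with $\frM,x\nVdash\varphi$; by (ii), $\ue\frM,\hat x\nVdash\varphi$, and in $\ue\frM$ every nominal is interpreted by a principal ultrafilter. Define a valuation on $\frF$ by $V^{f}(q)\defeq f^{-1}(\hat V(q))$ for $q\in\Var\cup\Nom$. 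For a nominal $i$, $\hat V(i)$ is a singleton consisting of a principal ultrafilter, so $V^{f}(i)$ is a singleton --- nonempty by surjectivity, of size one by the defining property of $f$; hence $V^{f}$ is a legitimate hybrid valuation and $f$ is a surjective bounded morphism from $\langle\frF,V^{f}\rangle$ onto $\ue\frM$ matching the valuations of all propositional letters and nominals. By (i), picking $w$ with $f(w)=\hat x$ we get $\langle\frF,V^{f}\rangle,w\nVdash\varphi$, contradicting $\frF\Vdash\varphi$. So $\frG\Vdash\varphi$, and by the reduction step the same holds throughout $\calH_\tau(\at)$. (Equivalently, one may package ``$f$ followed by the principal-ultrafilter embedding'' as an $\calH_\tau(\Ex)$-bisimulation and invoke ten Cate's transfer theorem.)

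\emph{Where the difficulty lies.} The single delicate point --- and the reason the hypothesis $|f^{-1}(\{\ult\})|=1$ is imposed only for principal $\ult$ --- is that one cannot pull an \emph{arbitrary} hybrid valuation on $\ue\frG$ back along $f$, since a nominal there might name a non-principal ultrafilter with a non-singleton $f$-preimage. The way out is that the valuations relevant to $\frG\nVdash\varphi$ arise from models \emph{on $\frG$}, and routing them through the ultrafilter extension pins every nominal to a principal ultrafilter --- exactly the ultrafilters on which $f$ is assumed injective. Everything else is the mechanical re-run, for relations of arbitrary arity, of the standard bounded-morphism and ultrafilter-extension lemmas, the operator clause of (ii) being the most computation-heavy ingredient.
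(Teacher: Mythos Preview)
Your proof is correct and is exactly the kind of argument the paper gestures at: the paper omits the proof entirely, saying only that it is a repetition of ten Cate's Proposition~4.2.6 with $\calH(\Ex)$-bisimulations adapted to arbitrary modal similarity types, and your detailed unpacking---including the closing remark that the whole construction can be repackaged as an $\calH_\tau(\Ex)$-bisimulation---is precisely such an adaptation.
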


We are now ready to prove Theorem \ref{th:Goldblatt:Thomason:Hybrid}.
The left-to-right direction is easy. From lemmas \ref{lem:g-subframe:for:H:at} and \ref{lem:ultra:image:HE}, if an elementary class $\Klass$ of Kripke $\tau$-frames is $\calH_\tau(\at)$-definable, then $\Klass$ is closed under taking ultrafilter morphic images and generated subframes.

\smallskip

For the other direction, we have to get our hands dirty. In the proof, we follow the strategy of \cite{tenCate-MTFEML} adapting the techniques to an arbitrary modal type~$\tau$. Let us begin with fixing an elementary class $\Klass$ of $\tau$-frames that is closed under taking ultrafilter morphic images and generated subframes. Let $\Th(\Klass)$ be the set of $\calH_\tau(\at)$-formulas valid in $\Klass$ (i.e., valid in every frame in $\Klass$). Suppose that $\frF$ is a $\tau$-frame with domain $W$ such that $\frF\Vdash \Th(\Klass)$. We will show that $\frF\in\Klass$.

To this end, we specify the following propositional variables and nominals:
\begin{enumerate}[label=(\roman*),itemsep=0pt]
\item With each $A\subseteq W$---$W$ being the domain of $\frF$---we associate a propositional variable $p_A$.
\item For every $w\in W$, we introduce a nominal $i_w$.
\end{enumerate}

We define $\bDelta$ to be the set of the following formulas for all $A\subseteq W$, $w\in W$ and $\triangle\in\tau$ ($\rho(\triangle)$ is the arity of $\triangle$):
\begin{align*}
p_{W\setminus A}&{}\iff\neg p_A\,,\\
p_{A\cap B}&{}\iff p_A\land p_B\,,\\
p_{m_{\triangle}(A_1,\ldots,A_{\rho(\triangle)})}&{}\iff\triangle(p_{A_1},\ldots,p_{A_{\rho(\triangle)}})\,,\\
i_{w}&{}\iff p_{\{w\}}\,.
\end{align*}
Until now, all our steps are mere repetitions of the steps of ten Cate's proof. The key difference appears below, as we consider modalities of arbitrary types.

For any possibility operator $\triangle\in\tau$ of arity $\rho(\triangle)$ and for any $i\leqslant\rho(\triangle)$ we define a unary necessity operator
\[
\Box_{i}^{\triangle}\varphi\defeq\neg\triangle(\top,\ldots,\top,\neg\varphi,\top,\ldots,\top)\,,
\]
where $\neg\varphi$ is put at the $i$-th argument place. Thus, every $\triangle\in\tau$ gives rise to $\rho(\triangle)$-many unary necessity operators that reach exactly one node (accessible from the current world) irrespective of the coordinate of the operator. Let
\[
B\defeq\left\{\Box_{i}^{\triangle}\suchthat \triangle\in\tau\  \text{and}\ i\leqslant\rho(\triangle)\right\}\,.
\]
With this we define
\[
\begin{split}
\bDelta_{\frF}\defeq\{\at_{i_v}\delta\mid&{}v\in W,\delta\in\bDelta\}\cup{}\\&\{\at_{i_v}f_1\ldots f_n\delta\mid v\in W, \delta\in\bDelta, f_1,\ldots,f_n\in B\mbox{ and }n\in\omega\}\,,
\end{split}
\]
By the natural valuation $N$ that sends $p_A$ to $A$ and $i_w$ to $\{w\}$, $\bDelta_{\frF}$ is satisfiable on $\frF$ at any point. It follows from the fact that in the model $\langle\frF,N\rangle$, every formula from $\bDelta$ is globally true. So for any formula from $\bDelta_{\frF}$, from whichever world $w$ we start, following the path determined by the sequence of unary boxes, at the end, we reach a world in which $\delta$ must be satisfied (all the $\delta$'s are globally true for the natural valuation).

Thus, $\bDelta_{\frF}$ is satisfiable on $\frF$, yet we do not know if $\frF$ is in $\Klass$. However, we can prove that

\begin{lemma}
$\bDelta_{\frF}$ is satisfiable on $\Klass$.\footnote{This is a generalization of \citep[Claim 1, p.\,56]{tenCate-MTFEML}}.
\end{lemma}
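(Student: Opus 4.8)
The plan is to show that $\bDelta_{\frF}$ is finitely satisfiable on $\Klass$ and then invoke compactness. Concretely, I would fix a finite subset $\Sigma\subseteq\bDelta_{\frF}$. Every formula in $\Sigma$ has the shape $\at_{i_v}f_1\cdots f_n\delta$ with $\delta\in\bDelta$, $v\in W$, and $f_1,\ldots,f_n\in B$ (with the case $n=0$ included). Let $W_0\subseteq W$ be the finite set of worlds that occur either as some $v$ in a formula of $\Sigma$ or inside some subscript/superscript of an operator or nominal mentioned in $\Sigma$, and let $\frF_0$ be the subframe of $\frF$ generated by $W_0$ — this is still potentially infinite, so instead I would pass to the \emph{finite} subframe obtained by closing $W_0$ under the finitely many ``one step along an $f_i$'' moves dictated by the sequences appearing in $\Sigma$; call this finite set $W_1$ and consider the induced finite subframe together with the natural valuation restricted appropriately. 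The point is that $\Sigma$ only ``looks at'' finitely many worlds and finitely many relational steps, so its satisfaction is a local matter.

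Next I would reformulate the satisfiability of $\Sigma$ as a first-order property. Since the operators in $B$ are the unary ``reach exactly one accessible node'' boxes derived from $\tau$, and since the $\delta\in\bDelta$ are Boolean/relational sentences about the designated sets $p_A$, the statement ``$\bigwedge\Sigma$ holds at every point under the natural valuation'' translates into a first-order sentence $\sigma_\Sigma$ in the language of $\tau$-frames (quantifying over the finitely many relevant points, using the relations of $\tau$ to express the box-steps, and using the fixed finite Boolean data to express which $\delta$'s must hold where). Because $\Klass$ is an \emph{elementary} class, to show that $\sigma_\Sigma$ is consistent with $\Th_{\mathrm{FO}}(\Klass)$ — equivalently, that some frame in $\Klass$ realizes $\Sigma$ — it suffices to produce one $\tau$-frame $\frG\in\Klass$ together with a valuation witnessing $\Sigma$.

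Here is where I would use the closure hypotheses. Take the generated subframe $\frF'\gsub\frF$ generated by the finite set $W_1$ (so $\frF'$ carries exactly the relational information $\Sigma$ can probe, and $\bDelta_{\frF}$ restricted to $\frF'$ is still satisfied by the natural valuation, by Lemma~\ref{lem:g-submodels-for-H-at}). Now $\frF'$ need not lie in $\Klass$, but $\frF'\Vdash\Th(\Klass)$? — no, we cannot assume that directly; what we do know is $\frF\Vdash\Th(\Klass)$. So instead I would argue as ten Cate does: from $\frF\Vdash\Th(\Klass)$ and the fact that $\Klass$ is elementary and closed under generated subframes and ultrafilter morphic images, one shows that every $\calH_\tau(\at)$-formula \emph{refutable} on some frame in $\Klass$ is already refutable on $\frF$ via a suitable generated subframe; dually, the finite ``diagram-like'' description encoded by $\Sigma$, which $\frF$ satisfies, must be satisfiable on $\Klass$ because otherwise its negation would be an $\calH_\tau(\at)$-consequence valid throughout $\Klass$ — hence in $\Th(\Klass)$ — hence valid on $\frF$, contradicting that $\frF$ satisfies $\Sigma$. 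Making this dual step precise (writing the negation of the relevant part of $\Sigma$ as an actual $\calH_\tau(\at)$-formula valid on $\Klass$) is the crux.

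The main obstacle, then, is exactly that translation step: showing that ``$\Sigma$ not satisfiable on $\Klass$'' can be witnessed by a single $\calH_\tau(\at)$-formula in $\Th(\Klass)$ whose failure at the relevant point of $\frF$ follows from $\frF\nVdash$ that formula. This requires care because $\Sigma$ is about a \emph{fixed} valuation $N$ (sending $p_A\mapsto A$), whereas $\Th(\Klass)$-validity quantifies over all valuations; the resolution is that the nominals $i_v$ pin down the worlds, the identities in $\bDelta$ (the $p_{W\setminus A}\iff\neg p_A$, $p_{A\cap B}\iff p_A\wedge p_B$, $p_{m_\triangle(\ldots)}\iff\triangle(\ldots)$, $i_w\iff p_{\{w\}}$ clauses) force any valuation satisfying them to behave, on the finitely many sets in play, like a homomorphic image of $N$, and the $\at_{i_v}$-prefixes plus $B$-sequences make the whole configuration a genuine hybrid formula. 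Once this is set up, finite satisfiability on $\Klass$ follows, and the Compactness Theorem for $\calH_\tau(\at)$ (or just for first-order logic via the standard translation) upgrades it to satisfiability of all of $\bDelta_{\frF}$ on $\Klass$, completing the lemma.
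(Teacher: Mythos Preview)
Your core idea is right---compactness reduces to finite subsets, and then ``if $\Sigma$ were unsatisfiable on $\Klass$, its negation would land in $\Th(\Klass)$ and hence hold on $\frF$, a contradiction''---and this is exactly what the paper does. But you have buried a three-line argument under several pages of machinery that is neither needed nor used.

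Concretely: the finite world-sets $W_0,W_1$, the generated subframes $\frF_0,\frF'$, and the appeal to closure under generated subframes and ultrafilter morphic images are all irrelevant to this lemma. Those closure hypotheses are used \emph{later} in the proof of Theorem~\ref{th:Goldblatt:Thomason:Hybrid}, not here. Here one only needs (a) that $\Klass$ is elementary, for the compactness step, and (b) that $\frF\Vdash\Th(\Klass)$.

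You also misidentify the ``main obstacle''. There is no difficulty in passing from ``$\Sigma$ unsatisfiable on $\Klass$'' to a single formula in $\Th(\Klass)$: let $\kappa\defeq\bigwedge\Sigma$. This is already an $\calH_\tau(\at)$-formula. If $\kappa$ is satisfiable on no frame in $\Klass$ under any valuation, then $\neg\kappa$ is valid on every frame in $\Klass$, i.e.\ $\neg\kappa\in\Th(\Klass)$. Since $\frF\Vdash\Th(\Klass)$, we get $\frF\Vdash\neg\kappa$, hence in particular $\langle\frF,N\rangle\Vdash\neg\kappa$ globally. But the natural valuation $N$ makes every element of $\bDelta_{\frF}$ globally true on $\frF$, so $\langle\frF,N\rangle\Vdash\kappa$. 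Contradiction. Your worry about ``fixed valuation $N$ versus all valuations'' dissolves once you run the contrapositive this way: frame validity of $\neg\kappa$ means it holds for \emph{every} valuation, so in particular for $N$. No argument about nominals pinning down worlds or $\bDelta$ forcing the valuation is needed.

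In short: drop everything except the compactness reduction and the two-line contrapositive above, and you have the paper's proof.
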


\begin{proof}
Since $\Klass$ is elementary and each formula in $\bDelta_\frF$ has a first-order counterpart by the standard translation (we speak about satisfiability, not about validity, so we stay on the first-order level), we can reduce the argument that $\bDelta_{\frF}$ has a model in $\Klass$ to the standard model-theoretic argument from compactness for first-order structures. Therefore, it suffices to show that every finite conjunction $\kappa$ of elements of $\bDelta_{\frF}$ is satisfiable on $\Klass$. Take the natural valuation $N$ from the paragraph above. Since by the assumption $\frF\Vdash\Th(\Klass)$, we obtain that $\langle\frF,N\rangle\Vdash \Th(\Klass)\cup\{\kappa\}$. Therefore $\neg\kappa\notin \Th(\Klass)$, so $\kappa$ is satisfiable on~$\Klass$.
\end{proof}

Thus, there is a $\frG\in\Klass$ such that $\bDelta_{\frF}$ is globally true at $\langle\frG,V\rangle$ for some $V$ on $\frG$ (globally since all formulas in $\bDelta_{\frF}$ begin with $\at_i$ operators). Since $\Klass$ is closed under taking generated subframes, the frame $\frG'$ generated by the set $\bigcup V[\Omega]$ of points in $\frG$ named by a nominal is in $\Klass$. Therefore, the domain $W'$ of $\frG'$ consists of all the worlds that are accessible from named worlds via a~finite number of $R_{\triangle}$-steps (for each $\triangle\in\tau$) or just by ``jumps'' via $\at_i$-operators. Clearly, $\bDelta_{\frF}$ is globally true at $\langle\frG',V'\rangle$ where $V'$ is a restriction of $V$ to $W'$. Since all points of $\frG'$ are reachable from the named points, we have that $\bDelta$ is globally true in the model.

Take $\langle\frG^{\ast},V^{\ast}\rangle$ to be an $\omega$-saturated elementary extension of $\langle\frG',V'\rangle$. By elementarity, $\frG^{\ast}\in\Klass$ and
$\langle\frG^{\ast},V^{\ast}\rangle$ globally satisfies $\bDelta$.

\begin{lemma}
$\frF$ is an ultrafilter bounded morphic image of $\frG^*$.\footnote{This is a generalization of \citep[Claim 2, p.\,56]{tenCate-MTFEML}.}
\end{lemma}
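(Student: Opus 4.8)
The plan is to construct the required map $f\colon\frGast\to\ue\frF$ explicitly and then check the three properties (surjective, bounded morphism, injective on principal ultrafilters) one by one. I would take
\[
f(w)\defeq\{A\subseteq W\mid\frGast,w\Vdash p_A\}\,,
\]
with the valuation $\Vast$ understood. The first task is to see that $f$ is well defined, i.e.\ that each $f(w)$ is a (proper) ultrafilter on $W$. Since $\bDelta$ is globally true in $\langle\frGast,\Vast\rangle$, the clauses $p_{W\setminus A}\iff\neg p_A$ and $p_{A\cap B}\iff p_A\land p_B$ (together with $A\cap B=A$ when $A\subseteq B$) force $f(w)$ to contain $W$, omit $\emptyset$, be closed under finite intersection, be upward closed, and decide every subset of $W$. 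This part is routine bookkeeping.

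Next I would verify that $f$ is a bounded morphism onto $\ue\frF$. For the \textbf{forth} condition, given $R_{\triangle}(w_0,w_1,\ldots,w_{\rho(\triangle)})$ in $\frGast$ and $A_j\in f(w_j)$ for $1\leqslant j\leqslant\rho(\triangle)$, the tuple $w_1,\ldots,w_{\rho(\triangle)}$ witnesses $w_0\Vdash\triangle(p_{A_1},\ldots,p_{A_{\rho(\triangle)}})$ by the semantics of $\triangle$, hence $w_0\Vdash p_{m_{\triangle}(A_1,\ldots,A_{\rho(\triangle)})}$ by the matching clause of $\bDelta$; as the $A_j$ were arbitrary this is exactly $m_{\triangle}[f(w_1)\times\cdots\times f(w_{\rho(\triangle)})]\subseteq f(w_0)$, i.e.\ $R^{\ue}_{\triangle}(f(w_0),f(w_1),\ldots,f(w_{\rho(\triangle)}))$. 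For the \textbf{back} condition, assuming $R^{\ue}_{\triangle}(f(w_0),\ult_1,\ldots,\ult_{\rho(\triangle)})$ I would pass to the first-order language of $\langle\frGast,\Vast\rangle$ and consider the type in the variables $x_1,\ldots,x_{\rho(\triangle)}$ over the parameter $w_0$
\[
\Sigma\defeq\{R_{\triangle}(w_0,x_1,\ldots,x_{\rho(\triangle)})\}\cup\{p_A(x_j)\mid 1\leqslant j\leqslant\rho(\triangle),\ A\in\ult_j\}\,.
\]
A finite fragment of $\Sigma$ mentions, for each $j$, only finitely many sets, whose intersection $A_j$ still lies in $\ult_j$ (set $A_j\defeq W$ if none occurs); since $m_{\triangle}(A_1,\ldots,A_{\rho(\triangle)})\in f(w_0)$, the $\bDelta$-clause gives $w_0\Vdash\triangle(p_{A_1},\ldots,p_{A_{\rho(\triangle)}})$, and the semantics of $\triangle$ supplies witnesses $v_1,\ldots,v_{\rho(\triangle)}\in W^{\ast}$ realizing that finite fragment. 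Thus $\Sigma$ is finitely satisfiable, so by $\omega$-saturation it is realized by genuine worlds $w_1,\ldots,w_{\rho(\triangle)}$; maximality of ultrafilters upgrades $\ult_j\subseteq f(w_j)$ to $\ult_j=f(w_j)$, which is what the back condition demands.

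For \textbf{surjectivity} I would argue by saturation once more: given $\ult\in\Ult(W)$, the type $\{p_A(x)\mid A\in\ult\}$ is finitely satisfiable, because a finite conjunction collapses to $p_A(x)$ for a single nonempty $A\in\ult$, and any $v\in A$ yields a point $\Vast(i_v)$ with $\Vast(i_v)\Vdash p_{\{v\}}$ (from $i_v\iff p_{\{v\}}\in\bDelta$ and the semantics of nominals), hence $\Vast(i_v)\Vdash p_A$ by upward closure; realizing the type by some $w$ gives $\ult\subseteq f(w)$, whence $\ult=f(w)$. Finally, \textbf{injectivity on principal ultrafilters} is immediate: if $f(w)$ is the principal ultrafilter at $v$, then $\{v\}\in f(w)$, so $w\Vdash p_{\{v\}}$, so $w\Vdash i_v$ by the $\bDelta$-clause, and since $i_v$ is a nominal $w=\Vast(i_v)$; hence $|f^{-1}(\{\ult\})|=1$.

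The step I expect to be the main obstacle is the back condition. In ten Cate's binary case one only has to produce a single successor; here, for a modality $\triangle$ of arity $\rho(\triangle)$, all $\rho(\triangle)$ successors must be produced simultaneously, so the right move is to phrase the requirement as one multi-variable type and then to observe that the hypothesis $R^{\ue}_{\triangle}(f(w_0),\ult_1,\ldots,\ult_{\rho(\triangle)})$—decoded through the clause $p_{m_{\triangle}(A_1,\ldots,A_{\rho(\triangle)})}\iff\triangle(p_{A_1},\ldots,p_{A_{\rho(\triangle)}})$ of $\bDelta$—is precisely what makes every finite approximation of that type satisfiable, so that $\omega$-saturation can finish the argument.
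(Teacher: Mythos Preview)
Your proposal is correct and follows essentially the same route as the paper: the same definition of $f$, the same use of the $\bDelta$-clauses for well-definedness, the forth condition, and injectivity on principal ultrafilters, and the same appeal to $\omega$-saturation for the back condition and surjectivity. The only cosmetic difference is that the paper's surjectivity argument spells out finite satisfiability via the $\at_{i_v}$-formulas in $\bDelta_{\frF}$, whereas you shortcut through the already-established ultrafilter property of $f(w)$; both are fine.
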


\begin{proof}
For brevity, let $\frM^\ast\defeq\langle\frG^{\ast},V^{\ast}\rangle$. For any $v\in\frG^*$, let
\[
f(v)\defeq\{A\subseteq W\mid\frM^{\ast},v\Vdash p_A\}\,.
\]
We will show that $f$ is a surjective bounded morphism from $\frG^*$ onto $\mathfrak{ue}\frF$ and $|f^{-1}(\ult)|=1$ for all principal ultrafilters $\ult\in\mathfrak{ue}\frF$. We divide the proof into five steps.

\smallskip

(1st step) $f(v)$ is an ultrafilter on $\frF$. This follows from the fact that $\bDelta$ is globally satisfied in the model $\frM^{\ast}$; for every world $w$, $\frM^{\ast},w\nVdash p_\emptyset$ and $\frM^{\ast},w\Vdash p_W$; for any set $A\subseteq W$ and every world $w$, exactly one of $\frMast,w\Vdash p_A$ and $\frMast,w\Vdash \neg p_A$ holds; for any sets $A,B\subseteq W$ and every world $w$, $\frMast,w\Vdash p_{A\cap B}\leftrightarrow p_A \wedge p_B$.

\smallskip

(2nd step) $f$ is surjective. It suffices to prove that for any ultrafilter $\ult\in\ue\frF$, $\{p_A\mid A\in\ult\}$ is satisfiable on $\frMast$. For this, on the other hand, it is enough to show finite satisfiability as $\frM^\ast$ is $\omega$-saturated. To this end, observe that for finitely many $A_1,\ldots,A_n\in\ult$ we have that $\bigcap_{i=1}^n A_i\in\ult$, and so there is a $v$ in the intersection: $v\in\bigcap_{i=1}^{n}A_i$. We have that
    \[
        i_v\iff p_{\{v\}}\iff p_{\{v\}\cap\bigcap_{i=1}^{n}A_i}\iff p_{\{v\}}\wedge p_{\bigcap_{i=1}^{n}A_i}\,.
    \]
Therefore $\bDelta\Vdash i_v\rightarrow p_{\bigcap_{i=1}^{n}A_i}$. For $\bDelta_{i_{v}}\defeq\{\at_{i_{v}}\delta\mid\delta\in\bDelta\}$ it follows that $\bDelta_{i_{v}}\Vdash\at_{i_{v}}(i_{v}\rightarrow p_{\bigcap_{i=1}^{n}A_i})$. But $\at_{i_v}i_v$ is globally true, as is the formula $\at_{i_{v}}(p\rightarrow q)\rightarrow(\at_{i_{v}}p\rightarrow\at_{i_{v}}q))$. So in consequence, we get that $\bDelta_{i_{v}}\Vdash\at_{i_{v}} p_{\bigcap_{i=1}^{n}A_i}$. As $\bDelta_{i_{v}}\subseteq\bDelta_{\frF}$, we obtain $\bDelta_{\frF}\Vdash\at_{i_{v}} p_{\bigcap_{i=1}^{n}A_i}$. Then $\bDelta_{\frF}\Vdash\at_{i_v}p_{\bigcap_{i=1}^{n} A_i}$, which entails that $\frMast\Vdash\at_{i_v}p_{\bigcap_{i=1}^{n} A_i}$. In consequence, there's a $u\in\frGast$ such that for every $A\in\ult$, $\frMast,u\Vdash p_A$, and so $f(u)=\ult$, as required.\footnote{To be precise, we first translate every propositional variable $p_A$ to $P_A(x)$ (where $P_A$ is a unary predicate and $x$ is a first-order variable), and then we apply $\omega$-saturatedness to the set $\{P_A(x)\mid A\in\ult\}$.}

\smallskip

(3rd step) The forth condition for $f$ holds, i.e.,
\[
R^*_{\triangle}(v,v_1,\ldots,v_{\rho(\triangle)})\ \text{implies}\ R^{\mathfrak{ue}}_{\triangle}(f(v),f(v_1),\ldots,f(v_{\rho(\triangle)}))\,.
\]
Proving this amounts to showing that if $R^*_{\triangle}(v,v_1,\ldots,v_{\rho(\triangle)})$, then
\[
m_{\triangle}[f(v_1)\times\ldots\times f(v_{\rho(\triangle)})]\subseteq f(v)
\]
To this end, suppose that $A_1\in f(v_1),\ldots,A_{\rho(\triangle)}\in f(v_{\rho(\triangle)})$, i.e., $\frMast,v_i\Vdash p_{A_i}$, for every $i\leqslant \rho(\triangle)$. Since $R^*_{\triangle}(v,v_1,\ldots,v_{\rho(\triangle)})$, we have that $\frMast,v\Vdash\triangle(p_{A_1},\ldots,p_{A_{\rho(\triangle)}})$. As $\bDelta$ is globally true, $\frMast,v\Vdash p_{m_{\triangle}(A_1,\ldots,A_{\rho(\triangle)})}$, therefore $m_{\triangle}(A_1,\ldots,A_{\rho(\triangle)})\in f(v)$.

\smallskip

(4th step) For the back condition, we have to prove that
\[
R^{\mathfrak{ue}}(f(v),\ult_1,\ldots, \ult_{\rho(\triangle)})
\]
entails existence of $v_1,\ldots,v_{\rho(\triangle)}\in\frG^*$ such that
\[
\text{for all}\ i\leqslant\rho(\triangle)\,,\  f(v_i)=\ult_i\quad\text{and}\quad R^*_{\triangle}(v,v_1,\ldots,v_{\rho(\triangle)})\,.
\]
For this, it is enough to find $v_1,\ldots,v_{\rho(\triangle)}$ such that $R^*_{\triangle}(v,v_1,\ldots, v_{\rho(\triangle)})$ and each $v_i$ satisfies $\{p_A\mid A\in\ult_i\}$ for $i\leqslant\rho(\triangle)$. By $\omega$-saturatedness\footnote{See Definitions 2.53, 2.63 and Theorem 2.65 in \citep{Blackburn-et-al-ML}.}, it suffices to show that for any finite set $B_i\subseteq\{p_A\mid A\in\ult_i\}$, there are $v_1,\ldots,v_{\rho(\triangle)}$ such that $R^*_{\triangle}(v,v_1,\ldots, v_{\rho(\triangle)})$ and each $v_i$ satisfies $B_i$. Take any $A_{i,1},\ldots,A_{i,n_i}\in \ult_i$. Then
\[
\bigcap_{j=1}^{n_i}A_{i,j}\in\ult_i\,,\quad\text{and so}\quad \mu\defeq m_{\triangle}\left(\bigcap_{j=1}^{n_1}A_{1,j},\ldots,\bigcap_{j=1}^{n_{\rho(\triangle)}}A_{\rho(\triangle),j}\right)\in f(v)\,.
\]
By definition $\frMast,v\Vdash p_{\mu}$. By global truth of $\bDelta$, there are $v_1,\ldots,v_{\rho(\triangle)}$ such that $R^{\ast}_{\triangle}(v,v_1,\ldots, v_{\rho(\triangle)})$ and each $v_i$ satisfies $\left\{p_{A_{i,1}},\ldots,p_{A_{i,n_{i}}}\right\}$.

\smallskip

(5th step) $|f^{-1}(\ult)|=1$ for all principal ultrafilters $\ult\in\mathfrak{ue}\frF$. Suppose that $f(x)=f(y)=\{A\subseteq W\mid w\in A\}$ for some $x,y\in\frG^*$ and $w\in \frF$. By definition, $x$ and $y$ satisfy $p_{\{w\}}$. By global truth of $\bDelta$, $x$ and $y$ are both named by the nominal $i_{w}$, which implies that $x=y$.
\end{proof}
Since $\Klass$ is closed under ultrafilter morphic images, we obtain $\frF\in\Klass$. This completes the proof of the left-to-right direction, and---in consequence---of the whole Theorem~\ref{th:Goldblatt:Thomason:Hybrid}.

\subsection{The results}

We are now applying the outcomes of the previous section to show non-definability of certain classes of frames related to the betweenness axioms. For heuristic reasons, we distinguish the middle projection of $B(x,y,z)$, i.e., we interpret this as \emph{$y$ is between $x$ and $z$}. For this reason, the second point in the definition  of the bounded morphism for the particular case of $B$ takes the following form
\begin{enumerate}[label=(\arabic*),itemsep=0pt,start=2]
        \item if $B'(x',f(y),z')$, then there are $x,z\in W$ such that $f(x)=x$, $f(z)=z$ and $B(x,y,z)$.
    \end{enumerate}

\begin{theorem}\label{th:non-definability}
    For every $i\in\{1,3,4,\ldots,6\}$ the class of frames that satisfies \textup{(Bi)} is not $\calL_{\beta}$-definable. Moreover, the class of \eqref{B8}-frames is not $\Hbat$-definable.
\end{theorem}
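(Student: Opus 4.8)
The plan is to use Theorem~\ref{th:GT} for the $\calL_\beta$-part and Theorem~\ref{th:Goldblatt:Thomason:Hybrid} for the $\Hbat$-part. Since each axiom (Bi) is first-order, the corresponding class of $3$-frames is elementary, so in both cases it suffices to exhibit a closure failure: for $\calL_\beta$-non-definability we produce a frame in the class one of whose bounded morphic images (or generated subframes, or disjoint unions) falls outside, or an ultrafilter extension that is reflected incorrectly; for $\Hbat$-non-definability we produce a frame in the class with a generated subframe or an ultrafilter morphic image outside it. Because bounded morphic images are the most flexible tool and the betweenness axioms are \emph{universal} conditions that forbid certain configurations, the natural strategy throughout is: take a frame satisfying (Bi), collapse some points with a bounded morphism, and observe that the collapse creates a forbidden pattern.

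First I would handle the $\calL_\beta$ cases one by one. For \eqref{B1} (strictness $B(x,y,z)\to\#(x,y,z)$): start from a two-point frame carrying a legitimate betweenness triple and map it onto a single point; the forth condition forces a loop $B(w,w,w)$, violating \eqref{B1}. For \eqref{B3} (asymmetry on the last two coordinates): take a frame with $B(a,b,c)$ and $\neg B(a,c,b)$ and glue $b$ to $c$; the image has $B(a,\bar b,\bar b)$ only, which does not by itself break \eqref{B3}, so instead one should glue in a way that produces both $B(a,b,c)$ and $B(a,c,b)$ in the image — e.g.\ start from a frame with two separate triples $B(a,b,c)$ and $B(a',c',b')$ and collapse $a\!\sim\!a'$, $b\!\sim\!b'$, $c\!\sim\!c'$, checking the back condition holds. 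For \eqref{B4},\eqref{B5} (the transitivity axioms) and \eqref{B6} (linearity), the same recipe applies: build a small model in which the relevant hypothesis is satisfied only ``accidentally'' in the quotient. For \eqref{B6} in particular — as the footnote hints — the point is that $\#(x,y,z)$ in the image can hold for three points whose preimages collapse incompatible configurations, so no disjunct of the consequent survives; here one must be slightly careful that $\Bmod$ is a genuine modal operator and the forth/back conditions are exactly the ones displayed after the theorem statement. In each case the verification that $f$ is a bounded morphism (especially the back condition) is the only real computation, and it is routine for these tiny frames.

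For the second assertion, that the class of \eqref{B8}-frames (the density axiom $x\neq z\to\exists y\,B(x,y,z)$) is not $\Hbat$-definable, I would apply Theorem~\ref{th:Goldblatt:Thomason:Hybrid}: it is enough to find a dense-betweenness frame whose ultrafilter morphic image, or whose generated subframe, is not dense. The generated-subframe route will not work (density is inherited downward through generated subframes here, since $B$-predecessors are retained), so the obstruction must come from ultrafilter morphic images: take $\frQ=\langle\Rat,B_<\rangle$, which satisfies \eqref{B8}, and show that $\ue\frQ$ — or rather some frame of which it is an ultrafilter morphic preimage — fails \eqref{B8}, because non-principal ultrafilters ``sitting at a gap'' give a pair $\ult\neq\ult'$ with no ultrafilter strictly $B^{\ue}$-between them. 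Concretely one exhibits two non-principal ultrafilters concentrated on complementary up/down sets of a cut and checks, from the definition of $R^{\ue}$ via $m_B$, that no third ultrafilter lies between them; then $\frQ$ itself (as the principal part, with $f$ the natural map) witnesses that the class is not closed under ultrafilter morphic images in the sense required, so by Theorem~\ref{th:Goldblatt:Thomason:Hybrid} the class is not $\Hbat$-definable.

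The main obstacle I anticipate is the density case: one has to manipulate the ternary ultrafilter extension $\Rue$ directly from the definition $m_B[\ult_1\times\ult_2]\subseteq\ult$, and verifying that ``no ultrafilter is between'' two cut-ultrafilters requires choosing the witnessing sets in the ultrafilters carefully and checking the $m_B$-image computation — this is where the argument has real content, as opposed to the $\calL_\beta$ cases where the frames are finite and everything is by inspection. A secondary subtlety is making sure, for \eqref{B3} and \eqref{B6}, that the collapsing map genuinely satisfies the back condition with respect to the \emph{middle-distinguished} form of the bounded-morphism clause displayed just before the theorem, rather than the naive symmetric one.
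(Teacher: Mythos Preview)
Your overall strategy is right, and for \eqref{B1}, \eqref{B4}, \eqref{B5} the paper does essentially what you sketch: small frames collapsed by bounded morphisms. There are, however, two genuine problems.

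The serious one is \eqref{B8}. You assert that ``density is inherited downward through generated subframes'' and therefore turn to ultrafilter morphic images. This is backwards. With the middle coordinate distinguished, the generated-subframe closure condition reads: if $y\in W'$ and $B(x,y,z)$, then $x,z\in W'$. It runs \emph{from} the middle point \emph{to} the outer ones, not the other way. Density, by contrast, asks that for distinct $x,z$ there \emph{exist} a $y$ between them; nothing forces such a $y$ to lie in $W'$. The paper exploits precisely this: in $\langle [0,1]^{\Ratio},B_<\rangle$ the endpoints $0$ and $1$ are never in the middle of any triple, so $\langle\{0,1\},\emptyset\rangle$ is a generated subframe, and it blatantly fails \eqref{B8}. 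Theorem~\ref{th:Goldblatt:Thomason:Hybrid} then finishes the job in one line. Your proposed ultrafilter-extension route may be workable, but it is considerably harder, you have not carried it out, and the easy approach you dismissed is the correct one.

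Two smaller points. For \eqref{B3}, your claim that ``$B(a,\bar b,\bar b)$ does not by itself break \eqref{B3}'' is false: instantiating $y=z=\bar b$ gives $B(a,\bar b,\bar b)\to\neg B(a,\bar b,\bar b)$, so any such triple violates \eqref{B3}. Your first construction actually fails for a different reason (the back condition at $c$, which is never a middle point in the source), and your second construction has the analogous problem at $b'$. The paper instead takes the two-point frame $\{x,y\}$ with $B=\{\langle x,x,y\rangle,\langle y,y,x\rangle\}$, which does satisfy \eqref{B3}, and collapses it to a single reflexive point. For \eqref{B6} the paper does not use bounded morphisms at all but \emph{disjoint unions}: the three-point frame $\langle\{a,b,c\},\{\langle a,b,c\rangle\}\rangle$ satisfies linearity, yet in the disjoint union with a copy any triple mixing the two components has no betweenness relation whatsoever. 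This is far cleaner than trying to manufacture three pairwise-unrelated distinct points as a bounded morphic image.
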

\begin{proof}
    \eqref{B1} Consider a 3-frame $\frF\defeq\langle U,B\rangle$ such that $U\defeq\{x,y,z\}$ and $B$ consists of all proper triples of $U^3$ (i.e., those $\langle a,b,c\rangle$ for which $\#\{a,b,c\}$). Let $\frW\defeq\langle W,R\rangle$ be a 3-frame such that $W\defeq\{w\}$ and $R\defeq\{\langle w,w,w\rangle\}$. It is routine to check that the constant mapping $f\colon U\to W$ is a bounded morphism. For example, for $R(w,f(x),w)$ we have that $B(y,x,z)$ and $f(y)=w=f(z)$. $\frF$ satisfies \eqref{B1} by the definition of $B$, but the axiom fails in $\frW$.

    \smallskip

    \eqref{B3} Take $\frF\defeq\langle \{x,y\},B\rangle$ with $B\defeq\{\langle x,x,y\rangle,\langle y,y,x\rangle\}$. Let $\frF'$ be the frame $\langle\{a\},\{\langle a,a,a\rangle\}\rangle$, in which \eqref{B3} fails in obvious way. It is routine to verify that the unique mapping $f\colon \{x,y\}\to\{a\}$ is a bounded morphism.

   \smallskip

    \eqref{B4} Take any countable set $U$ and partition it into four sets: $\{x_n\mid n\in\omega\}$, $\{y_n\mid n\in\omega\}$, $\{z_n\mid n\in\omega\}$, and $\{u_n\mid n\in\omega\setminus\{0\}\}$ (we shift the numbering for the sake of a clearer presentation). Define the following relation:
    \begin{align*}
        B(a,b,c)\Iffdef &(\exists n\in\omega)\,(a=x_n\tand b=y_n\tand c=z_n)\tor\\
        &(\exists n\in\omega)\,(a=y_{n+1}\tand b=z_n\tand c=u_{n+1})\,.
    \end{align*}
    Observe that $\frF\defeq\langle U,B\rangle$ is a 3-frame in which it can never be the case that $B(a,b,c)\wedge B(b,c,d)$. Indeed, if $B(x_n,y_n,z_n)$, then $\neg B(y_n,z_n,d)$ for any~$d\in U$. Similarly, if $B(y_{n+1},z_n,u_{n+1})$, then for no $d\in U$, $B(z_n,u_{n+1},d)$. Thus $\frF$ satisfies \eqref{B4} vacuously. Now, consider $\frW\defeq\langle W,B'\rangle$ such that $W\defeq\{x,y,z,u\}$ and $B'\defeq\{\langle x,y,z\rangle,\langle y,z,u\rangle\}$. It is clear that $\frW$ does not satisfy \eqref{B4}, so what remains to be done is to define a bounded morphism $f\colon U\to W$. To this end for any $n\in\omega$, let $f(a_n)\defeq a$ for each $a\in\{x,y,z\}$ and for any $n\in\omega\setminus\{0\}$, let $f(u_n)\defeq u$. For the forth condition, if $B(x_n,y_n,z_n)$, then clearly $B'(f(x_n),f(y_n),f(z_n))$, as the latter is simply $B'(x,y,z)$; and if $B(y_{n+1},z_n,u_{n+1})$, then $B'(f(y_{n+1}),f(z_n),f(u_{n+1}))$. For the back condition, we have two possibilities. In the first, $B'(x,f(a),z)$, which means that $f(a)=y$, and so for some $n\in\omega$, $a=y_n$. But then $f(x_n)=x$ and $f(z_n)=z$ and $B(x_n,y_n,z_n)$. In the second, $B'(y,f(a),u)$ and thus $f(a)=z$, i.e., for an $n\in\omega$, $a=z_n$. So now: $f(y_{n+1})=y$, $f(u_{n+1})=z$ and $B(y_{n+1},z_n,u_{n+1})$.

        \begin{figure}
        \centering
        \begin{tikzpicture}
            \foreach \x\n in {0/2,1.5/1,3/0}
                {
                    \node[fill,circle,inner sep=2pt] (x\n) at (\x,3) {};
                    \node[above right] at (x\n) {$x_{\n}$};
                }
            \foreach \y\n in  {-1.5/3,0/2,1.5/1,3/0}
                {
                     \node[fill,circle,inner sep=2pt] (y\n) at (\y,1.5) {};
                     \node[above right] at (y\n) {$y_{\n}$};
                }
            \foreach \z\n in  {0/2,1.5/1,3/0}
                {
                     \node[fill,circle,inner sep=2pt] (z\n) at (\z,0) {};
                     \node[above right] at (z\n) {$z_{\n}$};
                }
            \foreach \u\n in  {1.5/3,3/2,4.5/1}
                {
                     \node[fill,circle,inner sep=2pt] (u\n) at (\u,-1.5) {};
                     \node[above right] at (u\n) {$u_{\n}$};
                }
            \foreach \n in {0,1,2}
                {
                \draw (x\n) -- (y\n) -- (z\n);
                }
                \draw[dashed] (y1) -- (z0) -- (u1) (y2) -- (z1) -- (u2) (y3) -- (z2) -- (u3);
                \draw[dotted,shorten >=3pt] (-3,3) -- (x2);
                \draw[dotted,shorten >=3pt] (-3,1.5) -- (y3);
                \draw[dotted,shorten >=3pt] (-3,0) -- (z2);
                \draw[dotted,shorten >=3pt] (-3,-1.5) -- (u3);

                \draw[rounded corners,gray,mygold] (-3,3.5) -- (3.75,3.5) -- (3.75,2.5) -- (-3,2.5);
                \draw[rounded corners,gray,yshift=-1.5cm,mygold] (-3,3.5) -- (3.75,3.5) -- (3.75,2.5) -- (-3,2.5);
                \draw[rounded corners,gray,yshift=-3cm,mygold] (-3,3.5) -- (3.75,3.5) -- (3.75,2.5) -- (-3,2.5);
                \draw[rounded corners,gray,yshift=-4.5cm,xshift=1.5cm,mygold] (-4.5,3.5) -- (3.75,3.5) -- (3.75,2.5) -- (-4.5,2.5);

                \foreach \a\c in  {x/3,y/1.5,z/0,u/-1.5}
                    {
                        \node[fill,circle,inner sep=2pt] (\a) at (8,\c) {};
                        \node[above left] at (\a) {$\a$};
                    }

                    \draw (8,0.75) e llipse (1cm and 3cm);

                \foreach \a\b in {3/x,1.5/y,0/z}
                    {
                        \draw [->,shorten >=2pt,myteal] (3.75,\a) -- node [midway,above] {$f$} (\b);
                    }
                \draw [->,shorten >=2pt,myteal] (5.25,-1.5) -- node [midway,above] {$f$} (u);

            \draw [mycoral] (x) -- (y) -- (z);
            \draw [dashed,mycoral] (y) to [out=315,in=45] (z) to [out=315,in=45] (u);
        \end{tikzpicture}
        \caption{A bounded morphism $f$ which shows that outer transitivity for $B$ (expressed by \eqref{B4}) is not modally definable. Betweenness relations hold between triples of points that are connected by the lines of the same style. The domain on the left is divided into four clusters. Each cluster is mapped onto precisely one point.}
        \label{fig:enter-label}
    \end{figure}
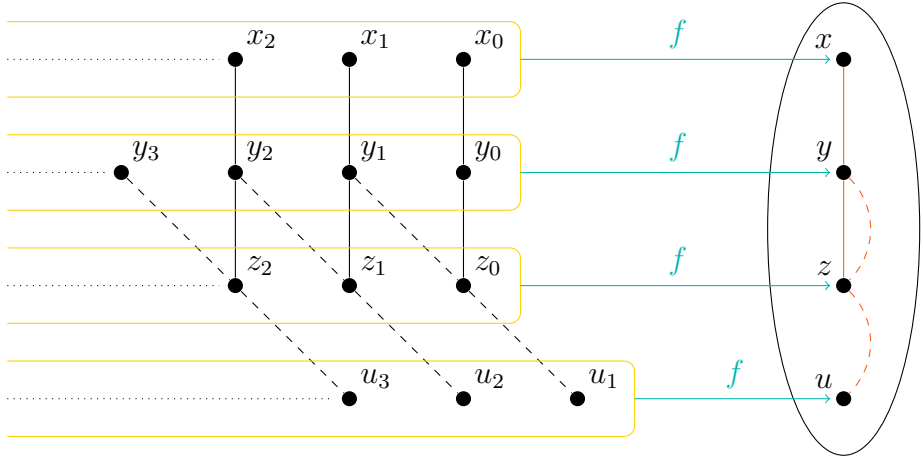

    \smallskip

    \eqref{B5} For this axiom we consider the frame $\frF\defeq\langle U,B\rangle$ with the domain $U\defeq\{x_0,y_0,u_0,z_0,z_1\}$, and $B\defeq\{\langle x_0,y_0,z_0\rangle, \langle y_0,u_0,z_1\rangle\}$; and another frame $\frW\defeq\langle W,B'\rangle$ in which $W\defeq\{x,y,z,u\}$ and $B'\defeq\{\langle x,y,z\rangle,\langle y,u,z\rangle\}$. Clearly, the first frame does not meet the antecedent of \eqref{B5}, so it satisfies the axiom vacuously, while the second frame meets the antecedent only.

    Take $f\colon U\to W$ such that $f(z_i)\defeq z$ for $i\in\{0,1\}$, and $f(x_0)\defeq x$, $f(y_0)\defeq y$ and $f(u_0)\defeq u$. The forth condition is clearly true about $f$, and for the back if $B'(a,f(w),b)$, then we have two possibilities. In the first, $a=x$, $f(w)=y$ and $b=z$, which means that $w=y_0$. So $f(x_0)=a$ and $f(z_0)=b$ and $B(x_0,w,z_0)$. In the second, $a=y$, $f(w)=u$ and $b=z$. This time, $w=u_0$, $f(y_0)=a$, $f(z_1)=b$ and $B(y_0,w,z_1)$.

    \smallskip

    \eqref{B6} For linearity we are going to use the disjoint unions method. Let us notice that \eqref{B6} considered in isolation as the linearity axiom has the following form (equivalent to \eqref{B6} in presence of \eqref{B2})
    \[
    \begin{split}
    \#(x,&y,z)\rarrow \\&B(x,y,z)\vee B(x,z,y)\vee B(y,x,z)\vee B(y,z,x)\vee B(z,x,y)\vee B(z,y,x)\,.
    \end{split}
    \]
    To show this property is not modally definable, take the following three-element frame  $\frF\defeq\langle \{a,b,c\},\{\langle a,b,c\rangle\}\rangle$, which clearly satisfies the formula above. Now, we take the disjoint copy $\frF'=\langle \{a',b',c'\},\{\langle a',b',c'\rangle\}\rangle$ of $\frF$, and the disjoint union $\frF\uplus\frF'$. The linearity axiom fails in the union, as, e.g., for $\{a,b,c'\}$, no permutation of the set is in the union of the two relations.

    \smallskip

    \eqref{B8} We will show that generated subframes do not preserve density. To this end consider the frame $\frF\defeq\langle[0,1]^{\Ratio},B_<\rangle$ where  $[0,1]^{\Ratio}$ is a closed interval of rational numbers and $B_<$ is induced by the irreflexive order $<$ on the interval. The subframe $\frF'\defeq\langle\{0,1\},\emptyset\rangle$ is generated, since neither $0$ nor $1$ are between any numbers from the interval. But $\frF'$ fails to meet density in an obvious way, so the class of dense frames is not $\Hbat$-definable by Theorem~\ref{th:Goldblatt:Thomason:Hybrid}, neither possibility definable by Theorem~\ref{th:GT}.
\end{proof}

\pagebreak

\begin{corollary}
    $\DLBWE$ is not $\calL_{\beta}$-definable.
\end{corollary}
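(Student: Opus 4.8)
The plan is to deduce this from the Goldblatt--Thomason theorem (Theorem~\ref{th:GT}). The class $\DLBWE$ is elementary: it is axiomatised by the first-order sentences \eqref{B1}--\eqref{B8}. Hence Theorem~\ref{th:GT} reduces $\calL_{\beta}$-definability (i.e.\ possibility definability) of $\DLBWE$ to closure under bounded morphic images, generated subframes, disjoint unions, and reflection of ultrafilter extensions. So it suffices to point to one of these four closure conditions that $\DLBWE$ violates. Given the linearity axiom \eqref{B6}, the natural candidate is closure under disjoint unions---this is the very obstruction already exploited for \eqref{B6} in the proof of Theorem~\ref{th:non-definability}, the only adjustment being that we must instantiate it with a frame genuinely in $\DLBWE$ rather than with the finite frame used there.

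Concretely, I would take $\frQ\defeq\langle\Rat,B_<\rangle$, which lies in $\DLBWE$, together with a disjoint isomorphic copy $\frQ'$, and consider $\frQ\uplus\frQ'$. Its betweenness relation is the union of the two component relations, so it relates only triples whose three points lie in a single component. Choosing $x\in\Rat$ and two distinct points $y,z$ in the copy $\Rat'$, we obtain $\#(x,y,z)$ while no ordering of $\{x,y,z\}$ stands in the betweenness relation; thus \eqref{B6} fails in $\frQ\uplus\frQ'$, whence $\frQ\uplus\frQ'\notin\DLBWE$. Since $\DLBWE$ is elementary but not closed under disjoint unions, Theorem~\ref{th:GT} gives that it is not $\calL_{\beta}$-definable.

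There is no genuinely hard step here; the only thing to be careful about is not to recycle verbatim the three-element frame from the \eqref{B6} case of Theorem~\ref{th:non-definability}, since that frame is not in $\DLBWE$---one needs a legitimate dense, unbounded linear betweenness frame, and $\langle\Rat,B_<\rangle$ (equivalently $\langle\Real,B_<\rangle$) serves. It is worth remarking in passing that the other obvious route---failure of closure under generated subframes, by analogy with how density was treated for $\Hbat$ in Theorem~\ref{th:non-definability}---does not work in this case: in any frame of $\DLBWE$ the no-endpoints and density axioms force every nonempty generated subframe to coincide with the whole frame, so that avenue is closed and disjoint unions is the obstruction one must use.
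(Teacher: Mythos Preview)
Your argument is correct: $\DLBWE$ is elementary, $\frQ\uplus\frQ'$ fails \eqref{B6}, and Goldblatt--Thomason then yields non-definability. The paper, however, takes a different route: it shows that $\DLBWE$ is not closed under bounded morphic images. For any $\frF\in\DLBWE$, the unique map onto the one-point frame $\langle\{x\},\{\langle x,x,x\rangle\}\rangle$ is a bounded morphism (the forth condition is trivial; the back condition uses \eqref{B7}), and this target frame fails \eqref{B1}. Both approaches are equally short and equally legitimate applications of Theorem~\ref{th:GT}; yours highlights the linearity obstruction, the paper's highlights the strictness obstruction.

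One small correction to your closing remark: you write that ``disjoint unions is the obstruction one must use,'' but as the paper's proof shows, closure under bounded morphic images fails just as readily. Your observation that generated subframes cannot help here is correct and worth keeping---in any $\DLBWE$-frame every point is strictly between some pair, and the generated-subframe condition then forces all points on either side into the subframe---but the conclusion that disjoint unions is forced does not follow.
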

\begin{proof}
    If $\frF\in\DLBWE$, then the unique mapping onto the frame \[
    \frF'\defeq\langle\{x\},\{\langle x,x,x\rangle\}\rangle
    \]
    is a bounded morphism. Thus the class is not closed for bounded morphic images.
\end{proof}

\section{Definability in extended languages}\label{sec:definability}

Let us observe that all properties of betweenness expressed by axioms \eqref{B1}--\eqref{B7} can be captured directly by means of pure formulas in an extended language.
Recall that a \emph{pure} formula is a formula that is built from operators and nominals without any propositional variables.

And so, the betweenness axioms \eqref{B1}--\eqref{B6} have the following hybrid correspondents:
\begin{gather}
\at_i\Bmod(j,k)\rarrow\neg\at_ij\wedge\neg\at_ik\wedge\neg\at_kj\,,\tag{$\mathtt{HB1}$}\label{HB1}\\
\Bmod(i,j)\rarrow\Bmod(j,i)\,,\tag{$\mathtt{HB2}$}\label{HB2}\\
\at_j\Bmod(i,k)\rarrow\neg\at_k\Bmod(i,j)\,,\tag{$\mathtt{HB3}$}\label{HB3}\\
\at_j\Bmod(i,k)\wedge\at_k\Bmod (j,l)\rarrow\at_j\Bmod (i,l)\wedge\at_k\Bmod (i,l)\,,\tag{$\mathtt{HB4}$}\label{HB4}\\
\at_j\Bmod (i,k)\wedge\at_l\Bmod (i,j)\rarrow\at_l\Bmod (i,k)\wedge \at_j\langle B\rangle (l,k)\,,\tag{$\mathtt{HB5}$}\label{HB5}\\
\neg\at_ij\wedge\neg\at_ik\wedge\neg\at_kj\rarrow \at_j\Bmod(i,k)\vee\at_k\Bmod(i,j)\vee\at_i\Bmod(j,k)\,.\tag{$\mathtt{HB6}$}\label{HB6}
\end{gather}
\eqref{B7} may be expressed by means of a modal formula
\begin{equation}
\Bmod(\top,\top)\,.\tag{$\mathtt{HB7}$}\label{HB7}
\end{equation}
Density---expressed by \eqref{B8}---can be captured directly  by means of a formula of $\Hbatex$ language:
\begin{equation}
\neg\at_ij\rarrow\Ex\Bmod(i,j)\,.\tag{$\mathtt{HB8}$}\label{HB8}
\end{equation}
And, as is well-known, \eqref{HB2} has also an equivalent modal counterpart:
\begin{equation}
\Bmod(\varphi,\psi)\rarrow\Bmod(\psi,\varphi)\,.\tag{$\mathtt{HB2_m}$}\label{MB2}
\end{equation}
The verification is routine and we leave it to the reader.

Axioms \eqref{HB1}--\eqref{HB7} are so-called pure axioms. If we add them to the basic system of hybrid logic with $\Ex$, $\at$ and nominals, then we get completeness with respect to their defining properties automatically.

Although we have shown that \eqref{B8} is not $\Hbat$-definable by showing that the class of \eqref{B8}-frames is not closed under taking generated subframes, the proof uses bounded linear frame $[0,1]^{\Ratio}$. So it is possible that the class is relatively $\Hbat$-definable with respect to the class $\LBWE$. Indeed, since \eqref{B1}--\eqref{B7} are all $\Hbat$-definable, we have that

\begin{theorem}\label{th:density-definable-abstract}
$\DLBWE$ is $\Hbat$-definable iff \eqref{B8} is relatively $\Hbat$-definable with respect to $\LBWE$.
\end{theorem}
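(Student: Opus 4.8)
The plan is to prove the biconditional directly, exploiting the fact that axioms \eqref{B1}--\eqref{B7} are already $\Hbat$-definable by the pure formulas \eqref{HB1}--\eqref{HB7}. Recall that ``$\DLBWE$ is $\Hbat$-definable'' means there is a set $\Sigma$ of $\Hbat$-formulas such that for every $3$-frame $\frF$, $\frF\Vdash\Sigma$ iff $\frF\in\DLBWE$; and ``\eqref{B8} is relatively $\Hbat$-definable with respect to $\LBWE$'' means there is a set $\Sigma'$ of $\Hbat$-formulas such that for every $\frF\in\LBWE$, $\frF\Vdash\Sigma'$ iff $\frF\in\DLBWE$.

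For the right-to-left direction, suppose \eqref{B8} is relatively $\Hbat$-definable with respect to $\LBWE$ by a set $\Sigma'$. Set $\Sigma\defeq\{\eqref{HB1},\ldots,\eqref{HB7}\}\cup\Sigma'$. If $\frF\Vdash\Sigma$, then $\frF\Vdash\eqref{HB1},\ldots,\eqref{HB7}$, so $\frF\in\LBWE$ (using the correspondences established in Section~\ref{sec:definability} together with the fact that \eqref{HB7} defines \eqref{B7}); since moreover $\frF\Vdash\Sigma'$, relative definability gives $\frF\in\DLBWE$. Conversely, if $\frF\in\DLBWE$ then $\frF\in\LBWE$, so $\frF$ validates \eqref{HB1}--\eqref{HB7}, and by relative definability $\frF\Vdash\Sigma'$; hence $\frF\Vdash\Sigma$. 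Thus $\Sigma$ defines $\DLBWE$.

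For the left-to-right direction, suppose $\DLBWE$ is $\Hbat$-definable by a set $\Sigma$. I claim the very same $\Sigma$ relatively $\Hbat$-defines \eqref{B8} with respect to $\LBWE$. Indeed, take any $\frF\in\LBWE$. If $\frF\Vdash\Sigma$, then $\frF\in\DLBWE$ directly. If $\frF\in\DLBWE$, then $\frF\Vdash\Sigma$ directly. Hence for $\frF\in\LBWE$ we have $\frF\Vdash\Sigma$ iff $\frF\in\DLBWE$, which is exactly relative $\Hbat$-definability of \eqref{B8} with respect to $\LBWE$.

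There is essentially no obstacle here: the statement is a soft bookkeeping argument once one has recorded that \eqref{HB1}--\eqref{HB7} are pure $\Hbat$-formulas defining \eqref{B1}--\eqref{B7} (already noted in Section~\ref{sec:definability}) and that $\LBWE$ is axiomatized by \eqref{B1}--\eqref{B7}. The only point requiring a line of care is that $\LBWE$ itself is $\Hbat$-definable (by the conjunction of \eqref{HB1}--\eqref{HB7}), which is what lets one combine the relative defining set with a defining set for $\LBWE$ in the right-to-left direction; this is immediate from the material already developed.
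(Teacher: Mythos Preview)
Your proof is correct and follows essentially the same approach as the paper's own argument: both directions use the defining set for $\DLBWE$ as the relative defining set for \eqref{B8} over $\LBWE$, and conversely combine a relative defining set with \eqref{HB1}--\eqref{HB7} to define $\DLBWE$. You have simply spelled out the verifications in slightly more detail.
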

\begin{proof}
If $\DLBWE$ is $\Hbat$-definable, then its defining formulas are the defining formulas for \eqref{B8} with respect to the class  $\LBWE$. For the other direction, if \eqref{B8} is $\at$-definable with respect to $\LBWE$, then since \eqref{B1}--\eqref{B7} are all $\Hbat$-definable, the relative defining formulas for \eqref{B8} together with the defining formulas for \eqref{B1}--\eqref{B7} are the defining formulas for $\DLBWE$.
\end{proof}

\section{The axiom of density with respect to \texorpdfstring{$\LBWE$}{LBWE}}\label{sec:definability-density}

Up to now, we have only shown that there exists an axiomatization of the class $\DLBWE$ by means of formulas of the language $\Hbat$. Below, we pinpoint both the defining modal formula and the defining pure hybrid formula (even without $\at$) of density with respect to the class $\LBWE$.

Let us begin with an observation that the existential modality $\Ex$ is $\calL_{\beta}$-definable relative to the class of linear betweenness orders without endpoints.

\begin{theorem}\label{th:E-elimination-in-LBWE}
    If $\frF\in\LBWE$, then $\frF\Vdash\Ex\varphi\iff\Bmod(\varphi,\top)\vee\varphi$.
\end{theorem}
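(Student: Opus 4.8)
The claim is a frame-validity statement: for every $\frF \in \LBWE$, every model $\frM$ on $\frF$, and every world $w$, we have $\frM, w \Vdash \Ex\varphi$ iff $\frM, w \Vdash \Bmod(\varphi,\top) \vee \varphi$. Since the left side does not depend on $w$ (it asks for \emph{some} world satisfying $\varphi$), the real content is: \emph{some} world satisfies $\varphi$ iff \emph{every} world satisfies $\Bmod(\varphi,\top)\vee\varphi$. The plan is to prove the two directions pointwise.

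For the right-to-left direction I would argue contrapositively: if $\frM \nVdash \Ex\varphi$, then no world satisfies $\varphi$, so in particular for any fixed $w$ we have $w \nVdash \varphi$; and $\Bmod(\varphi,\top)$ requires witnesses $x, y$ with $x \Vdash \varphi$, which cannot exist, so $w \nVdash \Bmod(\varphi,\top)$ either. Hence $w \nVdash \Bmod(\varphi,\top)\vee\varphi$, and the formula fails on $\frF$. This direction uses nothing about $\LBWE$.

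The left-to-right direction is where the betweenness axioms enter. Assume $\frM, x \Vdash \varphi$ for some $x \in W$, and let $w \in W$ be arbitrary; I must show $w \Vdash \Bmod(\varphi,\top) \vee \varphi$. If $w = x$ then $w \Vdash \varphi$ and we are done, so assume $w \neq x$. Using unboundedness \eqref{B7} applied to $w$, pick $a, c$ with $B(a, w, c)$; this already shows $w$ is \emph{somewhere} in the middle, but I need specifically a triple $B(x', w, y')$ with $x' \Vdash \varphi$. The strategy is to use linearity \eqref{B6} and unboundedness to place $w$ between $x$ and some point on the far side of $w$ from $x$. Concretely: by \eqref{B7} there are $p, q$ with $B(p, w, q)$. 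Among $x$, $p$, $q$ (assuming they are pairwise distinct — degenerate cases handled separately using the elementary properties of $B$ in the appendix), linearity gives one of the six orderings. In the orderings where $w$ is already between $x$ and $p$ (or between $x$ and $q$), we are done since that endpoint-partner can be paired with $\top$. In the remaining cases $x$ lies strictly on one side, say the $p$-side, i.e.\ we have something like $B(x, p, w)$ or $B(p, x, w)$ together with $B(p, w, q)$; then outer/inner transitivity \eqref{B4}/\eqref{B5} (using $B(x,\ldots,w)$ patterns chained with $B(\ldots,w,q)$) should yield $B(x, w, q')$ for a suitable $q'$, and since $x \Vdash \varphi$ and trivially $q' \Vdash \top$, we get $w \Vdash \Bmod(\varphi,\top)$.

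The main obstacle is the case analysis in the left-to-right direction: one must carefully enumerate the possible relative orders of $x$ and the two unboundedness-witnesses of $w$, handle the coincidence cases (when some of these points collide, or when only $x$ and one other point are available), and in each case invoke the right transitivity axiom in the right variable slot to conclude $B(x, w, y')$ for \emph{some} $y'$. I expect the cleanest route is to first pass through the associated linear order: by the correspondence between $\LBWE$ and $\LOWE$ discussed in Section~\ref{sec:betweenness}, fix an orientation $<_B$; then $B(x,w,y) \iff (x <_B w <_B y \text{ or } y <_B w <_B x)$, and the argument reduces to: in a linear order without endpoints, if $x$ exists then for any $w$ either $w = x$ or there is $y$ with $x <_B w <_B y$ or $y <_B w <_B x$ — which is immediate from totality of $<_B$ plus unboundedness on the appropriate side. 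This reformulation sidesteps the raw six-case betweenness bookkeeping and is the approach I would actually write up, relegating the translation back to $B$ to a one-line remark.
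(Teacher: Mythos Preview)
Your proposal is correct. The paper's proof takes essentially the same route as your main plan, but it is a two-line argument because the entire case analysis you sketch (apply \eqref{B7} at $w$, then split on the relative position of $x$ and the two witnesses using \eqref{B6}, then chain with \eqref{B4}/\eqref{B5}) has been factored out in advance as Lemma~\ref{lem:exists-to-one-side} in the appendix: for $w\neq x$ there is $u$ with $B(x,w,u)$. With that lemma in hand the left-to-right direction is immediate, and the right-to-left direction is dismissed as obvious, just as you observe.

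Your alternative route via the associated linear order $<_B$ is a genuinely different packaging that the paper does not take. It buys you a one-line argument (totality plus one-sided unboundedness) at the cost of invoking the $\LBWE$/$\LOWE$ correspondence, which the paper only mentions informally; the paper instead keeps everything at the level of the $B$-axioms and pays the price of the explicit case analysis in the appendix lemma. Either approach is fine, but if you write up the linear-order version you should make the translation step precise rather than leave it as a remark, since the paper does not formally prove that correspondence.
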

\begin{proof}
Suppose $\frM$ is a model based on $\frF$ and $w\in W$ is such that $\frM,w\Vdash\Ex\varphi$. This means that there is $x\in W$ such that $\frM,x\Vdash\varphi$. If $w=x$ we're done. So assume $w\neq x$ and $\frM,w\Vdash\neg\varphi$ (otherwise, we're done again). By Lemma~\ref{lem:exists-to-one-side} there is a point $u$ such that $B(x,w,u)$. So $\frM,w\Vdash\Bmod(\varphi,\top)$, and the conclusion follows.

The other direction is obvious.
\end{proof}

Thus, with respect to the class of linear b-frames without endpoints, we have the following equivalences
\[
\at_i\varphi\iff\Ex(i\wedge\varphi)\iff \Bmod(i\wedge\varphi,\top)\vee(i\wedge\varphi)\,.
\]
Therefore, with respect to this class, any property that is $\Hbat$-definable or $\Hbex$-definable must be definable by a formula with the modal operator only and possibly nominals.

In Theorem~\ref{th:density-definable-abstract} we have shown that the class $\DLBWE$ is $\Hbat$-definable relative to the class $\LBWE$. This abstract result can be made concrete by pointing to a modal axiom which is a relative defining formula for \eqref{B8}, and which is the missing piece (modulo translation) in our hybrid characterization of the class.

\begin{theorem}\label{th:relative-density}
For any frame $\frF\in\LBWE$\/\textup{:} $\frF\models\eqref{B8}$ iff $\frF\Vdash\Conv p\to
\Conv\Conv p$.\footnote{The idea for the theorem and the proof comes from \citep{Bezhanishvili-et-al-MLOB}.}
\end{theorem}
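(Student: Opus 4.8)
The plan is to prove both directions of the biconditional semantically, working with an arbitrary frame $\frF=\langle W,B\rangle\in\LBWE$. First I would unwind the meaning of $\Conv p\to\Conv\Conv p$. Recalling that $\Conv\varphi=\Bmod(\varphi,\varphi)$, a point $w$ satisfies $\Conv p$ iff there are $a,b$ with $a\Vdash p$, $b\Vdash p$ and $B(a,w,b)$; so $\Conv p$ is true exactly at the points strictly between two $p$-points (in the order-theoretic picture, at the interior points of the ``interval hull'' of the set where $p$ holds). Thus $\Conv\Conv p$ is true at $w$ iff $w$ lies strictly between two points that are themselves each strictly between two $p$-points. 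The content of the formula $\Conv p\to\Conv\Conv p$ is therefore: whenever $w$ lies strictly between two $p$-points, it already lies strictly between two $(\Conv p)$-points. In the linear picture this is a kind of ``no isolated gaps'' statement, and it should be equivalent to density.

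For the direction \textbf{(B8) $\Rightarrow$ validity}, assume $\frF\models\eqref{B8}$ and take any model $\frM$ on $\frF$, any world $w$ with $\frM,w\Vdash\Conv p$. Fix witnesses $a,b\Vdash p$ with $B(a,w,b)$. Since $\#(a,w,b)$ by \eqref{B1}, in particular $a\neq w$ and $w\neq b$. Applying \eqref{B8} to the pair $a,w$ gives a point $c$ with $B(a,c,w)$, and applying it to $w,b$ gives $d$ with $B(w,d,b)$. I would then argue, using the betweenness axioms available for $\LBWE$ (symmetry \eqref{B2}, the transitivity axioms \eqref{B4}--\eqref{B5}, and the elementary consequences collected in the appendix, e.g.\ those used in Lemma~\ref{lem:exists-to-one-side} and Lemma~\ref{lem:exists-to-one-side}-type facts), that $c\Vdash\Conv p$ and $d\Vdash\Conv p$: indeed $c$ lies between $a$ and $w$, and $w$ lies between $a$ and $b$ with $B(w,d,b)$, so $c$ lies strictly between the two $p$-points $a$ and $d$ — one needs $B(a,c,d)$, which follows by combining $B(a,c,w)$ and $B(c,w,b)$ (from $B(a,w,b)$ via inner/outer transitivity) and then $B(w,d,b)$. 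Hence $c\Vdash\Bmod(p,\Conv p)$-style membership; more carefully, to get $w\Vdash\Conv\Conv p$ I need $w$ between two $\Conv p$-points, and $c,d$ with $B(c,w,d)$ do the job once $B(c,w,d)$ is established (again from $B(a,c,w)$, $B(w,d,b)$ and $B(a,w,b)$ by the standard manipulations). So $\frM,w\Vdash\Conv\Conv p$.

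For the contrapositive of \textbf{validity $\Rightarrow$ (B8)}, suppose $\frF\in\LBWE$ but $\frF\not\models\eqref{B8}$, so there are $x\neq z$ with no point between them, i.e.\ no $y$ with $B(x,y,z)$. I would build a valuation falsifying $\Conv p\to\Conv\Conv p$: set $V(p)\defeq\{x,z\}$. Then any point $w$ with $B(x,w,z)$ would witness $\Conv p$, but there is no such $w$ by assumption — so I instead need a point satisfying $\Conv p$ that fails $\Conv\Conv p$. The natural move is to note that the only $p$-points are $x$ and $z$, so $\Conv p$ holds precisely at points strictly between $x$ and $z$; by failure of density that set is \emph{empty}, which makes $\Conv p\to\Conv\Conv p$ vacuously valid under this valuation — so the naive choice does not work, and this is the main obstacle. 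The fix (following the cited idea of Bezhanishvili et al.) is to exploit unboundedness \eqref{B7} and linearity \eqref{B6}: pick points on the far sides of $x$ and $z$. Concretely, by \eqref{B7}/\eqref{B8}-free unboundedness-type reasoning there exist $x',z'$ with $B(x',x,z)$ and $B(x,z,z')$; put $V(p)\defeq\{x',z'\}$. Then $x$ and $z$ both satisfy $\Conv p$ (each lies strictly between $x'$ and $z'$, using linearity to order the four points). Now consider $z$ (or the point $x$): it satisfies $\Conv p$, and I claim it fails $\Conv\Conv p$. For $z\Vdash\Conv\Conv p$ we would need $z$ strictly between two points $u,v$ each satisfying $\Conv p$. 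The points satisfying $\Conv p$ are exactly those strictly between $x'$ and $z'$; since there is a ``gap'' between $x$ and $z$ (no point strictly between them), the $\Conv p$-set, restricted to the relevant side of $z$, has $x$ as a closest element on one side — so any $\Conv p$-point on the $x$-side of $z$ is $\le x$ in the order, and $z$ cannot be strictly between two $\Conv p$-points straddling it, because there is nothing strictly between $x$ and $z$. Making this rigorous requires translating to the induced linear order $<_B$ (mentioned in Section~\ref{sec:betweenness}) where the argument becomes: the set $\{w : w\Vdash\Conv p\}$ is the open interval $(x',z')$, and $z\in(x',z')$, but $z\notin((x',z')\cap(\,\cdot\,,z))\text{-hull}$ precisely because $(x,z)=\emptyset$; so $z\notin\Conv((x',z'))$.

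I expect the main obstacle to be exactly this last point — choosing the right valuation and the right ``test world'' so that the failure of density is visible to $\Conv\Conv p$ rather than being masked by vacuity, and then bookkeeping the betweenness relations among the four or five points involved using only \eqref{B1}--\eqref{B7} and their catalogued consequences. I would handle this by passing to the associated linear order $<_B$, where $\Conv$ becomes the ``open interval hull'' operation and density of the frame becomes exactly the condition that the open-hull operation is idempotent on the particular set $\{x',z'\}$ — at which point the equivalence is transparent — and then translating back. The first (``forth'') direction is routine transitivity-chasing; the converse is where care is needed.
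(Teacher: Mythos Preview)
Your forward direction (\eqref{B8} $\Rightarrow$ validity) is fine and in fact more direct than the paper's: given $B(a,w,b)$ with $a,b\Vdash p$, density yields $c,d$ with $B(a,c,w)$ and $B(w,d,b)$, and a short chain of applications of \eqref{B2}, \eqref{B4}, \eqref{B5}, \eqref{B5'} gives $B(a,c,b)$, $B(a,d,b)$ and $B(c,w,d)$, so $w\Vdash\Conv\Conv p$. (Your aside that ``$c$ lies between $a$ and $d$'' is irrelevant; what you need is $B(a,c,b)$.) The paper instead argues this direction by contrapositive, extracting a gap from a point where $\Conv p\wedge\neg\Conv\Conv p$ holds; your route is shorter.

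The backward direction, however, has a genuine gap. With $V(p)=\{x',z'\}$ and $x'<x<z<z'$ (gap only between $x$ and $z$), the set $V(\Conv p)$ is the open interval $(x',z')$. Your claim that $z\nVdash\Conv\Conv p$ fails whenever $(z,z')$ is nonempty: pick any $v$ with $B(z,v,z')$; then $v\Vdash\Conv p$, also $x\Vdash\Conv p$, and $B(x,z,v)$ holds, so $z\Vdash\Conv\Conv p$. The sentence ``$z$ cannot be strictly between two $\Conv p$-points straddling it, because there is nothing strictly between $x$ and $z$'' is a non sequitur---the gap between $x$ and $z$ does not prevent $z$ from lying between $x$ and a point on the \emph{other} side. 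Since $z'$ is produced by unboundedness, you have no control over whether $(z,z')$ is empty, and in general it will not be.

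The fix is exactly what the paper does: make one of the two $p$-points an endpoint of the gap. With $B(a,x,y)$ (obtained from \eqref{B7}/\eqref{B6}) and no point between $x$ and $y$, set $V(p)=\{a,y\}$ and test at $x$. Then $V(\Conv p)$ consists of points strictly between $a$ and $y$; since $(x,y)$ is empty, every $\Conv p$-point lies on the $a$-side of $x$, so $x$ cannot be strictly between two of them. The essential idea you are missing is that one side of the test point must be completely free of $\Conv p$-points, and this is guaranteed only by placing a $p$-point at the edge of the gap itself.
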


\begin{proof}
($\Larrow$) Suppose that $\frF$ has two distinct points~$x$ and~$y$ such that no point distinct from them is located between~them. By \eqref{B7} there are points $a$ and $b$ such that $B(a,x,b)$, and by \eqref{B1}, $\#(a,x,b)$. We have three cases to consider.

\smallskip

(1st) $b=y$: in which case $B(a,x,y)$. Take the valuation $V$ such that $V(p)\defeq\{a,y\}$. Thus $x\Vdash\Conv p$ but $\Conv\Conv p$ fails at $x$. To see this, assume for a contradiction that $x\Vdash\Conv\Conv p$.  Thus, there are $u$ and $w$ such that (i) $B(u,x,w)$ and $u\Vdash\Conv p$ and $w\Vdash\Conv p$. Further, these entail existence of $u_1,u_2,w_1$ and $w_2$ such that (ii) $B(u_1,u,u_2)$ and $\{u_1,u_2\}\subseteq V(p)$ and (iii) $B(w_1,w,w_2)$ and $\{w_1,w_2\}\subseteq V(p)$. In consequence
\[
\{u_1,u_2,w_1,w_2\}\subseteq\{a,y\}\,,
\]
so there are two possibilities
\[
u_1=w_1\tand u_2=w_2\quad\tor\quad u_1=w_2\tand u_1=w_2\,.
\]
Consider the first one, additionally assuming that
\[
u_1=w_1=a\qtand u_2=w_2=y\,.
\]
Gathering the assumptions, (i), (ii), (iii), and the above, we get that
\begin{equation*}
\text{(a)}\ B(a,x,y)\tand\text{(b)}\ B(u,x,w)\tand\text{(c)}\ B(a,u,y)\tand\text{(d)}\ B(a,w,y)\,.
\end{equation*}
From (a), (d), and Lemma~\ref{lem:proj-for-B} we obtain that either $B(a,x,w)$ or $B(a,w,x)$.

In the former case, from (a) and Lemma~\ref{lem:proj-for-B} we get that $B(x,y,w)$ or $B(x,w,y)$. Since the second possibility is excluded, the first one obtains. But then, using (d) and \eqref{B4}, we have $B(a,w,x)$, which stands in contradiction to $B(a,x,w)$ and \eqref{B3}.

In the latter case, from (d) and Lemma~\ref{lem:proj-for-B}, we get that $B(w,x,y)$ or $B(w,y,x)$. If $B(w,x,y)$, then we use (b) and apply Lemma~\ref{lem:proj-for-B} to obtain that either $B(x,y,u)$ or $B(x,u,y)$. The second case cannot hold, so $B(x,y,u)$. Using symmetric consequence of (c), i.e., $B(y,u,a)$ we get that $B(x,y,a)$, so $B(a,y,x)$, which contradicts (a). If $B(w,y,x)$, then $B(x,y,w)$. From (d) we have that $B(y,w,a)$, so $B(x,y,a)$ by \eqref{B4}. Thus $B(a,y,x)$ and again we have a contradiction with~(a).

The other cases are proved in an analogous way.

\smallskip

(2nd) $a=y$: in which case $B(y,x,b)$ and we proceed with the valuation $V$ such that $V(p)\defeq\{y,b\}$. The proof is analogous to the one above.

\smallskip

(3rd) $\#(a,b,y)$: in which case also $\#(a,x,y)$ and by \eqref{B6} we have that
    \[
        B(a,x,y)\quad\tor \quad B(a,y,x)\quad\tor\quad B(x,a,y)\,.
    \]
    The third case is excluded by the assumption, and in the remaining two we either fix $V(p)\defeq\{a,y\}$ to show that $\Conv p\rarrow\Conv\Conv p$ fails at $x$ in the first case, or we put $V(p)\defeq\{a,x\}$ to show that the condition is false at $y$ in the second case.

    \smallskip

    The geometrical idea behind this part of the proof is depicted in Figure~\ref{fig:density-axiom-1}.

\begin{figure}
    \centering
\begin{tikzpicture}
  \path (0,0) coordinate (x) (1.5,0) coordinate (y) (-2,0) coordinate (w) (3.5,0) coordinate (z);
  \draw (-3,0) -- (x) (y) -- (4,0);
  \foreach \x in {x,y,w} {\fill (\x) circle (2pt);}
  \node[above] at (x) {$x$} node[above] at (y) {$y$} node[above] at (w) {$w$};
  \node[below] at (y) {$p$} node[below] at (w) {$p$};
  \node[above=10pt] at (x) {$\Conv p$};
  \draw[snake=brace,mirror snake,raise snake=15pt] (-1.8,0) -- (0,0) node[midway,below=17pt] {$\Conv p$} (1.5,0) -- (4,0) node[midway,below=17pt] {$\neg\Conv p$};
\end{tikzpicture}\caption{Failure of the modal density axiom in a non-dense frame. Since there is no point between $x$ and $y$, $x\nVdash\Conv\Conv p$}\label{fig:density-axiom-1}
\end{figure}
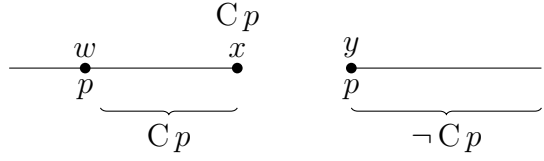

\smallskip

($\Rarrow$) Let now $x$ be a~point such that $x\Vdash\Conv p\wedge\neg\Conv\Conv p$. Since $\Conv p$ is true at $x$, there are points $y$ and $z$ such that $B(y,x,z)$ and $y,z\Vdash p$. We have two possibilities: either all points on the side of $y$ of $x$ satisfy $\neg\Conv p$, i.e.,
\[
\{u\in W\mid u=y\vee B(u,y,x)\vee B(y,u,x)\}\cap V(\Conv p)=\emptyset
\]
or all points on the other side of $x$ satisfy $\neg\Conv p$, i.e.,
\[
\{w\in W\mid w=z\vee B(x,w,z)\vee B(x,z,w)\}\cap V(\Conv p)=\emptyset\,.
\]
This can be proven by assuming that if both intersections are non-empty, then $x\Vdash\Conv\Conv p$. For example, if $u=y$ and $w=z$ and $u,w\in V(\Conv p)$, then $B(u,x,w)$ and $x\Vdash\Conv\Conv p$. If $u,w\in V(\Conv p)$ and $B(u,y,x)$ and $B(x,w,z)$, then together with $B(y,x,z)$ we have firstly that $B(y,x,w)$ by \eqref{B5}, and secondly that $B(u,x,w)$ by \eqref{B4'}. Thus $x\Vdash\Conv\Conv p$. The other seven possibilities are similar.

In the first case---when all points on the side of $y$ of $x$ satisfy $\neg\Conv p$---there cannot be any point between $y$ and $x$, since such point would be---by \eqref{B5'}---between $y$ and $z$, and therefore would have to satisfy $\Conv p$. The second case is analogous. Figure~\ref{fig:density-axiom-2} presents the idea of the proof in a geometrical way.\qedhere

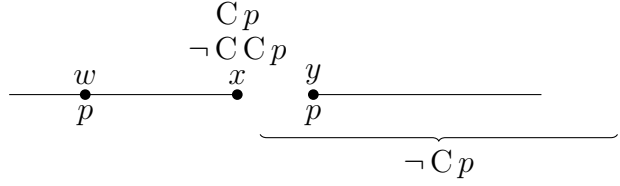
\begin{figure}
    \centering
\begin{tikzpicture}
  \path (0,0) coordinate (x) (1,0) coordinate (y) (-2,0) coordinate (z) (3.5,0) coordinate (z);
  \draw (-3,0) -- (x) (y) -- (4,0);
  \foreach \x in {x,y,w} {\fill (\x) circle (2pt);}
  \node[above] at (x) {$x$} node[above] at (y) {$y$} node[above] at (w) {$w$};
  \node[below] at (y) {$p$} node[below] at (w) {$p$};
  \node[above=20pt] at (x) {$\Conv p$} node[above=7pt] at (x) {$\neg\Conv\Conv p$};
  \draw[snake=brace,mirror snake,raise snake=15pt] (0.3,0) -- (5,0) node[midway,below=17pt] {$\neg\Conv p$};
\end{tikzpicture}\caption{Failure of the modal density axioms entails failure of the geometrical density. Here is pictured the situation in which all points on the $y$ side of $x$ fail to satisfy $\Conv p$.}\label{fig:density-axiom-2}
\end{figure}
\end{proof}

Since our intention is to apply the techniques of \cite[Section 7.3]{Blackburn-et-al-ML} to obtain completeness results, we need a system whose only non-standard modalities are satisfaction operators and whose all formulas are pure. Among the axioms \eqref{HB1}--\eqref{HB7} the only one that does not meet the criterion is \eqref{HB8}. However, in light of Theorem~\ref{th:relative-density} it suffices to find a pure formula equivalent to $\Conv p\rarrow\Conv\Conv p$, and take it as an axiom instead of \eqref{HB8}. To this end, we apply the Sahlqvist-van Benthem algorithm:\footnote{See \citep{Blackburn-et-al-ML} and \citep{tenCate-MTFEML} for details. In particular, Section 3.6 ``Sahlqvist Formulas'' of the former, and Section 3.2 ``First-order correspondence languages'' of the latter, with one caveat. Standardly, for nominals we have $ST_x(i)\defeq x = c_i$, where $c_i$ is a new individual constant. Here, as we consider all valuations for the nominals, and thus need to quantify over ``them'', we translate elements of $\Omega$ to first-order variables: $ST_x(i)\defeq x = x_i$.}

\begin{align*}
\frF\Vdash \Conv p\to \Conv\Conv p\quad&\text{iff}\quad\frF\vDash\forall P\forall x(ST_x(\Conv p)\to ST_x(\Conv\Conv p))\\
&\text{iff}\quad\frF\vDash\forall P\forall x(ST_x(\langle B\rangle(p,p))\to ST_x(\Conv\Conv p))\\
&\text{iff}\quad\frF\vDash\forall P\forall x(\exists y\exists z(B(y,x,z)\land Py\land Pz)\to ST_x(\Conv\Conv p))\\
&\text{iff}\quad\frF\vDash\forall P\forall x\forall y\forall z(B(y,x,z)\land Py\land Pz\to ST_x(\Conv\Conv p))\\
&\text{iff}\quad\frF\vDash\forall x\forall y\forall z(B(y,x,z)\to ST_x(\Conv\Conv p))[\nicefrac{\lambda u.(u=y\lor u=z)}{P}]\\
&\text{iff}\quad\frF\vDash\forall x\forall{x_i}\forall{x_j}(ST_x(\langle B\rangle(i,j))\to ST_x(\Conv\Conv(i\lor j)))\\
&\text{iff}\quad\frF\Vdash\langle B\rangle(i,j)\to \Conv\Conv(i\lor j)\,.
\end{align*}

Recall that \eqref{HB8} defines the class of dense 3-frames <<directly>>, while $\Conv p\rarrow\Conv\Conv p$ only relatively, and so does the equivalent formula
\begin{equation}\tag{$\mathtt{HB8'}$}\label{HB8'}
    \Bmod(i,j)\to \Conv\Conv(i\lor j)\,.
\end{equation}
However, we can still use it to capture a class of dense frames by means of the system with pure formulas only, as we have pure formulas that define that class $\LBWE$, the setting in which by means of Theorem~\ref{th:relative-density} formula \eqref{HB8'} corresponds to density.

\section{The hybrid logic of strict betweenness}\label{sec:hybrid-logic-of-strict-betweenness}

In this section, we propose the hybrid logic of strict betweenness for the class $\DLBWE$. The system will be a pure axiomatic extension of the basic system for the language $\Hbat$, so we get automatic completeness similar to \cite[Section 7.3]{Blackburn-et-al-ML}.

\subsection{The system \texorpdfstring{$\Bh$}{Bh}}

The hybrid logic of strict betweenness will be formulated in the language $\Hbat$. The basic axioms and rules are similar to those from \cite[Section 7.3]{Blackburn-et-al-ML} for the system $\mathbf{K}_h+\text{\textsc{Rules}}$, and  are adapted for the case of the binary modality $\Bmod$. Thus, we assume the following axioms:
\begin{gather}
    \text{All classical propositional tautologies}\tag{$\mathtt{CT}$}\\
    \neg\Bmod(p,q)\leftrightarrow\Nec(\neg p,\neg q)\tag{$\mathtt{dual}$}\\
    \Nec(p\to q,r)\to(\Nec(p,r)\to\Nec(q,r))\tag{$\mathtt{K}_1$}\\
    \Nec(r,p\to q)\to(\Nec(r,p)\to\Nec(r,q))\tag{$\mathtt{K}_2$}\\
    \at_i(p\to q)\to(\at_i p\to \at_i q)\tag{$\mathtt{K}_{\at}$}\\
    \neg\at_i p\leftrightarrow\at_i\neg p\tag{$\mathtt{selfdual}$}\\
    \at_i i\tag{$\mathtt{ref}$}\\
    i\wedge p\to \at_i p\tag{$\mathtt{intro}$}\\
    \Bmod(\at_i p,q)\to \at_i p\tag{$\mathtt{back}_1$}\\
    \Bmod(q,\at_i p)\to \at_i p\tag{$\mathtt{back}_2$}\\
    \at_i\at_j p\to \at_j p\tag{$\mathtt{agree}$}\\
    \at_ji\rightarrow\at_ij\tag{$\mathtt{sym}$}\\
    \at_ij\wedge\at_jp\rightarrow\at_ip\tag{$\mathtt{nom}$}
\end{gather}
The rules of the system are:
\begin{gather}
    \text{From $\vdash\varphi\to\psi$ and $\vdash\varphi$ infer $\vdash\psi$.}\tag{$\mathtt{MP}$}\\
    \text{From $\vdash\varphi$ infer $\vdash\Nec(\varphi,\psi)$.}\tag{$\mathtt{Nec}_1$}\\
    \text{From $\vdash\psi$ infer $\vdash\Nec(\varphi,\psi)$.}\tag{$\mathtt{Nec}_2$}\\
    \text{From $\vdash\varphi$ infer $\vdash\at_i\varphi$}\tag{$\mathtt{Nec}_{\at}$}\\
    \text{\parbox{0.5\textwidth}{From $\vdash\varphi$ infer $\vdash\sigma(\varphi)$ where $\sigma(\varphi)$ is a uniform substitution which replaces propositional variables by formulas and nominals by nominals.}}\tag{$\mathtt{Subst}$}\\[+3pt]
    \text{\parbox{0.5\textwidth}{From $\vdash i\rightarrow\theta$ infer $\vdash\theta$, if $i$ does not occur in $\theta$.}}\tag{$\mathtt{Name}$}\label{eq:Name}\\[+3pt]
    \text{\parbox{0.5\textwidth}{From $\vdash\at_i\Bmod(j,k)\wedge \at_j\varphi\wedge\at_k\psi\rightarrow\theta$ infer $\vdash\at_i\Bmod(\varphi,\psi)\rightarrow\theta$, if $i,j,k$ are pairwise different and $j,k$ do not occur in $\varphi,\psi$ and $\theta$.}}\tag{$\mathtt{Paste}$}\label{eq:Paste}
\end{gather}

\pagebreak

Additional specific axioms are \eqref{HB1}--\eqref{HB7}, \eqref{HB8'}, which have geometric meaning and which are to guarantee that the logic we obtain is the logic of countable dense linear betweenness frames without endpoints. We will denote this system by $\Bh$ (a~hybrid logic of betweenness).

\subsection{Completeness of \texorpdfstring{$\Bh$}{Bh}}

For the completeness proof, we essentially follow the proof strategy from \cite[Section 7.3]{Blackburn-et-al-ML}. We list the relevant lemmas and only prove them when it is related to the binary modality.

\begin{definition}
A model $\frM\defeq\langle W,B,V\rangle$ is \emph{named} if for all $x\in W$, there is a nominal $i$ such that $V(i)=\{x\}$. A $\Bh$-maximal consistent set $\Gamma$ is \emph{named} if it contains a nominal (and this nominal is a \emph{name for} $\Gamma$).
\end{definition}

\begin{definition}
Given a pure formula $\varphi$, we say that $\psi$ is a \emph{pure instance} of $\varphi$ if is obtained from $\varphi$ by uniformly substituting nominals for nominals.
\end{definition}

\begin{lemma}[{\citealp[Lemma 7.22]{Blackburn-et-al-ML}}]\label{Lemma:model:to:frame}
Given a named model $\frM\defeq\langle\frF,V\rangle$ and a pure formula $\varphi$, if $\frM\Vdash\psi$ for all pure instances $\psi$ of $\varphi$, then $\frF\Vdash\varphi$.
\end{lemma}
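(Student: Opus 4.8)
The plan is to prove the contrapositive-style claim directly: assume $\frM \Vdash \psi$ for every pure instance $\psi$ of $\varphi$, and show $\frF \Vdash \varphi$, i.e., that $\frF, g \Vdash \varphi$ for every valuation $g$ on $\frF$. Since $\varphi$ is pure, its truth at a point under a valuation $g$ depends only on the restriction of $g$ to $\Nom$ — that is, only on which points $g$ assigns to the nominals occurring in $\varphi$. So it suffices to control the nominal-assignments.

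First I would fix an arbitrary valuation $g$ on $\frF$ and an arbitrary point $w \in W$. Let $i_1, \dots, i_n$ be the nominals occurring in $\varphi$, and let $w_1 = g(i_1), \dots, w_n = g(i_n)$ be the points they name under $g$ (recall valuations send nominals to singletons, which we identify with their elements). Because $\frM = \langle \frF, V\rangle$ is a \emph{named} model, every point of $W$ carries a nominal: pick nominals $j_1, \dots, j_n$ with $V(j_k) = \{w_k\}$ for each $k$. Now let $\psi$ be the pure instance of $\varphi$ obtained by uniformly substituting $j_k$ for $i_k$ throughout. By hypothesis $\frM \Vdash \psi$, so in particular $\frM, w \Vdash \psi$.

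The remaining step is a translation/substitution lemma: since $\varphi$ is pure and $\psi$ is obtained by the nominal substitution $[j_k/i_k]$, and since $V(j_k) = \{w_k\} = \{g(i_k)\}$, the two pointed models $\langle \frF, V, w\rangle$ evaluating $\psi$ and $\langle \frF, g, w\rangle$ evaluating $\varphi$ agree — both interpret the relevant nominal positions by the same points $w_1, \dots, w_n$, and a pure formula contains no propositional variables, so the rest of $V$ and $g$ is irrelevant. A routine induction on the structure of $\varphi$ (the atomic case being the nominal clause, where the substitution exactly matches the valuations; the $\Bmod$, $\at_i$, Boolean cases being immediate since $\varphi$ is pure) gives $\frM, w \Vdash \psi \iff \frF, g, w \Vdash \varphi$. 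Hence $\frF, g, w \Vdash \varphi$. As $w$ and $g$ were arbitrary, $\frF \Vdash \varphi$.

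The only subtlety — and the ``main obstacle'' such as it is — is making the substitution lemma precise for the \emph{binary} operator $\Bmod$ and for the satisfaction operators $\at_i$, since those are where the argument deviates from the purely propositional-modal template; but each clause is immediate once one notes that purity removes any dependence on $V \restriction \Var$ versus $g \restriction \Var$, and the nominal clause is exactly where the choice $V(j_k) = \{g(i_k)\}$ was engineered to make things match. This is precisely the content of \citealp[Lemma 7.22]{Blackburn-et-al-ML} adapted to the similarity type $\beta$, so I would state it as such and only remark that the induction goes through unchanged for $\Bmod$ and $\at_i$.
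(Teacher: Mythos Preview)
Your argument is correct and is exactly the standard proof of this lemma (the one in \citealp[Lemma 7.22]{Blackburn-et-al-ML}); the paper itself does not supply a proof but merely cites that reference, so there is nothing further to compare. The only point worth making explicit in the induction is that the substitution also rewrites the subscripts of satisfaction operators, turning $\at_{i_k}$ into $\at_{j_k}$, and that the semantic clause for $\at$ then matches because $V(j_k)=g(i_k)$ --- but you already flag this.
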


\begin{lemma}[{\citealp[Lemma 7.24]{Blackburn-et-al-ML}}]\label{Lemma:yield}
Let $\Gamma$ be a $\Bh$-maximal consistent set. For every nominal $i$, let $\Delta_i:=\{\varphi\mid\at_i\varphi\in\Gamma\}$ (a \emph{named set yielded by} $\Gamma$). Then,
\begin{enumerate}[label=(\roman*),itemsep=0pt]
\item For each nominal $i$, $\Delta_{i}$ is an $\Bh$-maximal consistent set containing $i$.
\item For all nominals $i$ and $j$, if $i\in\Delta_j$ then $\Delta_i=\Delta_j$.
\item For all nominals $i$ and $j$, $\at_i\varphi\in\Delta_j$ iff $\at_i\varphi\in\Gamma$.
\item If $k$ is a name for $\Gamma$, then $\Gamma=\Delta_k$.
\end{enumerate}
\end{lemma}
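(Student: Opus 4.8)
The statement is the exact analogue of \citep[Lemma 7.24]{Blackburn-et-al-ML}, and since it concerns only the satisfaction operators, nominals, and Boolean connectives, the binary modality $\Bmod$ plays no role whatsoever. The plan is therefore to rerun the standard argument, using only the $\at$-axioms $(\mathtt{K}_{\at})$, $(\mathtt{selfdual})$, $(\mathtt{ref})$, $(\mathtt{intro})$, $(\mathtt{agree})$, $(\mathtt{sym})$, $(\mathtt{nom})$ together with the rule $(\mathtt{Nec}_{\at})$. The preliminary step I would record is that each $\at_i$ behaves as a \emph{self-dual normal box}: from $(\mathtt{K}_{\at})$ and $(\mathtt{Nec}_{\at})$ one gets that $\at_i$ distributes over implication and conjunction and sends theorems to theorems, and from $(\mathtt{selfdual})$ that $\at_i\neg\varphi\leftrightarrow\neg\at_i\varphi$. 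These facts immediately yield that $\Delta_i$ is deductively closed and that, for every $\varphi$, exactly one of $\at_i\varphi,\at_i\neg\varphi$ lies in the maximal consistent set $\Gamma$.

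For (i): consistency of $\Delta_i$ follows by contraposition — a derivation $\vdash\neg(\varphi_1\wedge\dots\wedge\varphi_n)$ with all $\varphi_m\in\Delta_i$ yields, via $(\mathtt{Nec}_{\at})$ and distribution, $\vdash\neg(\at_i\varphi_1\wedge\dots\wedge\at_i\varphi_n)$, contradicting $\{\at_i\varphi_1,\dots,\at_i\varphi_n\}\subseteq\Gamma$. Maximality is the ``exactly one of $\at_i\varphi,\at_i\neg\varphi$ in $\Gamma$'' observation above. Finally $i\in\Delta_i$ because $\at_i i\in\Gamma$ by $(\mathtt{ref})$ and deductive closure of $\Gamma$. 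For (ii): assume $i\in\Delta_j$, i.e. $\at_j i\in\Gamma$; then $\at_i j\in\Gamma$ by $(\mathtt{sym})$, and for any $\varphi$ the instance $\at_i j\wedge\at_j\varphi\to\at_i\varphi$ of $(\mathtt{nom})$, together with its counterpart $\at_j i\wedge\at_i\varphi\to\at_j\varphi$ obtained by substituting $j$ for $i$, gives $\at_j\varphi\in\Gamma\Leftrightarrow\at_i\varphi\in\Gamma$, that is $\Delta_i=\Delta_j$.

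For (iii): the left-to-right direction is immediate from $(\mathtt{agree})$, since $\at_i\varphi\in\Delta_j$ means $\at_j\at_i\varphi\in\Gamma$ and $(\mathtt{agree})$ gives $\at_j\at_i\varphi\to\at_i\varphi$. For right-to-left I would first derive the rigidity principle $\vdash\at_i\varphi\leftrightarrow\at_j\at_i\varphi$: one direction is $(\mathtt{agree})$; conversely, $\at_j\at_i\varphi\vee\at_j\neg\at_i\varphi$ is a theorem (it is $\at_j$ of a tautology), while $\at_j\at_i\varphi\wedge\at_j\neg\at_i\varphi$ is inconsistent, since by $(\mathtt{selfdual})$ and $(\mathtt{agree})$ it entails $\at_i\varphi\wedge\neg\at_i\varphi$; hence whenever $\at_i\varphi\in\Gamma$ the disjunct $\at_j\neg\at_i\varphi$ is excluded, so $\at_j\at_i\varphi\in\Gamma$. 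For (iv): if $k$ is a name for $\Gamma$, i.e. $k\in\Gamma$, then for $\varphi\in\Gamma$ we get $\at_k\varphi\in\Gamma$ by $(\mathtt{intro})$, so $\varphi\in\Delta_k$; conversely, applying $(\mathtt{intro})$ to $\neg\varphi$ and contraposing with $(\mathtt{selfdual})$ yields $\vdash\at_k\varphi\wedge k\to\varphi$, so $\at_k\varphi\in\Gamma$ and $k\in\Gamma$ force $\varphi\in\Gamma$; thus $\Gamma=\Delta_k$.

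There is no genuine obstacle here: the only thing that requires care is checking that the ``folklore'' properties of $\at$ — distribution, self-duality, the rigidity principle, and the contrapositive form of $(\mathtt{intro})$ — are really derivable from the listed axioms rather than tacitly assumed, and that one does not inadvertently invoke the rules $(\mathtt{Name})$ or $(\mathtt{Paste})$, which are not needed for this lemma and only enter later, in the Existence/Lindenbaum-style lemmas used to build the named model.
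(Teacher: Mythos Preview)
Your argument is correct and is precisely the standard one from \citep[Lemma~7.24]{Blackburn-et-al-ML}; the paper itself does not give a proof of this lemma at all, since it announces that it will ``only prove [lemmas] when it is related to the binary modality'' and simply cites Blackburn et al.\ for this result. Your observation that $\Bmod$ plays no role and that only the $\at$-axioms and $(\mathtt{Nec}_{\at})$ are needed is exactly the reason the paper omits the proof, and your derivations of the auxiliary facts (rigidity of $\at_i\varphi$, the contrapositive form of $(\mathtt{intro})$) are clean and fully justified by the listed axioms.
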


\begin{definition}
A $\Bh$-maximal consistent set $\Gamma$ is \emph{pasted} if $\at_i\Bmod(\varphi,\psi)\in\Gamma$ implies that for some nominals $j$ and $k$, $\at_i\Bmod(j,k)\land\at_j\varphi\land\at_k\psi\in\Gamma$.
\end{definition}

\enlargethispage{50pt}

\begin{lemma}[Extended Lindenbaum Lemma, after {\citealp[Lemma 7.25]{Blackburn-et-al-ML}}]
Let $\Omega'$ be a (countably) infinite collection of nominals disjoint from $\Omega$, and let $\Hbat'$ be the language obtained by adding these new nominals to $\Hbat$. Then every $\Bh$-consistent set of formulas in language $\Hbat$ can be extended to a named and pasted $\Bh$-maximal consistent set in language $\Hbat'$.
\end{lemma}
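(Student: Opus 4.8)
The plan is to follow the blueprint of the standard Extended Lindenbaum Lemma for hybrid logic (\citealp[Lemma 7.25]{Blackburn-et-al-ML}), adapting only the step that deals with the binary modality $\Bmod$ and its associated \eqref{eq:Paste} rule. First I would enumerate the formulas of $\Hbat'$ as $\varphi_0,\varphi_1,\ldots$, using the fact that the new nominal set $\Omega'$ is countably infinite, and I would fix for each stage a supply of ``fresh'' nominals from $\Omega'$ not yet used. Starting from the given $\Bh$-consistent set $\Sigma$ of $\Hbat$-formulas, I would build an increasing chain $\Sigma = \Sigma_0 \subseteq \Sigma_1 \subseteq \cdots$ of $\Bh$-consistent sets, and take the union $\Sigma^+ \defeq \bigcup_n \Sigma_n$.

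The construction at stage $n+1$ proceeds by cases on $\varphi_n$. If $\Sigma_n \cup \{\varphi_n\}$ is inconsistent, set $\Sigma_{n+1} \defeq \Sigma_n$ (so maximality comes for free in the limit). If it is consistent, we add $\varphi_n$, and then perform two ``witnessing'' repairs to guarantee the named and pasted properties are eventually met. For naming: if $\varphi_n$ is of the form $\neg\theta$ that would otherwise be consistently addable but we have not yet placed a nominal, we instead note the standard trick---at the very first stage we throw in a fresh nominal $k \in \Omega'$; the \eqref{eq:Name} rule guarantees $\Sigma \cup \{k\}$ stays $\Bh$-consistent. For pasting: if $\varphi_n = \at_i\Bmod(\varphi,\psi)$ and $\Sigma_n \cup \{\varphi_n\}$ is consistent, pick nominals $j,k \in \Omega'$ distinct from each other and from $i$ and not occurring in $\varphi,\psi$ or anything relevant, and set $\Sigma_{n+1} \defeq \Sigma_n \cup \{\varphi_n, \at_i\Bmod(j,k) \wedge \at_j\varphi \wedge \at_k\psi\}$. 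The key point is that this remains $\Bh$-consistent: if it were not, then $\Sigma_n \cup \{\varphi_n\}$ would $\Bh$-prove the negation of the witness conjunction, i.e. $\vdash \at_i\Bmod(j,k)\wedge\at_j\varphi\wedge\at_k\psi \rightarrow \theta$ for $\theta \defeq \neg\bigwedge\Sigma_n^{\mathrm{fin}} \vee \neg\varphi_n$ (a finite conjunction suffices), and since $j,k$ were chosen fresh, the \eqref{eq:Paste} rule yields $\vdash \at_i\Bmod(\varphi,\psi) \rightarrow \theta$, contradicting the consistency of $\Sigma_n \cup \{\varphi_n\}$. Since $\Bmod$ has two argument places rather than one, there is just a single \eqref{eq:Paste} rule to invoke (unlike the binary-diamond case where one might worry about each coordinate separately), so this case is actually no harder than in the monomodal setting.

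Finally I would verify the three claimed properties of $\Sigma^+$. Consistency: each $\Sigma_n$ is $\Bh$-consistent and consistency is preserved under unions of chains, since a derivation of $\bot$ uses only finitely many premises, hence lies inside some $\Sigma_n$. Maximality: for every $\Hbat'$-formula $\varphi_n$, either $\varphi_n \in \Sigma^+$ or, because $\Sigma_n\cup\{\varphi_n\}$ was inconsistent, $\neg\varphi_n \in \Sigma^+$ (after the corresponding stage for $\neg\varphi_n$); a routine argument shows exactly one of $\varphi_n, \neg\varphi_n$ lies in $\Sigma^+$. Named: the nominal $k$ thrown in at the start belongs to $\Sigma^+$. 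Pasted: if $\at_i\Bmod(\varphi,\psi) \in \Sigma^+$, it equals $\varphi_n$ for some $n$; since it is in the consistent $\Sigma^+$ it was consistently addable at stage $n$, so the witnesses $\at_i\Bmod(j,k)\wedge\at_j\varphi\wedge\at_k\psi$ were placed in $\Sigma_{n+1} \subseteq \Sigma^+$.

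The only genuine obstacle is the consistency of the pasting step, and the resolution is exactly the appeal to the admissible rule \eqref{eq:Paste} sketched above; everything else is a verbatim transcription of the monomodal hybrid completeness machinery of \cite{Blackburn-et-al-ML}, with $\Bmod(\cdot,\cdot)$ in place of $\Diamond$. I would present the proof by citing \citealp[Lemma 7.25]{Blackburn-et-al-ML} for the overall scaffolding and writing out in detail only the \eqref{eq:Paste}-based consistency argument for the $\Bmod$ case.
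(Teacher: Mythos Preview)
Your proposal is correct and follows essentially the same approach as the paper's proof: add a fresh nominal at the outset (justified by \eqref{eq:Name}), run the standard Lindenbaum enumeration, and at each $\at_i\Bmod(\varphi,\psi)$ stage add the witness conjunction $\at_i\Bmod(j,k)\wedge\at_j\varphi\wedge\at_k\psi$ with fresh $j,k$, appealing to \eqref{eq:Paste} for consistency. You actually spell out the contrapositive consistency argument for the pasting step in more detail than the paper does (the paper simply asserts that \eqref{eq:Paste} preserves consistency), so your write-up is if anything more explicit; the only cosmetic wrinkle is that your description of the naming step is slightly garbled (the clause about ``$\varphi_n$ of the form $\neg\theta$'' is extraneous), but the substance---adding a fresh nominal at stage~$0$---matches the paper exactly.
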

\begin{proof}[Proof sketch] We reason in an analogous way as the authors of \citep{Blackburn-et-al-ML}. Begin with an enumeration of $\Omega'$. Fix a consistent set $\Sigma$ and expand it to $\Sigma_0\defeq\Sigma\cup\{k\}$, where $k$ is the first nominal in the enumeration. Enumerate all formulas in the new language $\Hbat'$. By \eqref{eq:Name} $\Sigma_0$ is consistent. Then, assume that $\Sigma_n$ has been defined and is consistent, and pick the $n+1$-th formula in the enumeration, say $\varphi_{n+1}$. If $\Sigma_n\cup\{\varphi_{n+1}\}$ is inconsistent, then $\Sigma_{n+1}\defeq\Sigma_n$. Otherwise, there are two possibilities:
\begin{enumerate}[label=(\roman*),itemsep=0pt]
    \item $\varphi_{n+1}$ has the form different from $\at_i\Bmod(\varphi,\psi)$, in which case we put $\Sigma_{n+1}\defeq\Sigma_n\cup\{\varphi_{n+1}\}$;
    \item $\varphi_{n+1}$ is of the form $\at_i\Bmod(\varphi,\psi)$, in which case we put
    \[
    \Sigma_{n+1}\defeq\Sigma_n\cup\{\varphi_{n+1}\}\cup\{\at_i\Bmod(j,k)\land\at_j\varphi\land\at_k\psi\}
    \]
    where $j$ and $k$ are among the new nominals (as we are at the finite stage of construction we will always have new nominals); \eqref{eq:Paste} rule guarantees that the transition from $\Sigma_n$ to $\Sigma_{n+1}$ preserves consistency.
\end{enumerate}
    We put $\Gamma\defeq\bigcup_{n=0}^{\infty}\Sigma_n$. The set is named, pasted, and $\Bh$-maximal consistent.
\end{proof}

\begin{definition}[{\citealp[Definition 7.26]{Blackburn-et-al-ML}}]
Let $\Gamma$ be a named and pasted $\Bh$-maximal consistent set. The \emph{named model yielded by} $\Gamma$ is the triple $\frM^{\Gamma}\defeq\langle W^{\Gamma},B^{\Gamma},V^{\Gamma}\rangle$, where
\begin{enumerate}[label=(\roman*),itemsep=0pt]
\item $W^{\Gamma}$ is the set of all named sets
yielded by $\Gamma$.
\item $B^{\Gamma}(u,v,w)$ iff $\Bmod(\varphi,\psi)\in v$ whenever $\varphi\in u$ and $\psi\in w$.
\item $V^{\Gamma}(p)\defeq\{x\in W^{\Gamma}\mid p\in x\}$ and $V^{\Gamma}(i)\defeq\{x\in W^{\Gamma}\mid i\in x\}$.
\end{enumerate}
\end{definition}

By the first two items of Lemma \ref{Lemma:yield}, we see that for every nominal $i$, $V^{\Gamma}(i)=\{\Delta_i\}$ (and so $V^\Gamma$ is correctly defined, as its values for the nominals are singleton subsets of the universe).

For the Existence Lemma below we need the fact that the formula
\begin{equation}\label{eq:bridge}\tag{$\mathtt{bridge}$}
\Bmod(j,k)\land \at_j\varphi\land\at_k\psi\to\Bmod(\varphi,\psi)\footnotemark
\end{equation}
\footnotetext{This is a binary counterpart of the formula (Bridge) used by \cite{Blackburn-et-al-ML}.}
is a theorem of $\Bh$. We leave the proof to the reader.

\begin{lemma}[{Existence Lemma, \citealp[Lemma 7.27]{Blackburn-et-al-ML}}]\label{lem:existence}
Let $\Gamma$ be a named and pasted $\Bh$-maximal consistent set, and let $\frM^{\Gamma}=\langle W^{\Gamma},B^{\Gamma},V^{\Gamma}\rangle$ be the named model yielded by $\Gamma$. If $u\in W^\Gamma$ and $\Bmod(\varphi,\psi)\in u$, then there are $v,w\in W^\Gamma$ such that $B^\Gamma(v,u,w)$ and $\varphi\in v$, $\psi\in w$.
\end{lemma}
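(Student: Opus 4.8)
The plan is to follow the standard Henkin-style argument for the Existence Lemma in hybrid logic, as in \citep[Lemma 7.27]{Blackburn-et-al-ML}, adapted to the binary modality $\Bmod$. So, suppose $u \in W^\Gamma$ and $\Bmod(\varphi,\psi) \in u$. Since $W^\Gamma$ consists of named sets, $u = \Delta_i$ for some nominal $i$ with $i \in u$; by the \texttt{intro} axiom and the definition of $\Delta_i$ we then have $\at_i\Bmod(\varphi,\psi) \in \Gamma$. Because $\Gamma$ is pasted, there are nominals $j$ and $k$ such that $\at_i\Bmod(j,k) \wedge \at_j\varphi \wedge \at_k\psi \in \Gamma$. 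Now set $v \defeq \Delta_j$ and $w \defeq \Delta_k$; these are in $W^\Gamma$ by Lemma~\ref{Lemma:yield}(i), and $\varphi \in v$, $\psi \in w$ directly from $\at_j\varphi, \at_k\psi \in \Gamma$ together with the definition of the yielded sets.

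It remains to check $B^\Gamma(v,u,w)$, i.e.\ that $\Bmod(\chi_1,\chi_2) \in u$ whenever $\chi_1 \in v$ and $\chi_2 \in w$. Fix such $\chi_1, \chi_2$: then $\at_j\chi_1 \in \Gamma$ and $\at_k\chi_2 \in \Gamma$. We also have $\at_i\Bmod(j,k) \in \Gamma$. Apply the \eqref{eq:bridge} theorem $\Bmod(j,k) \wedge \at_j\chi_1 \wedge \at_k\chi_2 \to \Bmod(\chi_1,\chi_2)$: prefixing with $\at_i$ (using $\mathtt{Nec}_{\at}$ and $\mathtt{K}_{\at}$, or directly, since $\at_i$ distributes over implication and conjunction by \texttt{selfdual} and $\mathtt{K}_{\at}$) yields $\at_i\Bmod(j,k) \wedge \at_i\at_j\chi_1 \wedge \at_i\at_k\chi_2 \to \at_i\Bmod(\chi_1,\chi_2) \in \Gamma$. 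By the \texttt{agree} axiom, $\at_i\at_j\chi_1 \leftrightarrow \at_j\chi_1$ and similarly for $k$, so the antecedent is in $\Gamma$, hence $\at_i\Bmod(\chi_1,\chi_2) \in \Gamma$, which means $\Bmod(\chi_1,\chi_2) \in \Delta_i = u$. This establishes $B^\Gamma(v,u,w)$ and completes the argument.

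The only genuinely new ingredient compared with the unary case is the use of the binary \eqref{eq:bridge} formula and the fact that the \texttt{Paste} rule produces \emph{two} witnessing nominals $j,k$ at once (matching the two argument places of $\Bmod$); both are already in place in the excerpt, so the main obstacle is merely bookkeeping: making sure the $\at_i$-prefixing step is justified by the available $\mathtt{K}_{\at}$, \texttt{selfdual}, \texttt{agree} axioms and $\mathtt{Nec}_{\at}$ rule, and that $v, w$ are genuinely elements of $W^\Gamma$ (which is exactly Lemma~\ref{Lemma:yield}(i)). No $\omega$-saturation or model surgery is needed here — this is purely a maximal-consistent-set manipulation — so I expect the proof to be short once the \eqref{eq:bridge} theorem is granted, as the paper does.
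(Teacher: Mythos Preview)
Your proof is correct and follows essentially the same route as the paper's: name $u$ as $\Delta_i$, use pastedness to obtain witnesses $j,k$, set $v=\Delta_j$, $w=\Delta_k$, and verify $B^\Gamma(v,u,w)$ via the \eqref{eq:bridge} theorem. The only cosmetic difference is that the paper transfers $\at_j\delta$ and $\at_k\gamma$ into $\Delta_i$ (via Lemma~\ref{Lemma:yield}(iii)) and applies \eqref{eq:bridge} inside $\Delta_i$, whereas you prefix \eqref{eq:bridge} with $\at_i$ and work inside $\Gamma$ using $\mathtt{agree}$; these are two sides of the same coin.
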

\begin{proof}
    Again, the proof is basically the same as that for the unary case. Let $u\in W^{\Gamma}$ be such that $\Bmod(\varphi,\psi)\in u$. Then, since the model is named, there is an $i$ such that $u=\Delta_i$. Thus $\at_i\Bmod(\varphi,\psi)\in\Gamma$. As $\Gamma$ is pasted, there are nominals $j$ and $k$ such that $\at_i\Bmod(j,k)\wedge\at_j\varphi\land\at_k\psi\in\Gamma$. In consequence, $\varphi\in\Delta_j$ and $\psi\in\Delta_k$.

    Take $\delta\in\Delta_j$ and $\gamma\in\Delta_k$. Thus, $\at_j\delta\wedge\at_k\gamma\in\Gamma$. By Lemma~\ref{lem:4-properties}(iii) $\at_j\delta\wedge\at_k\gamma\in\Delta_i$. As $\at_i\Bmod(j,k)$ is an element of $\Delta_i$, by \eqref{eq:bridge} we get that $\Bmod(\delta,\gamma)\in\Delta_i$. Thus $B^{\Gamma}(\Delta_j,\Delta_i,\Delta_k)$, as required.
\end{proof}

\begin{lemma}[{Truth Lemma, \citealp[Lemma 7.28]{Blackburn-et-al-ML}}]
Let $\frM^{\Gamma}=\langle W^{\Gamma},B^{\Gamma},V^{\Gamma}\rangle$ be the named model yielded by a named and pasted $\Bh$-maximal consistent set $\Gamma$, and let $u\in W^\Gamma$. Then for all formulas $\varphi$, we have $\varphi\in u$ iff $\frM, u\Vdash\varphi$.
\end{lemma}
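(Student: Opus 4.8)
The plan is to prove the Truth Lemma by induction on the complexity of $\varphi$, following the standard hybrid-logic template from \cite[Lemma 7.28]{Blackburn-et-al-ML} and only doing genuine work at the steps that involve the binary modality $\Bmod$. First I would set up the induction: the base cases are propositional letters $p$ and nominals $i$, both of which are immediate from the definition of $V^{\Gamma}$ together with the fact (noted just before the lemma) that $V^{\Gamma}(i)=\{\Delta_i\}$, so $i\in u$ iff $u=\Delta_i$ iff $\frM^{\Gamma},u\Vdash i$. The Boolean cases $\neg\psi$ and $\psi\wedge\chi$ go through using maximal consistency of each $u\in W^{\Gamma}$ in the usual way.

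The two substantive cases are $\Bmod(\psi,\chi)$ and $\at_i\psi$. For $\Bmod(\psi,\chi)$: the left-to-right direction is exactly the Existence Lemma (Lemma~\ref{lem:existence}) — if $\Bmod(\psi,\chi)\in u$ then it yields $v,w\in W^{\Gamma}$ with $B^{\Gamma}(v,u,w)$, $\psi\in v$, $\chi\in w$, and the induction hypothesis turns membership into forcing, so $\frM^{\Gamma},u\Vdash\Bmod(\psi,\chi)$. For right-to-left, suppose $\frM^{\Gamma},u\Vdash\Bmod(\psi,\chi)$; then there are $v,w$ with $B^{\Gamma}(v,u,w)$ and $\frM^{\Gamma},v\Vdash\psi$, $\frM^{\Gamma},w\Vdash\chi$, hence by IH $\psi\in v$ and $\chi\in w$; by the definition of $B^{\Gamma}$ (clause (ii)), $\Bmod(\psi,\chi)\in u$. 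For $\at_i\psi$: I would use Lemma~\ref{Lemma:yield}. If $\at_i\psi\in u$, then since $u$ is a named set yielded by $\Gamma$ we have $\at_i\psi\in\Gamma$ (item (iii) of Lemma~\ref{Lemma:yield}, i.e.\ $\at_i\varphi\in\Delta_j$ iff $\at_i\varphi\in\Gamma$), so $\psi\in\Delta_i$, and since $\Delta_i$ is the unique element of $W^{\Gamma}$ containing $i$, the IH gives $\frM^{\Gamma},\Delta_i\Vdash\psi$, which is exactly $\frM^{\Gamma},u\Vdash\at_i\psi$. The converse runs backwards through the same equivalences, using that $\frM^{\Gamma}$ is named so the witness world forcing $\psi$ is precisely $\Delta_i$, together with maximal consistency of $u$ (via axioms $\mathtt{selfdual}$, $\mathtt{agree}$, $\mathtt{intro}$) to pull $\at_i\psi$ into $u$.

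I do not expect a serious obstacle here; the only point requiring care is making sure the bookkeeping with named sets is airtight — specifically that every $u\in W^{\Gamma}$ really is of the form $\Delta_j$ for some nominal $j$ (true by construction of $W^{\Gamma}$ and the fact that $\Gamma$ is named and pasted), and that clause (ii) of the definition of $B^{\Gamma}$ matches the semantics of $\Bmod$ symmetrically in both coordinates. The $\at_i$ case is where one must invoke the full strength of Lemma~\ref{Lemma:yield}, and the $\Bmod$ case is where the earlier Existence Lemma does the heavy lifting, so the proof is essentially an assembly of already-established pieces. The standard remark that the Truth Lemma plus the Extended Lindenbaum Lemma yields completeness of $\Bh$ with respect to named models, and then — via Lemma~\ref{Lemma:model:to:frame} and the purity of the axioms \eqref{HB1}--\eqref{HB7}, \eqref{HB8'} together with Theorem~\ref{th:relative-density} — completeness with respect to $\DLBWE$, would follow in the subsequent paragraphs rather than inside this lemma.
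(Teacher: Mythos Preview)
Your proposal is correct and is exactly the standard argument one would give. The paper itself does not supply a proof of this lemma: at the start of the completeness subsection the authors announce that they ``list the relevant lemmas and only prove them when it is related to the binary modality,'' and since the binary-modality work is already done in the Existence Lemma, the Truth Lemma is simply stated with a citation to \cite[Lemma 7.28]{Blackburn-et-al-ML}. Your induction---base cases from the definition of $V^{\Gamma}$, Booleans by maximal consistency, $\Bmod$ via the Existence Lemma in one direction and the defining clause of $B^{\Gamma}$ in the other, and $\at_i$ via Lemma~\ref{Lemma:yield}(iii)---is precisely the adaptation of that standard proof to the present setting, so there is nothing to compare against beyond noting that your write-up fills in what the paper deliberately omits.
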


\begin{theorem}[{Completeness Theorem, \citealp[Lemma 7.29]{Blackburn-et-al-ML}}]\label{th:completeness-for-Bh}
Every $\Bh$-consistent set of formulas is satisfiable in a countable named model based on a frame that
validates every extra betweenness axiom of logic $\Bh$. Hence every $\Bh$-consistent set of formulas is satisfiable on a model based on a countable frame from the class $\DLBWE$.
\end{theorem}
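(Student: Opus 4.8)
The plan is to run the standard Henkin-style completeness argument for pure axiomatic extensions of hybrid logic from \citep[Section~7.3]{Blackburn-et-al-ML}, now simply assembling the lemmas already established. Fix a $\Bh$-consistent set $\Sigma$ of formulas of $\Hbat$. First I would invoke the Extended Lindenbaum Lemma to extend $\Sigma$ to a named and pasted $\Bh$-maximal consistent set $\Gamma$ in the language $\Hbat'$ obtained by adjoining countably many fresh nominals, and let $k$ be a name for $\Gamma$. I then form the named model $\frM^{\Gamma}=\langle W^{\Gamma},B^{\Gamma},V^{\Gamma}\rangle$ yielded by $\Gamma$. By the Truth Lemma, for every world $u\in W^{\Gamma}$ and every formula $\varphi$ we have $\frM^{\Gamma},u\Vdash\varphi$ iff $\varphi\in u$. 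Since $\Gamma=\Delta_{k}$ by Lemma~\ref{Lemma:yield}(iv), the set $\Delta_{k}$ is a world of $W^{\Gamma}$, and $\Sigma\subseteq\Gamma=\Delta_{k}$, so every formula of $\Sigma$ is true at $\Delta_{k}$ in $\frM^{\Gamma}$. The model is countable because $\Hbat'$ has only countably many nominals and every world of $W^{\Gamma}$ is a named set, i.e., of the form $\Delta_{i}$ for a nominal $i$.

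It remains to verify that the underlying frame $\frF^{\Gamma}=\langle W^{\Gamma},B^{\Gamma}\rangle$ belongs to $\DLBWE$. Since $\Bh$ is closed under substitution of nominals for nominals, every pure instance of each extra axiom \eqref{HB1}--\eqref{HB7} and of \eqref{HB8'} is again a $\Bh$-theorem, hence belongs to every $\Bh$-maximal consistent set, hence---by the Truth Lemma---is globally true in $\frM^{\Gamma}$. As $\frM^{\Gamma}$ is a named model, Lemma~\ref{Lemma:model:to:frame} gives $\frF^{\Gamma}\Vdash\varphi$ for each of these pure axioms $\varphi$. Frame-validity of \eqref{HB1}--\eqref{HB7} forces on $\frF^{\Gamma}$ the first-order conditions \eqref{B1}--\eqref{B7}, these being their hybrid correspondents recorded in Section~\ref{sec:definability}, so $\frF^{\Gamma}\in\LBWE$. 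Now that $\frF^{\Gamma}\in\LBWE$ and $\frF^{\Gamma}$ validates \eqref{HB8'}, Theorem~\ref{th:relative-density} (via the Sahlqvist--van Benthem equivalence of \eqref{HB8'} with $\Conv p\rarrow\Conv\Conv p$) yields that \eqref{B8} holds in $\frF^{\Gamma}$. Hence $\frF^{\Gamma}\in\DLBWE$, and since $\frM^{\Gamma}$ is a countable named model based on $\frF^{\Gamma}$, both assertions of the theorem follow.

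The step I expect to be the main obstacle is conceptual rather than computational: the passage to membership in $\DLBWE$ is two-step, because density cannot be extracted from \eqref{HB8'} in isolation but only after $\frF^{\Gamma}\in\LBWE$ has been secured. This is exactly why \eqref{HB8} (which mentions $\Ex$ and is not pure) had to be replaced by the pure, $\LBWE$-relative formula \eqref{HB8'}, and why Theorem~\ref{th:relative-density} is the crucial ingredient here. One further point requiring a little care is that the fresh nominals added by the Extended Lindenbaum Lemma do not spoil purity: pure formulas and the substitution rule $\mathtt{Subst}$ involve only nominals and operators, so every pure instance formed in $\Hbat'$ is still a $\Bh$-theorem. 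Everything else is a direct application of the Truth, Existence, and Extended Lindenbaum Lemmas exactly as stated above.
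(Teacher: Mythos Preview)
Your proposal is correct and follows essentially the same route as the paper: Extended Lindenbaum Lemma, named model yielded by $\Gamma$, Truth Lemma, countability, and then Lemma~\ref{Lemma:model:to:frame} to lift global truth of the pure axioms to frame validity. The only difference is that you spell out the passage from ``$\frF^{\Gamma}$ validates \eqref{HB1}--\eqref{HB7},\eqref{HB8'}'' to ``$\frF^{\Gamma}\in\DLBWE$'' in two explicit steps (first $\LBWE$ via the direct correspondents, then density via Theorem~\ref{th:relative-density}), whereas the paper leaves this implicit; your elaboration is a welcome clarification rather than a different argument.
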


\begin{proof}
Given an $\Bh$-consistent set of formulas $\Sigma$, use the Extended Lindenbaum Lemma to expand it to a named and pasted set $\Gamma$ in a countable language $\Hbat'$. Let $\frM^\Gamma\defeq\langle W^\Gamma,B^\Gamma,V^\Gamma\rangle$ be the named model yielded by $\Gamma$. Since $\Gamma$ is named, it is in $W^\Gamma$. By the Truth
Lemma, for every $\varphi\in\Sigma$ we have that $\frM^\Gamma,\Gamma\Vdash\varphi$. The model is countable because each state is named by some nominal in $\calL'_B$, and there are only countably many of these.

To see that the canonical frame $\frF^\Gamma=\langle W^\Gamma,B^\Gamma\rangle$ validates all eight specific betweenness axioms, observe that all these belong to every maximal consistent set in $W^\Gamma$. Therefore, they are globally true in $\frM^\Gamma$. Hence, by Lemma \ref{Lemma:model:to:frame}, $\frF^\Gamma$ validates all those axioms too.
\end{proof}

\begin{corollary}
    Every $\Hbat$-formula valid on the frame $\frQ=\langle\Rat,B_<\rangle$ is a~theorem of $\Bh$.
\end{corollary}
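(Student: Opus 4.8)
The plan is to deduce the corollary from the Completeness Theorem~\ref{th:completeness-for-Bh} together with Cantor's categoricity theorem for countable dense linear orders without endpoints. I would argue by contraposition. Suppose $\varphi$ is an $\Hbat$-formula that is not a theorem of $\Bh$; then $\{\neg\varphi\}$ is $\Bh$-consistent (otherwise a normal hybrid system proves $\varphi$), so by Theorem~\ref{th:completeness-for-Bh} there are a countable frame $\frF\defeq\langle W,B\rangle\in\DLBWE$, a valuation $V$ on $\frF$ and a world $w$ with $\langle\frF,V\rangle,w\Vdash\neg\varphi$. It remains to transport this refutation onto $\frQ$.

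The key step is that $\frF$ is isomorphic to $\frQ$. Since $\frF$ validates \eqref{B7} and \eqref{B8}, $W$ is infinite, hence countably infinite. Using the correspondence recalled in Section~\ref{sec:betweenness}, fix an orientation and pass from $B$ to a binary relation $<_B$ with $\langle W,<_B\rangle\in\DLOWE$; whichever orientation is chosen, $B$ is recovered as $B_{<_B}$, i.e. $B(x,y,z)$ holds iff $x<_By<_Bz$ or $z<_By<_Bx$. By Cantor's theorem there is an order isomorphism $g\colon\langle W,<_B\rangle\to\langle\Rat,<\rangle$. Because the definition of $B_<$ from $<$ is insensitive to reversing the order, $g$ is simultaneously an isomorphism of $3$-frames $\langle W,B\rangle\to\langle\Rat,B_<\rangle=\frQ$; in particular $g$ is a bijective bounded morphism.

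It is then routine to push the valuation forward along $g$ by setting $V'(q)\defeq g[V(q)]$ for every $q\in\Var\cup\Nom$ (this $V'$ is again a valuation, as $g$ is a bijection and hence sends singletons to singletons), and to verify by induction on formulas that $\langle\frQ,V'\rangle,g(u)\Vdash\psi$ iff $\langle\frF,V\rangle,u\Vdash\psi$ for all $u\in W$ and all $\Hbat$-formulas $\psi$: the forth and back conditions together with bijectivity of $g$ handle the cases of $\Bmod$ and $\at_i$, the Boolean and nominal cases being immediate. Taking $\psi\defeq\neg\varphi$ and $u\defeq w$ yields $\langle\frQ,V'\rangle,g(w)\Vdash\neg\varphi$, so $\varphi$ is not valid on $\frQ$, which is exactly the contrapositive of the claim.

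The only delicate point --- and the closest thing here to an obstacle --- is the bookkeeping around the Section~\ref{sec:betweenness} correspondence: one has to choose an orientation to obtain $<_B$, check that \eqref{B1}--\eqref{B8} make $\langle W,<_B\rangle$ a dense linear order without endpoints, and confirm that $B_{<_B}=B$ so that an order isomorphism is indeed a betweenness isomorphism. All of this is contained in the construction of \citep{Borsuk-Szmielew-FG} cited there; no genuinely new argument is required, and Cantor's theorem together with the invariance of $\Hbat$-validity under frame isomorphism are standard.
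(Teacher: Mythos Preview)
Your proof is correct and follows essentially the same route as the paper: argue by contraposition, invoke Theorem~\ref{th:completeness-for-Bh} to obtain a countable $\DLBWE$-frame refuting $\varphi$, observe that any such frame is isomorphic to $\frQ$, and transport the refutation along the isomorphism. The paper compresses the isomorphism step into a single sentence (``being countable, $\frF$ is isomorphic to $\frQ$''), whereas you spell out the passage through $<_B$, Cantor's theorem, and the fact that $B_{<_B}=B$; but this is exactly the argument implicit in the paper's appeal to the Section~\ref{sec:betweenness} correspondence.
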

\begin{proof}
    Suppose $\nvdash_{\Bh}\varphi$, which means that $\{\neg\varphi\}$ is $\Bh$-consistent. Thus by Theorem~\ref{th:completeness-for-Bh} there is a model $\frM\defeq\langle\frF,V\rangle$ such that $\frF\in\DLBWE$ is countable and $\frM,w\Vdash\neg\varphi$, for some world $w$. Yet being countable, $\frF$ is isomorphic to $\frQ$, so $\varphi$ fails on a model based on $\frQ$. In consequence, any formula valid on $\frQ$ is a theorem of $\Bh$.
\end{proof}

\subsection{The system \texorpdfstring{$\Bhp$}{Bh+}}

We now move on to the subclass of $\DLBWE$ that contains only complete structures, i.e., those which satisfy the following second-order axiom
\begin{equation}\tag{$\mathtt{B9}$}\label{B9}
\begin{split}
    (\forall X,Y\in 2^W)\,[(\exists w\in W)&(\forall x\in X)(\forall y\in Y)\,B(w,x,y)\rarrow\\
    &(\exists u\in W)(\forall x\in X)(\forall y\in Y)\,B(x,u,y)]\,.
    \end{split}
\end{equation}
We define
\[
\CDLBWE\defeq\DLBWE+\eqref{B9}\,,
\]
the class of \emph{Dedekind complete} (\emph{D-complete}, for short)\footnote{We use the term to avoid confusion with logical completeness.} dense linear betweenness structures without endpoints.

Being second-order, \eqref{B9} axiom cannot have a pure counterpart formula built in $\Hbatex$. Moreover, since all elements of the class are uncountable, $\CDLBWE$ cannot be axiomatized by means of first-order formulas (by L\"owenheim-Skolem theorem). In consequence,
\begin{theorem}
    The class $\CDLBWE$ is not $\Hbatex$-definable by pure formulas.
\end{theorem}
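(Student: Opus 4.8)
The plan is to argue by contradiction using the fact that pure formulas of $\Hbatex$ are preserved under certain frame constructions that do \emph{not} preserve \eqref{B9}. More precisely, I would use two facts: first, that pure $\Hbatex$-formulas are canonical in the sense that any pure axiomatic extension of the basic system for $\Hbatex$ is complete with respect to the class of frames it defines (this is exactly the mechanism invoked for $\Bh$ via \cite[Section 7.3]{Blackburn-et-al-ML}, extended to include $\Ex$); and second, that such a completeness theorem produces a \emph{countable} model, whence a countable frame in the purported class. So suppose toward a contradiction that $\CDLBWE$ were definable by a set $\Phi$ of pure $\Hbatex$-formulas. Then the logic axiomatized by the basic system plus $\Phi$ would be complete with respect to $\CDLBWE$, and in particular any consistent set of formulas would be satisfiable on a \emph{countable} frame belonging to $\CDLBWE$. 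But $\CDLBWE$ contains no countable frames: every dense linear betweenness frame without endpoints satisfying the Dedekind completeness axiom \eqref{B9} corresponds (via the order/betweenness correspondence from Section~\ref{sec:betweenness}) to a pair of dual Dedekind-complete dense linear orders without endpoints, and such an order is uncountable (it embeds a copy of the reals, or more simply: a countable dense linear order without endpoints is isomorphic to $\langle\Rat,<\rangle$, which is not Dedekind complete). This contradiction shows no such $\Phi$ exists.

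Alternatively — and this is perhaps the cleaner route — I would lean on the L\"owenheim--Skolem argument already sketched in the surrounding text together with the observation that pure-formula definability of an \emph{elementary-closure-friendly} kind forces first-order definability on the relevant level, or at least forces the existence of countable models. The key step is to nail down precisely \emph{why} pure $\Hbatex$-definability would yield a countable member of the class. This is where a small amount of care is needed: the satisfaction operators and $\Ex$ must be handled in the completeness/canonical-model construction, but since $\Ex\varphi$ is definable as $\at_i(\varphi)$-style quantification over nominals (or directly, $\Ex\varphi \leftrightarrow \bigvee$-type behaviour captured by the $\mathtt{Paste}$/$\mathtt{Name}$ machinery), the canonical model is still countable whenever the language is countable, exactly as in the proof of Theorem~\ref{th:completeness-for-Bh}. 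One then invokes: (i) the basic hybrid system with $\Ex$, $\at$, nominals, plus any set of pure axioms, is strongly complete for the class those axioms define, with countable models for countable inputs; (ii) hence the class has a countable member if it is consistent (i.e. nonempty); (iii) but $\CDLBWE$ is nonempty (it contains $\frR$) yet has no countable member.

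The main obstacle I anticipate is step (i) in the form needed here: one must be sure that adding \emph{arbitrary} pure $\Hbatex$-formulas (not just the specific geometric ones) still yields a complete logic whose canonical model is built from named, pasted maximal consistent sets in a countable language. For the pure $\Hbat$ case this is \cite[Section 7.3]{Blackburn-et-al-ML} verbatim; for $\Hbatex$ one needs the analogous treatment of the existential modality, which is standard (the $\Ex$ operator is itself handled by a paste-style rule or is outright definable from nominals and $\at$ in the presence of the $\mathtt{Name}$ rule), but it should be cited or sketched rather than asserted. Once that is in place the rest is immediate: the countable canonical frame validates all the pure axioms, hence lies in the (supposedly) defined class $\CDLBWE$, contradicting the uncountability of every Dedekind-complete dense linear order without endpoints. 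I would present the write-up as: assume pure definability; build the countable canonical model; observe it lies in $\CDLBWE$; derive a contradiction with Dedekind completeness plus countability; conclude.
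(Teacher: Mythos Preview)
Your proposal is correct, but it takes a somewhat different route from the paper's own argument. The paper dispatches the theorem in essentially two lines of text preceding it: since pure $\Hbatex$-formulas contain no propositional variables, their standard translation (nominals as first-order variables, $\at_i$ and $\Ex$ as first-order quantifier patterns) yields genuine first-order sentences, so any class they define is elementary; then L\"owenheim--Skolem gives a countable member, contradicting the fact that every frame in $\CDLBWE$ is uncountable. Your argument reaches the same contradiction---a countable frame in a class with only uncountable members---but via the hybrid canonical-model machinery (automatic completeness for pure axioms, yielding a countable named model) rather than via the first-order correspondence and downward L\"owenheim--Skolem.

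What each approach buys: the paper's route is shorter and requires nothing beyond the observation that pure formulas are first-order on the frame level; your route is more ``internal'' to the hybrid-logic development of the paper and reuses the completeness apparatus already built for $\Bh$, at the cost of having to extend that apparatus to handle $\Ex$ (which you rightly flag as the one nontrivial step). Both are valid; the paper's is the more economical proof, while yours has the virtue of not stepping outside the modal/hybrid framework at all.
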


However, we can express D-completeness by means of a formula containing the $\Ex$ operator and propositional variables. Precisely, as the axioms of $\Bh$ locate us in a subclass of $\LBWE$, Theorem \ref{th:E-elimination-in-LBWE} allows us to express this formula in a definitional extension of $\Lb$.  Therefore, the axiomatic system which results from adding the axiom \eqref{ax:D} below to $\Bh$, has the operator $\Ex$ as an abbreviation couched in the language $\calL_{\beta}$ (the more so in $\Hbat$). Let $\All\defeq\neg\Ex\neg$. The modal Dedekind completeness axiom is the following formula
\begin{equation}\tag{$\mathtt{D}$}\label{ax:D}
\begin{split}
\Ex\Conv p\land\Ex\Conv q\land \All(\Conv p\to p)\land\All(\Conv q\to q&)\land \neg\Ex(p\land q)\rarrow\\&\Ex(\Bmod(p,q)\land\neg\Conv p\land \neg\Conv q)\,.
\end{split}
\end{equation}
The axiom embodies the following intuition. Suppose there exist a pair of distinct points satisfying $p$ and a pair of distinct points satisfying $q$ (this is postulated by $\Ex\Conv p\land\Ex\Conv q$). Since all points between any pair of $p$-points satisfy $\Conv p$, $\All(\Conv p\rarrow p)$ postulates that the set of all $p$-points is an interval (i.e., a convex set). Analogously, the set of all $q$-points is an interval too. By $\neg\Ex(p\land q)$ the two intervals are disjoint. Shortly, the antecedent of \eqref{ax:D} says: there is a pair of non-empty disjoint intervals (thus one of them being entirely to the one side of another). Then, there is a point that is located between a $p$-point and a $q$-point, but not between any pair of $p$-points, nor any pair of $q$-points. Briefly, there is a point between the two intervals from the antecedent.

\pagebreak

\begin{lemma}
    \eqref{ax:D} is valid in the class $\CDLBWE$.
\end{lemma}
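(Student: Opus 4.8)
The plan is to unwind the semantics of \eqref{ax:D} into the second-order statement \eqref{B9} and then invoke that a frame in $\CDLBWE$ satisfies \eqref{B9} by definition. Fix a frame $\frF=\langle W,B\rangle\in\CDLBWE$, a model $\frM=\langle\frF,V\rangle$ on it, and a world $w_0$ satisfying the antecedent of \eqref{ax:D}. Put $X\defeq V(p)$ and $Y\defeq V(q)$. First I would observe that $\Ex\Conv p$ forces $X$ to contain at least two points (since $\Conv p$ holds somewhere means $\Bmod(p,p)$ holds somewhere, so there are $x_1,x_2\in X$ with $B(x_1,\cdot,x_2)$, and by \eqref{B1} these are distinct); likewise $Y$ has at least two points. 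Next, $\All(\Conv p\to p)$ says exactly that $X$ is convex: any point between two $X$-points satisfies $\Conv p$, hence $p$, hence lies in $X$; similarly $Y$ is convex. And $\neg\Ex(p\wedge q)$ gives $X\cap Y=\emptyset$.

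The second step is to produce the ``separating point between the two sides'' hypothesis needed to apply \eqref{B9}. Since $X$ and $Y$ are disjoint nonempty convex sets in a linear betweenness frame, one of them lies entirely on one side of the other; concretely I would argue that there is a point $v$ with $B(v,x,y)$ for all $x\in X$, $y\in Y$ — i.e. $v$ sits ``beyond'' all of $X$ on the side away from $Y$. To get such a $v$: pick any $x^{*}\in X$ and $y^{*}\in Y$; by \eqref{B7} there is a point $v$ with $B(v,x^{*},v')$ for some $v'$; using convexity of $X$, disjointness, linearity \eqref{B6}, and the transitivity axioms \eqref{B4}, \eqref{B5} (and their primed variants in the appendix), one checks that $v$ can be chosen so that every $X$-point lies between $v$ and every $Y$-point. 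This is the step I expect to be the main obstacle — it is the genuinely geometric part, a short case analysis on the relative position of an arbitrary $x\in X$ (resp. $y\in Y$) with respect to $x^{*}$, $y^{*}$, $v$, using linearity to split into cases and the transitivity axioms to chain betweenness facts; the intuition is exactly the picture that $X$ is an interval sitting to the left of the interval $Y$, so any point far enough to the left of $X$ works.

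With $v$ in hand, the antecedent of \eqref{B9} (with these $X$, $Y$) is satisfied, so \eqref{B9} yields a point $u$ with $B(x,u,y)$ for all $x\in X$, $y\in Y$. The third step is to verify that $u$ witnesses the consequent of \eqref{ax:D}, i.e. $\frM,u\Vdash\Bmod(p,q)\wedge\neg\Conv p\wedge\neg\Conv q$. The first conjunct is immediate: taking any $x\in X$, $y\in Y$ we have $B(x,u,y)$ with $x\Vdash p$, $y\Vdash q$, so $u\Vdash\Bmod(p,q)$. For $\neg\Conv p$: if $u\Vdash\Conv p$ there would be $x_1,x_2\in X$ with $B(x_1,u,x_2)$; but also $B(x_1,u,y)$ for any $y\in Y$, and by \eqref{B3} together with the projection/transitivity lemmas this forces $u$ to be collinear with and between $X$-points while simultaneously lying between an $X$-point and the disjoint set $Y$, and one derives that $u$ would have to coincide with a point of $X$ or of $Y$ or violate \eqref{B3} — in any case a contradiction with $X\cap Y=\emptyset$ and \eqref{B1}. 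The argument for $\neg\Conv q$ is symmetric. Hence $u$ satisfies the consequent, so $w_0\Vdash\eqref{ax:D}$, and since $w_0$ was arbitrary, $\frM\Vdash\eqref{ax:D}$; as $\frM$ was arbitrary over $\frF$, $\frF\Vdash\eqref{ax:D}$. I would package the collinearity bookkeeping in the middle two steps by citing the elementary betweenness lemmas from the appendix (notably Lemma~\ref{lem:proj-for-B} and the transitivity variants) rather than reproving them.
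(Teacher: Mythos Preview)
Your second step contains a genuine gap. You want a point $v$ with $B(v,x,y)$ for all $x\in X=V(p)$ and all $y\in Y=V(q)$, but such a $v$ need not exist: nothing in the antecedent of \eqref{ax:D} forces $V(p)$ or $V(q)$ to be bounded. Concretely, in $\langle\Real,B_{<}\rangle$ take $V(p)=(-\infty,0)$ and $V(q)=(1,+\infty)$; the antecedent of \eqref{ax:D} holds, yet there is no $v$ to the left of all of $V(p)$, so the hypothesis of \eqref{B9} fails for the pair $(X,Y)=(V(p),V(q))$ and you cannot invoke \eqref{B9} as stated. Your proposed construction (pick $v$ via \eqref{B7} and then push it out by transitivity) cannot succeed here, since every candidate $v$ has $p$-points on both sides.

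The paper's proof fixes exactly this by \emph{truncating}: it chooses specific $x_0\in V(p)$ and $y_0\in V(q)$ (one of the $x_i$'s, $y_i$'s witnessing $\Conv p$, $\Conv q$, picked on the correct sides) and applies \eqref{B9} only to
\[
X_0\defeq V(p)\cap\{u\mid B(x_0,u,y_0)\}\qtand Y_0\defeq V(q)\cap\{u\mid B(x_0,u,y_0)\}\,.
\]
These are bounded by construction, so a $z$ with $B(z,x_0,y_0)$ (from Lemma~\ref{lem:exists-to-one-side}) does witness $(\forall a\in X_0)(\forall b\in Y_0)\,B(z,a,b)$ after a short case analysis, and \eqref{B9} then yields $z_0$. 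The price is that $z_0$ is only known to lie between all of $X_0$ and $Y_0$, not all of $V(p)$ and $V(q)$, so your step~3 argument no longer applies verbatim; the paper instead shows $B(x_0,z_0,y_0)$ (via Lemma~\ref{lem:for-D-soundness}), whence $z_0\in V(p)$ would force $z_0\in X_0$, contradicting \eqref{B1}. Note also that you are overcomplicating the $\neg\Conv p$ part: since $\All(\Conv p\to p)$ is in the antecedent, $z_0\Vdash\Conv p$ would immediately give $z_0\in V(p)$, so it suffices to show $z_0\notin V(p)\cup V(q)$ --- no projection lemmas needed at that point.
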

\begin{proof}
    Take an arbitrary $\frF\in\CDLBWE$, a~point $w\in\frF$, and a valuation $V$. Suppose that
    \[
        w\Vdash \Ex\Conv p\land\Ex\Conv q\land \All(\Conv p\to p)\land\All(\Conv q\to q)\land \neg\Ex(p\land q)\,.
    \]
    Let $x$ and $y$ be such that $x\Vdash\Conv p$ and $y\Vdash\Conv q$. In consequence, there are $x_1,x_2$ and $y_1,y_2$ such that
    \[
            B(x_1,x,x_2)\tand \{x_1,x_2\}\subseteq V(p) \qquad B(y_1,y,y_2)\tand \{y_1,y_2\}\subseteq V(q)\,.
    \]
    By the assumptions, we have that ($\dagger$) $V(p)\cap V(q)=\emptyset$ and that both sets are convex. That is, if $a,b\in V(p)$, then for any $u$ such that $B(a,u,b)$ we obtain $u\in V(p)$ (and similarly for $V(q)$). Indeed, if $\{a,b\}\subseteq V(p)$ and $u$ is between the two points, then $u\Vdash\Conv p$, and so by $\All(\Conv p\rarrow p)$ we get that $u\in V(p)$.

    Take $x_0\in V(p)$ such that $x$ is between $x_0$ and an element of (equivalently, all elements of) $V(q)$, and $y_0\in V(q)$ such that $y$ is between $y_0$ and en element of (again, equivalently, all elements of) $V(p)$. To be more precise, $x_0$ can be one of $x_1$ and $x_2$, and $y_0$ one of $y_1$ and $y_2$.

    To see the former, suppose $v\in V(q)$. Then from $V(p)\cap V(q)=\emptyset$ we get $\#(x,x_1,v)$ and $\#(x,x_2,v)$. For $x_1$, from \eqref{B6} we have $B(x_1,x,v)$ or $B(x_1,v,x)$ or $B(x,x_1,v)$. Since $x\Vdash p$ by convexity of $V(p)$, then from $B(x_1,v,x)$ we obtain $v\Vdash p$, which contradicts ($\dagger$). If $B(x_1,x,v)$, then we take $x_0\defeq x_1$. If $B(x,x_1,v)$, then from $B(x_1,x,x_2)$ and \eqref{B2} and \eqref{B4} we get $B(x_2,x,v)$. So we put $x_0\defeq x_2$. For $y_0$ the argument is similar.

    Let
    \begin{align*}
        X_0&{}\defeq V(p)\cap\{u\in W\mid B(x_0,u,y_0)\}\\
        Y_0&{}\defeq V(q)\cap\{u\in W\mid B(x_0,u,y_0)\}\,.
    \end{align*}
    The choice of $x_0$ and $y_0$ guarantees that both sets are non-empty (infinite, actually, in light of density and convexity). Since $x_0\neq y_0$, Lemma \ref{lem:exists-to-one-side} entails the existence of $z\in W$ such that $B(z,x_0,y_0)$. Fix arbitrary $a\in X_0$ and  $b\in Y_0$. Since $\#(z,a,b)$, we obtain
\[
B(z,a,b)\vee B(z,b,a)\vee B(a,z,b)
\]
by linearity.

If $B(a,z,b)$, then from $B(x_0,a,y_0)$ and $B(x_0,b,y_0)$ and Lemma \ref{lem:for-D-soundness} we get $B(x_0,z,y_0)$, a contradiction to $B(z,x_0,y_0)$ and \eqref{B3}.

Suppose that $B(z,b,a)$. From  $B(z,x_0,y_0)$, $B(x_0,a,y_0)$, and \eqref{B5} we obtain $B(z,x_0,a)$. Since $b\notin V(p)$, by the convexity of $V(p)$ and $x_0,a\in V(p)$ we have that $B(x_0,b,a)$ cannot be true, and since $\#(x_0,b,a)$, we have that either $B(b,x_0,a)$ or $B(b,a,x_0)$ from \eqref{B6} and \eqref{B2}. From $B(z,b,a)$ via application of \eqref{B4} and \eqref{B5} in both cases we have $B(z,b,x_0)$. Then from $B(z,x_0,y_0)$ and \eqref{B2}, \eqref{B5} we have $B(b,x_0,y_0)$, which contradicts $B(x_0,b,y_0)$ by \eqref{B2} and \eqref{B3}.

Therefore the third possibility holds, and by arbitrariness of $a$ and $b$ we conclude that $(\forall a\in X_0)(\forall b\in Y_0)\,B(z,a,b)$. By \eqref{B9}, there is a $z_0\in W$ such that $(\forall a\in X_0)(\forall b\in Y_0)\,B(a,z_0,b)$. Thus, $z_0\Vdash\Bmod(p,q)$, and by irreflexivity of $B$, $z_0\notin X_0\cup Y_0$. Moreover, observe that
     \[
        B(x_0,x,y_0)\wedge B(x_0,y,y_0)\wedge B(x,z_0,y)
     \]
     so by Lemma~\ref{lem:for-D-soundness} we obtain $B(x_0,z_0,y_0)$. In consequence $z_0\notin V(p)\cup V(q)$, and therefore $z_0\notin V(\Conv p)\cup V(\Conv q)$.
\end{proof}

\subsection{Completeness of \texorpdfstring{$\Bhp$}{Bhp}}

For a fixed $\Bhp$-consistent set of sentences $\Gamma$, the canonical model $\frM^\Gamma$ yielded by it satisfies all the axioms of $\Bh$, and thus the canonical frame $\frF^\Gamma$---on which $\frM^\Gamma$ is based---validates the axioms by Lemma~\ref{Lemma:model:to:frame}. Since the frame being named is countable, it is isomorphic to $\frQ=\langle\Rat,B_<\rangle$. \eqref{ax:D} is of course globally true in $\frM^\Gamma$, yet it cannot be valid on $\frF^\Gamma$, as otherwise $\frF^\Gamma$, and so $\frQ$, would be D-complete.

Thus, to show completeness of $\Bhp$ with respect to $\CDLBWE$ we need to expand the domain of $\frF^\Gamma$ with new elements that will guarantee D-completeness. However, the expansion must be made in such a way that betweenness holds among new points, and the truth lemma is preserved for the new model. So in particular, the truth lemma for the old ``rational'' points is not affected by the expansion.

Due to the isomorphism between $\frF^\Gamma$ and $\frQ$, we can tag the points of the former frame with rational numbers: $\{\Gamma_a\mid a\in\Rat\}$ in such a way that $B^\Gamma(\Gamma_a,\Gamma_b,\Gamma_c)$ if and only if $B_{<}(a,b,c)$. In the event, for the sake of simplicity below we replace $\frF^\Gamma$ with $\frQ$.

Unless declared otherwise, we will use the initial letters of the Latin alphabet: $a$, $b$, $c$ and $d$ (indexed, if necessary) to range over the set of rationals. The letters from the end of the Latin alphabet, $x$, $y$, and $z$, will be used to range over the whole set of real numbers.

If $x$ is an irrational number and $A\subseteq\Rat$, then we write $x\lessdot A$ to indicate that $x$ is an element of the minimal completion of $A$ (i.e., the subset of the reals that completes $A$). When we mention an interval as a subset of $\Rat$, we mean the set of rational numbers in the interval.

\begin{definition} Given a rational based model $\frM\defeq\langle \frQ,V\rangle$ and an irrational number $x$, we call a formula $\phi$ \emph{local at} $x$, if there are $a,a'\in\Rat$ such that $x\lessdot (a,a')$ and $(a,a')\subseteq V(\phi)$.
\end{definition}
The idea is that $\phi$ is local at $x$, if there is an interval $(a,a')\subseteq\Rat$ such that $\phi$ is satisfied by every its point. In principle, we want that if all the rational points that are in the vicinity of $x$ make $\phi$ true, then $\phi$ should also be true at~$x$.

For a fixed irrational number $x$, let us define the following three sets
\begin{align*}
\Lambda_x&{}\defeq\{\phi \mid \phi \mbox{ is local at }x\}\\
\Sigma_x&{}\defeq\{\langle B\rangle(\phi,\psi)\mid\mbox{there are }a,a'\mbox{ such that }a\Vdash \phi, a'\Vdash\psi\mbox{ and }x\lessdot(a,a')\}\\
\Pi_x&{}\defeq\{\neg\langle B\rangle(\phi,\psi)\mid \langle B\rangle(\phi,\psi)\notin\Sigma_x\}\,.
\end{align*}

\begin{definition}
    The logic $\Bhp$ is \emph{sound} with respect to a model $\frM$, if all theorems of $\Bhp$ are globally true in $\frM$.
\end{definition}

\begin{lemma}\label{lem:4-properties} Let $\frM\defeq\langle \frQ,V\rangle$ be a rational based model. Then,
    \begin{enumerate}[label=(\roman*),itemsep=0pt]
       \item For every $x$, $\Sigma_x\subseteq\Lambda_x$.
       \item For every $x$, $\Lambda_x$ is closed for conjunctions.
       \item For each nominal $i$, $\neg i$ is local.
       \item If $\Bhp$ is sound w.r.t. $\frM$, then for every $x$ and every $\phi\in\Lambda_x$, $\nvdash_{\Bhp}\phi\rarrow\bot$, so $\Lambda_x$ is $\Bhp$-consistent.
    \end{enumerate}
\end{lemma}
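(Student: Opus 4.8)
The plan is to verify the four items more or less independently, with (iv) as the only one requiring real work. For item (i), suppose $\langle B\rangle(\phi,\psi)\in\Sigma_x$, so there are rationals $a,a'$ with $a\Vdash\phi$, $a'\Vdash\psi$ and $x\lessdot(a,a')$. I would like to produce a rational interval around $x$ every point of which forces $\langle B\rangle(\phi,\psi)$; the natural candidate is (a subinterval of) $(a,a')$ itself, since any rational $c\in(a,a')$ has $a$ on one side and $a'$ on the other, i.e.\ $B_{<}(a,c,a')$, hence $c\Vdash\langle B\rangle(\phi,\psi)$ by the clause \eqref{df:Bmod}. One must be slightly careful that $x\lessdot(a,a')$ is inherited by the relevant subinterval, but since $x$ is irrational and $(a,a')$ already straddles it, we may take $a,a'$ themselves (or pick rationals strictly between $a$ and $x$, and between $x$ and $a'$, if a proper open neighbourhood is wanted); either way $\langle B\rangle(\phi,\psi)\in\Lambda_x$.

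For item (ii), if $\phi,\psi\in\Lambda_x$ then there are rational intervals $(a,a')\subseteq V(\phi)$ and $(b,b')\subseteq V(\psi)$ with $x\lessdot(a,a')$ and $x\lessdot(b,b')$; intersecting them gives a rational interval still containing $x$ in its completion and contained in $V(\phi)\cap V(\psi)=V(\phi\wedge\psi)$, so $\phi\wedge\psi\in\Lambda_x$. For item (iii): a nominal $i$ is true at exactly one point of $\frQ$, namely a single rational, so $V(\neg i)$ is all but one rational; since $x$ is irrational it is not that excluded point, and one can choose a rational interval around $x$ missing it, witnessing $\neg i\in\Lambda_x$ (here ``$\neg i$ is local'' should be read as ``local at every irrational $x$'', matching how the three preceding clauses are phrased).

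Item (iv) is the main obstacle. Assume $\Bhp$ is sound with respect to $\frM$ and, for contradiction, that $\vdash_{\Bhp}\phi\rarrow\bot$ for some $\phi\in\Lambda_x$, i.e.\ $\vdash_{\Bhp}\neg\phi$. By soundness, $\neg\phi$ is globally true in $\frM$, so $\phi$ is true at no point of $\frQ$, i.e.\ $V(\phi)=\emptyset$. But $\phi\in\Lambda_x$ means $(a,a')\subseteq V(\phi)$ for some rationals $a<x<a'$, and such an open rational interval is non-empty; contradiction. Hence $\nvdash_{\Bhp}\phi\rarrow\bot$ for every $\phi\in\Lambda_x$. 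To conclude $\Lambda_x$ is $\Bhp$-consistent one then combines this with item (ii): any finite subset $\phi_1,\dots,\phi_n\in\Lambda_x$ has conjunction $\phi_1\wedge\dots\wedge\phi_n\in\Lambda_x$, which by the above is not $\Bhp$-refutable, so no finite subset of $\Lambda_x$ is inconsistent, whence $\Lambda_x$ itself is $\Bhp$-consistent. The only subtlety to watch is that closure under conjunction (item (ii)) is genuinely needed to pass from ``no single local formula is refutable'' to ``$\Lambda_x$ is consistent'', so the items should be proved in the order (ii) before (iv), or (iv) should explicitly cite (ii).
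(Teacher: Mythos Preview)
Your proposal is correct and follows essentially the same approach as the paper: for (i) the interval $(a,a')$ itself witnesses locality of $\langle B\rangle(\phi,\psi)$, for (ii) one intersects the two witnessing intervals, for (iii) one uses that $V(i)$ is a single rational distinct from the irrational $x$, and for (iv) one picks a rational in the witnessing interval and derives $b\in V(\bot)$ from soundness. Your explicit remark that closure under conjunction from (ii) is needed to pass from non-refutability of each $\phi\in\Lambda_x$ to consistency of $\Lambda_x$ is a helpful addition that the paper leaves implicit in the phrase ``so $\Lambda_x$ is $\Bhp$-consistent''.
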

\begin{proof}
    Ad i. If $a\Vdash\phi$ and $a'\Vdash\psi$, then all the points between $a$ and $a'$ must satisfy $\Bmod(\phi,\psi)$. Therefore if $x\lessdot(a,a')$, then $\Bmod(\phi,\psi)$ is local at $x$.

    \smallskip

    Ad ii. Suppose both $\phi$ and $\varphi$ are local at $x$, in which case there are rational numbers $a,a',b,b'$ such that $x\lessdot(a,a')\subseteq V(\phi)$ and $x\lessdot(b,b')\subseteq V(\varphi)$. Then
    \[
    x\lessdot (a,a')\cap (b,b')\subseteq V(\phi)\cap V(\varphi)=V(\phi\wedge\varphi)
\,,    \]
    and so $\phi\wedge\varphi$ must be local at $x$.

    \smallskip

    Ad iii. This follows from the fact that each nominal is true at precisely one rational point. So, given $i$, suppose that $V(i)=\{a\}$. Thus for any irrational number $x$, as either $a<x$ or $x<a$, there is an interval $(b,b')$ around $x$ at every point of which $\neg i$ holds.

    \smallskip

    Ad iv. Suppose there are a point $x$ and a formula $\phi$ local at $x$ such that $\vdash_{\Bhp}\phi\rarrow\bot$. By locality there are rationals $a<a'$ such that $x\lessdot(a,a')\subseteq V(\phi)$. Pick a rational $b\in(a,a')$. By soundness, $b\in V(\phi\rarrow\bot)$, and thus $b\in V(\bot)$, a~contradiction.
\end{proof}

\begin{lemma}\label{lem:CCp->Cp}
    The formula $\Conv\Conv p\rarrow\Conv p$ is valid in $\LBWE$.
\end{lemma}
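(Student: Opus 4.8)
The plan is to unfold the definitions of $\Conv$ and $\Bmod$, pass to the linear order induced by $B$, and finish with a short $\min$/$\max$ argument. Fix a model $\frM\defeq\langle\frF,V\rangle$ with $\frF\in\LBWE$ and a world $w$ with $\frM,w\Vdash\Conv\Conv p$. Since $\Conv\varphi$ abbreviates $\Bmod(\varphi,\varphi)$, unfolding the semantics of $\Bmod$ twice yields points $u,v$ with $B(u,w,v)$ and $\frM,u\Vdash\Conv p$, $\frM,v\Vdash\Conv p$, and then points $u_1,u_2$ with $u_1,u_2\Vdash p$ and $B(u_1,u,u_2)$, and points $v_1,v_2$ with $v_1,v_2\Vdash p$ and $B(v_1,v,v_2)$. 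The goal is to exhibit two $p$-points with $w$ strictly between them, which is exactly $\frM,w\Vdash\Bmod(p,p)$, i.e.\ $\frM,w\Vdash\Conv p$.

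Next I would invoke the correspondence recalled in Section~\ref{sec:betweenness}: since $\frF\in\LBWE$, the relation $B$ coincides with $B_<$ for a suitable linear order $<$ on $W$ with $\langle W,<\rangle\in\LOWE$ (one of the two dual orders $<_B$). Using the symmetry axiom \eqref{B2}, which makes $B$ genuinely unoriented, we may assume without loss of generality that $u<w<v$; the case $v<w<u$ is symmetric, obtained by swapping the roles of $u,u_1,u_2$ with $v,v_1,v_2$. From $B(u_1,u,u_2)$, in either of the two possibilities $u_1<u<u_2$ or $u_2<u<u_1$ the $<$-smaller of $u_1,u_2$ lies strictly below $u$; set $a\defeq\min_{<}\{u_1,u_2\}$, so that $a\Vdash p$ and $a<u<w$. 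Symmetrically, set $b\defeq\max_{<}\{v_1,v_2\}$, so $b\Vdash p$ and $w<v<b$.

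It then remains only to conclude: $a<w<b$, so $a,w,b$ are pairwise distinct and $B_<(a,w,b)$, that is $B(a,w,b)$; since $a\Vdash p$ and $b\Vdash p$, this gives $\frM,w\Vdash\Bmod(p,p)$, hence $\frM,w\Vdash\Conv p$. As $\frM$ and $w$ were arbitrary, $\Conv\Conv p\rarrow\Conv p$ is valid on every $\frF\in\LBWE$.

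I do not expect a genuine obstacle here; the only points requiring attention are the reduction by symmetry, which really does use \eqref{B2}, and the distinctness side-conditions implicit in $B$ via \eqref{B1}, which are automatic because $a<w<b$ forces $a,w,b$ distinct and $B(u_1,u,u_2)$ already forces $u\notin\{u_1,u_2\}$ (so $a\neq u$ and hence $a<u$ strictly). One could instead run the argument directly with the $B$-axioms via a case analysis using the projection lemma (Lemma~\ref{lem:proj-for-B}) together with outer and inner transitivity, but passing to the induced order trivialises the bookkeeping, so that is the route I would take.
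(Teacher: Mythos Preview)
Your argument is correct. Both you and the paper begin identically, unfolding the semantics to obtain $u,v$ with $B(u,w,v)$ and $p$-points $u_1,u_2,v_1,v_2$ with $B(u_1,u,u_2)$ and $B(v_1,v,v_2)$. The divergence is in how the conclusion is extracted: the paper stays inside the betweenness language and appeals to a dedicated combinatorial lemma (Lemma~\ref{lem:for-CCp->Cp} in the appendix), which shows directly from the $B$-axioms that $w$ lies between some pair drawn from $\{u_1,u_2\}\times\{v_1,v_2\}$; that lemma is in turn proved by a short case analysis using \eqref{B2}, \eqref{B4}, \eqref{B4'}, \eqref{B6}. You instead invoke the $\LBWE$/$\LOWE$ correspondence from Section~\ref{sec:betweenness}, pass to the induced order $<$, and finish with a two-line $\min/\max$ argument. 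Your route is shorter and avoids all the bookkeeping, at the cost of importing the (well-known but not proved in the paper) fact that $B=B_{<_B}$; the paper's route is self-contained within the betweenness axiomatics, which is presumably why it was preferred there. You anticipated this alternative in your final paragraph, and indeed that case analysis is exactly what the paper does.
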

\begin{proof}
Fix an $\frF\in\LBWE$, an arbitrary point $x$ of the frame, and an arbitrary valuation $V$. Suppose $\langle\frF,V\rangle,x\Vdash\Conv\Conv p$. Then there are points $y$ and $z$ such that $B(y,x,z)$ and both satisfy $\Conv p$. So, there are points $y_0,y_1$ and $z_0,z_1$ such $B(y_0,y,y_1)$ and $B(z_0,z,z_1)$ and all four points satisfy $p$. In consequence, by Lemma~\ref{lem:for-CCp->Cp}, $x$ is between a pair of points that satisfy $p$, and thus $\langle\frF,V\rangle,x\Vdash\Conv p$, as required.
\end{proof}

\begin{lemma}\label{lem:for-completeness-of-R}
    Fix a rational based model $\frM\defeq\langle\frQ,V\rangle$ with respect to which $\Bhp$ is sound. If there is a natural number $n$ such that the formula
    \[
\Bmod(\phi_1,\psi_1)\vee\ldots\vee \Bmod(\phi_n,\psi_n)
    \]
    is local at $x$, then there exists $i_x\leqslant n$ such that $\Bmod(\phi_{i_x},\psi_{i_x})\in\Sigma_x$.
\end{lemma}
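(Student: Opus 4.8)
The plan is to argue by contradiction. Assume $\Bmod(\phi_i,\psi_i)\notin\Sigma_x$ for every $i\leqslant n$; I will contradict soundness of $\Bhp$, the relevant axiom being \eqref{ax:D}. Since $\frM$ is rational based, each $V(\phi_i)$ and $V(\psi_i)$ is a set of rationals, and $x\lessdot(a,a')$ just says that the irrational $x$ lies strictly between the rationals $a$ and $a'$. Unwinding the definition of $\Sigma_x$, the assumption $\Bmod(\phi_i,\psi_i)\notin\Sigma_x$ means that no $\phi_i$-point and $\psi_i$-point lie on opposite sides of $x$; a short case analysis turns this into: for each $i$, either $V(\phi_i)=\emptyset$, or $V(\psi_i)=\emptyset$, or $V(\phi_i)\cup V(\psi_i)\subseteq(-\infty,x)$, or $V(\phi_i)\cup V(\psi_i)\subseteq(x,\infty)$. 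In the first two cases $V(\Bmod(\phi_i,\psi_i))=\emptyset$, so those disjuncts contribute nothing; the remaining indices split (they cannot do both at once, for then $V(\phi_i)=V(\psi_i)=\emptyset$) into a ``left'' set $I^-$ and a ``right'' set $I^+$. Because a point strictly between two points on the same side of $x$ is again on that side, $\bigcup_{i\in I^-}V(\Bmod(\phi_i,\psi_i))\subseteq(-\infty,x)$ and $\bigcup_{i\in I^+}V(\Bmod(\phi_i,\psi_i))\subseteq(x,\infty)$.

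Next I would use locality: fix rationals $a<x<a'$ with every rational of $(a,a')$ satisfying $\Bmod(\phi_1,\psi_1)\vee\ldots\vee\Bmod(\phi_n,\psi_n)$. Splitting $(a,a')\cap\Rat$ at the irrational $x$, the rationals in $(a,x)$ can only be covered by the ``left'' disjuncts (the ``right'' ones miss that half entirely, the discarded ones cover nothing), and symmetrically for $(x,a')$; in particular $I^-$ and $I^+$ are both nonempty. Writing $\Phi^-\defeq\bigvee_{i\in I^-}\Bmod(\phi_i,\psi_i)$ and $\Phi^+\defeq\bigvee_{i\in I^+}\Bmod(\phi_i,\psi_i)$, this gives $(a,x)\cap\Rat\subseteq V(\Phi^-)\subseteq(-\infty,x)$ and $(x,a')\cap\Rat\subseteq V(\Phi^+)\subseteq(x,\infty)$.

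The key move is to feed into \eqref{ax:D} not $\Phi^\pm$ but their convexifications: put $p\defeq\Conv\Phi^-$ and $q\defeq\Conv\Phi^+$. For any formula $\zeta$ the set $V(\Conv\zeta)$ is convex in $\Rat$ — a point strictly between two points, each of which is strictly between two $\zeta$-points, is again strictly between two $\zeta$-points — so $V(p)$ and $V(q)$ are intervals; moreover $(a,x)\cap\Rat\subseteq V(p)\subseteq(-\infty,x)$ and $(x,a')\cap\Rat\subseteq V(q)\subseteq(x,\infty)$, and, since $\Conv\Conv r\to\Conv r$ is valid in $\LBWE$ by Lemma~\ref{lem:CCp->Cp} and hence globally true in $\frM$, also $(a,x)\cap\Rat\subseteq V(\Conv p)\subseteq V(p)$ and likewise for $q$. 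Thus the substitution instance of \eqref{ax:D} obtained by replacing $p$ with $\Conv\Phi^-$ and $q$ with $\Conv\Phi^+$ has a globally true antecedent in $\frM$: the two $\Ex$-conjuncts because $(a,x)\cap\Rat$ and $(x,a')\cap\Rat$ are nonempty, the two $\All$-conjuncts because $V(\Conv p)\subseteq V(p)$ and $V(\Conv q)\subseteq V(q)$, and $\neg\Ex(p\land q)$ because $V(p)$ and $V(q)$ sit on opposite sides of $x$. Soundness of $\Bhp$ then yields the consequent $\Ex(\Bmod(p,q)\land\neg\Conv p\land\neg\Conv q)$, so there is a rational $z$ with $z\nVdash\Conv p$, $z\nVdash\Conv q$, together with $s\in V(p)$ and $t\in V(q)$ such that $z$ lies between $s$ and $t$; since $s<x<t$ this forces $s<z<t$, and $z\neq x$. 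If $z<x$, any rational $r$ with $\max(a,z)<r<x$ lies in $(a,x)\cap\Rat\subseteq V(p)$ and satisfies $s<z<r$, so $z\Vdash\Conv p$ — contradicting $z\nVdash\Conv p$; the case $z>x$ is symmetric, which finishes the argument.

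I expect the genuine obstacle to be conceptual rather than computational. The naive pigeonhole — ``since the disjunction holds on a whole interval around $x$, some single disjunct holds on a subinterval, hence is local at $x$'' — is \emph{false}: one disjunct can hold on every rational just below $x$ and a different one on every rational just above $x$, with neither local at $x$ and neither in $\Sigma_x$. So a completeness property is genuinely needed, and the right tool is \eqref{ax:D}; the accompanying trick is to pass from $\Phi^\pm$ to $\Conv\Phi^\pm$ so that the interval side condition $\All(\Conv p\to p)$ of \eqref{ax:D} becomes available — which is exactly where Lemma~\ref{lem:CCp->Cp} enters. Everything else (the case analysis yielding $I^\pm$, the density manipulations, and the final straddling argument) is routine.
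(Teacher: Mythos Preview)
Your argument is correct. The case analysis (if $\Bmod(\phi_i,\psi_i)\notin\Sigma_x$ and both $V(\phi_i),V(\psi_i)$ are nonempty, then $V(\phi_i)\cup V(\psi_i)$ lies entirely on one side of $x$) is sound, the bounds $(a,x)\cap\Rat\subseteq V(\Conv\Phi^-)\subseteq(-\infty,x)$ follow as you indicate, the antecedent of the substitution instance of \eqref{ax:D} is verified, and the straddling contradiction at the end goes through.

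Your route differs genuinely from the paper's. The paper argues by induction on~$n$: it inspects one endpoint $a$ of the witnessing interval, picks a single index $i_a$ with $a\Vdash\Bmod(\phi_{i_a},\psi_{i_a})$, forms $L\defeq V(\phi_{i_a}\vee\psi_{i_a})$, and takes $s\defeq\sup L$. If $s<x$ it drops the $i_a$-disjunct on a shorter interval and invokes the induction hypothesis; only in the residual case where the sup from the left and the inf from the right both coincide with~$x$ does it fire \eqref{ax:D}, and then with the single-index formulas $\Conv(\phi_{i_a}\vee\psi_{i_a})$ and $\Conv(\phi_{i_{a'}}\vee\psi_{i_{a'}})$. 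You instead classify \emph{all} indices simultaneously into $I^-$ and $I^+$, take the aggregated disjunctions $\Phi^\pm$, and apply \eqref{ax:D} once to $\Conv\Phi^-$ and $\Conv\Phi^+$. This eliminates the induction and the $\sup/\inf$ bookkeeping entirely, and makes transparent exactly where completeness is needed: the only obstruction is that $x$ separates the left contributors from the right ones, which is precisely what \eqref{ax:D} forbids. The paper's approach, by contrast, is more incremental and in the ``easy'' branches produces the witness $i_x$ directly without appealing to \eqref{ax:D}; but since the lemma only asserts existence, that extra constructivity is not needed.
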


\begin{proof}
    We prove the lemma via induction on $n$. For $n=1$, let us assume that $x\lessdot(a,a')$ and $(a,a')\subseteq V\left(\Bmod(\phi,\psi)\right)$. Fix a rational $b\in (a,a')$. We have $b\Vdash\Bmod(\phi,\psi)$. Let $b_1$ be such that $b_1\Vdash\phi$, and w.l.o.g. assume that $b_1<x$. If there is a $b_2$ such that $x<b_2$ and $b_2\Vdash\psi$, then $\Bmod(\phi,\psi)\in\Sigma_x$. Otherwise, for all $c$ such that $x<c$ it is the case that $c\nVdash\psi$. But ($\ddagger$) $(x,a')\subseteq V(\Bmod(\phi,\psi))$, so there must be a $d>x$ such that $d\Vdash\phi$. Thus there are points on both sides of $x$ that satisfy $\phi$. However, ($\ddagger$) entails the existence of a rational that satisfies $\psi$ and this rational is either to the left or to the right of $x$. In consequence, there are rationals between which $x$ is located and such that one of them satisfies $\phi$ and the other $\psi$. Thus $\Bmod(\phi,\psi)\in\Sigma_x$, as required.

      Suppose now that the result holds for $n$ and assume that there are rationals $a$ and $a'$ such that
    \[
    x\lessdot[a,a']\subseteq V\left(\bigvee_{i\leqslant n+1}\Bmod(\phi_i,\psi_i)\right)
    \]
    (for convenience, we take closed intervals instead of open ones, and we can always do this due to the density of $\Rat$ in $\Real$).

    Thus, for $a$ there is $i_a\leqslant n+1$ such that $a\Vdash\Bmod(\phi_{i_a},\psi_{i_a})$. So there are rational numbers $a_1$ and $a_2$ such that $a_1\Vdash\phi_{i_a}$ and $a_2\Vdash\psi_{i_a}$ and $a$ is between the two, say $a_1<a<a_2$. Clearly, $(a_1,a_2)\subseteq V(\Bmod(\phi_{i_a},\psi_{i_a}))$.

    Clearly, $a_1<x$. If $x<a_2$, then $x\in(a_1,a_2)$, and $\Bmod(\phi_{i_a},\psi_{i_a})\in\Sigma_x$. So consider the case in which $a_2<x$. Let us take the set $L\defeq\{b\mid b\Vdash\phi_{i_a}\vee\psi_{i_a}\}$.

    If $x$ is not its upper bound, then there is a $b_1>x$ satisfying $\phi_{i_a}\vee\psi_{i_a}$. If $b_1\Vdash\psi_{i_a}$, then $x$ is between rational points satisfying $\phi_{i_a}$ and $\psi_{i_a}$, respectively, so $\Bmod(\phi_{i_a},\psi_{i_a})\in\Sigma_x$. If $b_1\Vdash\phi_{i_a}$, then since $a_2<x$ and $a_2$ satisfies $\psi_{i_a}$ we are in the same situation.

    If $x$ is an upper bound of $L$, then by completeness of the real line, there is an $s\defeq\sup L$. Consider the case when $s<x$. Then, there is an $a_s\in\Rat$ such that $s<a_s<x$. Observe that $(a_s,a')\cap V(\Bmod(\psi_{i_a},\phi_{i_a}))=\emptyset$, since no point to the right of $a_s$ can satisfy $\phi_{i_a}$ or $\psi_{i_a}$. Then
    \[
        x\lessdot(a_s,a')\subseteq V\left(\bigvee_{\substack{i\leqslant n+1\\ i\neq i_a}}\Bmod(\phi_i,\psi_i)\right)
    \]
    By the induction hypothesis, there is an $i_x\leqslant n+1$ and $i_x\neq i_a$ such that $\Bmod(\phi_{i_x},\psi_{i_x})\in\Sigma_x$.

    The other case is when $s=x$. Firstly, we look at $a'$ for which there is $i_{a'}$ such that $a'\Vdash V(\Bmod(\psi_{i_{a'}},\phi_{i_{a'}}))$. We proceed analogously as in the case of $a$ this time putting in focus the set $U\defeq\{c\mid c\Vdash \psi_{i_{a'}}\vee\phi_{i_{a'}}\}$. When $x$ is not its lower bound, we argue that $\Bmod(\psi_{i_{a'}},\phi_{i_{a'}})\in\Sigma_x$ similarly to the situation of $L$. Otherwise, $x$ is its lower bound, the set has the infimum $t\defeq\inf U$. If $x<t$, then we proceed as in the previous paragraph.

    If $t=x$, then we get that $x$ is both the supremum of $L$ and the infimum of~$U$. As $L$ and $U$ are sets of rationals, $L\cap U=\emptyset$, and their convex hulls must be disjoint too. Consider now two formulas, $\Conv(\phi_{i_a}\vee\psi_{i_a})$ and $\Conv(\phi_{i_{a'}}\vee\psi_{i_{a'}})$. The extension of the former is the convex hull of $L$, and of the latter the convex hull of $U$. Thus $\neg\Ex(\Conv(\phi_{i_a}\vee\psi_{i_a})\wedge\Conv(\phi_{i_{a'}}\vee\psi_{i_{a'}}))$. Clearly, $\Ex\Conv(\phi_{i_a}\vee\psi_{i_a})$ and $\Ex\Conv(\phi_{i_{a'}}\vee\psi_{i_{a'}})$. By Lemma~\ref{lem:CCp->Cp}, $\Conv\Conv p\rarrow\Conv p$ is valid in $\langle\Rat,B_{<}\rangle$, and so $\All(\Conv\Conv(\phi_{i_a}\vee\psi_{i_a})\rarrow \Conv(\phi_{i_a}\vee\psi_{i_a}))$ and $\All(\Conv\Conv(\phi_{i_{a'}}\vee\psi_{i_{a'}})\rarrow\Conv(\phi_{i_{a'}}\vee\psi_{i_{a'}}))$. By the soundness assumption and by \eqref{ax:D}, there is a rational number $a^\ast$ such that
    \[
    a^\ast\Vdash\Bmod(\Conv(\phi_{i_a}\vee\psi_{i_a}),\Conv(\phi_{i_{a'}}\vee\psi_{i_{a'}}))\wedge\neg \Conv\Conv(\phi_{i_a}\vee\psi_{i_a})\wedge\neg\Conv\Conv(\phi_{i_{a'}}\vee\psi_{i_{a'}})\,.
    \]
    Consider the case in which $a^\ast<x$. The point $a^\ast$ is between points $a^\ast_1$ and $a^\ast_2$ that satisfy $\Conv(\phi_{i_a}\vee\psi_{i_a})$ and $\Conv(\phi_{i_a'}\vee\psi_{i_a'})$, respectively. Since $a^\ast_2$ is to the right of $x$, $a^\ast_1<a^\ast$. But $x$ is the supremum, so there must be $a^\ast_3$ such that $a^\ast<a^\ast_3<x$ and $a^\ast_3\Vdash\Conv(\phi_{i_a}\vee\psi_{i_a})$. This entails that $a^\ast\Vdash \Conv\Conv(\phi_{i_a}\vee\psi_{i_a})$, which is a contradiction. The case when $x<a^\ast$ is similar.
\end{proof}

\begin{corollary}
    If $\frM\defeq\langle \frQ, V\rangle$ is a model with respect to which $\Bhp$ is sound and $x$ is an irrational number, then there are no $\varphi,\phi_1,\psi_1,\ldots,\phi_n,\psi_n$ such that $\varphi\in\Lambda_x$ and $\neg\Bmod(\phi_i,\psi_i)\in\Pi_x$ for all $i\leqslant n$ and $\varphi\rarrow\bigvee_{i\leqslant n}\Bmod(\phi_i,\psi_i)$ is a theorem of $\Bhp$. In consequence, $\Lambda_x\cup\Pi_x$ is $\Bhp$-consistent.
\end{corollary}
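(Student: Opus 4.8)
The plan is to read the first assertion straight off Lemma~\ref{lem:for-completeness-of-R} together with the soundness hypothesis, and then to deduce $\Bhp$-consistency of $\Lambda_x\cup\Pi_x$ by a routine compactness-plus-propositional-logic argument that exploits the conjunction-closure of $\Lambda_x$ recorded in Lemma~\ref{lem:4-properties}(ii).

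First I would argue by contradiction: suppose there are $\varphi,\phi_1,\psi_1,\ldots,\phi_n,\psi_n$ with $\varphi\in\Lambda_x$, $\neg\Bmod(\phi_i,\psi_i)\in\Pi_x$ for every $i\leqslant n$, and $\vdash_{\Bhp}\varphi\to\bigvee_{i\leqslant n}\Bmod(\phi_i,\psi_i)$. Since $\Bhp$ is sound with respect to $\frM$, that implication is globally true in $\frM$. As $\varphi$ is local at $x$, fix rationals $a,a'$ with $x\lessdot(a,a')$ and $(a,a')\subseteq V(\varphi)$; global truth of the implication then forces $(a,a')\subseteq V\bigl(\bigvee_{i\leqslant n}\Bmod(\phi_i,\psi_i)\bigr)$, so the disjunction $\bigvee_{i\leqslant n}\Bmod(\phi_i,\psi_i)$ is itself local at $x$. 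Now Lemma~\ref{lem:for-completeness-of-R} supplies an index $i_x\leqslant n$ with $\Bmod(\phi_{i_x},\psi_{i_x})\in\Sigma_x$, which directly contradicts $\neg\Bmod(\phi_{i_x},\psi_{i_x})\in\Pi_x$, since by definition $\Pi_x$ collects exactly the negations of members of $\Sigma^{\mathsf c}_x$, i.e.\ $\neg\Bmod(\phi_{i_x},\psi_{i_x})\in\Pi_x$ says precisely $\Bmod(\phi_{i_x},\psi_{i_x})\notin\Sigma_x$. (The vacuous case $n=0$, where the disjunction is $\bot$, is already excluded by Lemma~\ref{lem:4-properties}(iv).)

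For the ``in consequence'' clause I would reason contrapositively. If $\Lambda_x\cup\Pi_x$ were $\Bhp$-inconsistent, then some finite subset $\{\varphi_1,\ldots,\varphi_m\}\cup\{\neg\Bmod(\phi_1,\psi_1),\ldots,\neg\Bmod(\phi_n,\psi_n)\}$, with each $\varphi_j\in\Lambda_x$ and each $\neg\Bmod(\phi_i,\psi_i)\in\Pi_x$, would be $\Bhp$-inconsistent. By Lemma~\ref{lem:4-properties}(ii) the conjunction $\varphi\defeq\varphi_1\wedge\cdots\wedge\varphi_m$ lies again in $\Lambda_x$ (taking $\varphi\defeq\top$, local at $x$ via any interval around $x$, when $m=0$), and plain propositional rewriting of the inconsistency yields $\vdash_{\Bhp}\varphi\to\bigvee_{i\leqslant n}\Bmod(\phi_i,\psi_i)$. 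If $n\geqslant 1$ this contradicts the first assertion; if $n=0$ it reads $\vdash_{\Bhp}\varphi\to\bot$ for a formula local at $x$, contradicting Lemma~\ref{lem:4-properties}(iv). Either way we reach a contradiction, so $\Lambda_x\cup\Pi_x$ is $\Bhp$-consistent. I do not expect a genuine obstacle here: the whole weight of the statement has been carried by Lemma~\ref{lem:for-completeness-of-R}, and the only points needing a moment's care are the translation of a finite inconsistent set into an implication of exactly the stated shape and the two degenerate cases (the empty conjunction drawn from $\Lambda_x$, and $n=0$), both dispatched by Lemma~\ref{lem:4-properties}(ii) and~(iv).
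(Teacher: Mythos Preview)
Your proof is correct and follows essentially the same approach as the paper: use locality of $\varphi$ together with soundness to place an interval around $x$ inside $V\bigl(\bigvee_{i\leqslant n}\Bmod(\phi_i,\psi_i)\bigr)$, invoke Lemma~\ref{lem:for-completeness-of-R} to extract some $\Bmod(\phi_{i_x},\psi_{i_x})\in\Sigma_x$, and contradict membership of its negation in $\Pi_x$. Your treatment of the ``in consequence'' clause (finite inconsistency, conjunction-closure via Lemma~\ref{lem:4-properties}(ii), and the degenerate cases via Lemma~\ref{lem:4-properties}(iv)) is more explicit than the paper, which simply asserts the consistency without spelling out this routine reduction.
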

\begin{proof}
    Suppose there are such formulas for which $\vdash_{\Bhp}\varphi\rarrow\bigvee_{i\leqslant n}\Bmod(\phi_i,\psi_i)$. By the locality of $\varphi$ at $x$ and by the soundness, there are rational numbers $a$ and $a'$ such that $x\lessdot (a,a')\subseteq V(\varphi)\subseteq V\left(\bigvee_{i\leqslant n}\Bmod(\phi_i,\psi_i)\right)$. By Lemma~\ref{lem:for-completeness-of-R}, there is an $i_x\leqslant n$ for which $\Bmod(\phi_{i_x},\psi_{i_x})$ is in $\Sigma_x$. This entails that $\neg\Bmod(\phi_{i_x},\psi_{i_x})\notin\Pi_x$, a~contradiction.
\end{proof}

\begin{remark}
  By the corollary, for any irrational number $x$, $\Lambda_x\cup\Pi_x$ has
  a~maximal $\Bhp$-consistent extension $\Gamma_x$ (which is not named by Lemma~\ref{lem:4-properties}(iii), and we do not require it to be pasted either). We want to use $\Gamma_x$'s to expand the frame $\frFG$ in such a way as to obtain a frame that is isomorphic to $\frR\defeq\langle\Real,B_{<}\rangle$. Since it may happen that for different irrationals $x$ and $y$, the sets $\Gamma_x$ and $\Gamma_y$ coincide, we think about the irrational numbers as placeholders to store these sets. To be formally precise, the new points of the frame could be pairs $\langle\Gamma_x,x\rangle$. To avoid the notation cluttering we avoid this, and we always think about $\Gamma_x$ and $\Gamma_y$ as different points in the case $x\neq y$.
  \end{remark}

Let $\Gamma$ be a $\Bhp$-consistent set of sentences. The named model $\frM^{\Gamma}$ yielded by $\Gamma$ is countable, and all axioms of $\Bh$ are globally true in it. Therefore, $\frF^{\Gamma}$ is isomorphic to $\frQ$.  We expand $\frF^\Gamma$ to the frame $\frF^\Gamma_+$ such that
\begin{enumerate}[label=(\roman*),itemsep=0pt]
    \item the domain $W^{\Gamma}_{+}$ of $\frF^\Gamma_+$ is $W^\Gamma$ extended with all the $\Gamma_x$'s where $x$ is an irrational number,
    \item $\BGp(\Gamma_x,\Gamma_y,\Gamma_z)\Iffdef x<y<z\text{\ or\ }z<x<y$ (for all reals $x,y,z$),
\end{enumerate}
and we turn the frame into a model $\frM^{\Gamma}_{+}\defeq\langle\frFGp,\VGp\rangle$ in the standard way,
\[
\Gamma_x\in\VGp(p)\Iffdef p\in\Gamma_x\,.
\]
For nominals, $\VGp(i)=\VG(i)$, as these by Lemma~\ref{lem:4-properties}(iii) can only be true at rational indexed sets:
\[
\Gamma_x\in \VGp(i)\Iff \Gamma_x\in\VG(i)\Iff x\in\Rat\text{\ and\ } i\in\Gamma_x\,.
\]
It is easy to see that the restriction of $\frMGp$ to $\frFG$ is the same as the rational canonical model $\frMG$ yielded by $\Gamma$.

\begin{lemma}[Truth Lemma for $\frMGp$]\label{lem:truth-lemma+}
    For any formula $\phi$ and any number $x\in\Real$
    \[
        \frMGp,\Gamma_x\Vdash\phi\quad\text{iff}\quad\phi\in\Gamma_x\,.
    \]
\end{lemma}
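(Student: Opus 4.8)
The plan is an induction on the complexity of $\phi$, with all the real work concentrated in the case $\phi=\Bmod(\psi,\chi)$ at an irrational index; everything else is bookkeeping on top of the Truth Lemma for the rational canonical model $\frMG$ and the hybrid axioms. For $\phi=\top$ and $\phi=p\in\Var$ there is nothing to prove, since $\frMGp,\Gamma_x\Vdash p$ iff $\Gamma_x\in\VGp(p)$ iff $p\in\Gamma_x$ by the definition of $\VGp$. For $\phi=i\in\Nom$: if $x\in\Rat$ this is the displayed description of $\VGp(i)$, and if $x$ is irrational both sides are false, because $\Gamma_x\notin\VGp(i)$ by definition while $\neg i\in\Lambda_x\subseteq\Gamma_x$ by Lemma~\ref{lem:4-properties}(iii), so $i\notin\Gamma_x$. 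The cases $\phi=\neg\psi$ and $\phi=\psi\wedge\chi$ follow from the inductive hypothesis and maximal consistency of $\Gamma_x$. For $\phi=\at_i\psi$, the point $\VGp(i)=\VG(i)$ is a singleton whose element is a rational point, say $\Gamma_a$, so $\frMGp,\Gamma_x\Vdash\at_i\psi$ iff $\frMGp,\Gamma_a\Vdash\psi$ iff $\psi\in\Gamma_a$ by the inductive hypothesis; and $\at_i\psi\in\Gamma_x$ iff $\psi\in\Gamma_a$ --- for rational $x$ by the usual hybrid computation using Lemma~\ref{Lemma:yield}, and for irrational $x$ because satisfaction statements are rigid, so one of $\at_i\psi$ and $\neg\at_i\psi$ (the latter being $\at_i\neg\psi$) holds at every rational point of $\frMG$, is therefore constant-true near $x$, and hence lies in $\Lambda_x\subseteq\Gamma_x$. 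Since $\Ex$ and $\All$ occur in the axioms only as $\calL_\beta$-abbreviations, justified on all relevant frames by Theorem~\ref{th:E-elimination-in-LBWE}, they require no separate clause.

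The engine for the modal case is an \emph{approximation fact}: for any formula $\delta$ and any real $y$, if $\delta\in\Gamma_y$ then the rationals $a$ with $\delta\in\Gamma_a$ accumulate at $y$. Indeed, $\delta\in\Gamma_y$ forces $\neg\delta\notin\Gamma_y\supseteq\Lambda_y$, so $\neg\delta$ is not local at $y$; hence every rational interval around $y$ contains a rational $a$ with $a\Vdash\delta$ in $\frMG$, which by the Truth Lemma for $\frMG$ means $\delta\in\Gamma_a$. In particular, if a real $y$ lies strictly below (resp.\ above) a point $x$, then some \emph{rational} $a$ with $\delta\in\Gamma_a$ also lies strictly below (resp.\ above) $x$; this is all one needs in order to trade irrational semantic witnesses for rational ones.

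Now take $\phi=\Bmod(\psi,\chi)$. If $x\in\Rat$: by the Truth Lemma for $\frMG$, $\Bmod(\psi,\chi)\in\Gamma_x$ iff $\frMG,\Gamma_x\Vdash\Bmod(\psi,\chi)$, which by Lemma~\ref{lem:existence} is witnessed by rational points $\Gamma_b,\Gamma_c$ with $\BGp(\Gamma_b,\Gamma_x,\Gamma_c)$, $\psi\in\Gamma_b$, $\chi\in\Gamma_c$, whence $\frMGp,\Gamma_x\Vdash\Bmod(\psi,\chi)$ by the inductive hypothesis; conversely, from a (possibly irrational) satisfying pair of witnesses the approximation fact yields rational witnesses on the two sides of $x$, so $\frMG,\Gamma_x\Vdash\Bmod(\psi,\chi)$ and hence $\Bmod(\psi,\chi)\in\Gamma_x$. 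If $x$ is irrational: since $\Sigma_x\subseteq\Lambda_x\subseteq\Gamma_x$ by Lemma~\ref{lem:4-properties}(i), while $\neg\Bmod(\psi,\chi)\in\Pi_x\subseteq\Gamma_x$ whenever $\Bmod(\psi,\chi)\notin\Sigma_x$, consistency of $\Gamma_x$ gives the pivotal identity $\Bmod(\psi,\chi)\in\Gamma_x$ iff $\Bmod(\psi,\chi)\in\Sigma_x$. Unwinding $\Sigma_x$, and using the symmetry theorem \eqref{MB2} to also cover the reversed orientation of the endpoints, this says precisely: there are rationals $a<x<a'$ with $\psi\in\Gamma_a$ and $\chi\in\Gamma_{a'}$. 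Given such $a,a'$, the inductive hypothesis together with $\BGp(\Gamma_a,\Gamma_x,\Gamma_{a'})$ gives $\frMGp,\Gamma_x\Vdash\Bmod(\psi,\chi)$; conversely, from real witnesses on the two sides of $x$ the approximation fact delivers rational ones, placing $\Bmod(\psi,\chi)$ in $\Sigma_x$, hence in $\Gamma_x$.

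The main obstacle is exactly this last step: the set $\Gamma_x$ is defined somewhat ad hoc, and one must check that it decides betweenness formulas the way the intended real geometry does --- the identity $\Bmod(\psi,\chi)\in\Gamma_x$ iff $\Bmod(\psi,\chi)\in\Sigma_x$ --- and one must rule out the a priori possibility that a satisfying pair of witnesses is irreducibly irrational, which the approximation fact dispatches cheaply using only that $\Lambda_x\subseteq\Gamma_x$ is consistent. Beyond that, every case reduces through the inductive hypothesis to facts already in hand for the rational canonical model $\frMG$.
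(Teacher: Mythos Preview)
Your argument is correct and follows essentially the same route as the paper's: both pivot on replacing possibly irrational witnesses by rational ones via the observation that $\neg\delta$ cannot be local at $y$ when $\delta\in\Gamma_y$, and both reduce the irrational $\Bmod$-case to the identity $\Bmod(\psi,\chi)\in\Gamma_x\Leftrightarrow\Bmod(\psi,\chi)\in\Sigma_x$ drawn from $\Sigma_x\cup\Pi_x\subseteq\Gamma_x$. One small imprecision: your ``approximation fact'' as stated (accumulation at $y$) only makes sense for irrational $y$, since $\Lambda_y$ is undefined otherwise and, e.g., a nominal true at a rational $y$ does not accumulate there---but you only ever use the weaker consequence (a rational witness on the correct side of $x$), which for rational $y$ is immediate by taking $a=y$. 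You are in fact more explicit than the paper in handling the $\at_i$-clause and in invoking \eqref{MB2} for the reversed orientation of witnesses; the paper glosses over both.
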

\begin{proof}
    The basic cases for propositional variables and for nominals hold by the definitions and remarks above, and the Boolean connectives are handled in the standard way.

    Consider the formula $\phi\defeq\Bmod(\varphi,\psi)$, and suppose $\frMGp,\Gamma_x\Vdash\phi$. We divide the argument into two cases.

    \smallskip

    (1st case) Let $x$ be a rational number. Let $\Gamma_y$ and $\Gamma_z$ be points such that $\BGp(\Gamma_y,\Gamma_x,\Gamma_z)$, $\Gamma_y\Vdash\varphi$ and $\Gamma_z\Vdash\psi$. By the induction hypothesis $\varphi\in\Gamma_y$ and $\psi\in\Gamma_z$. W.l.o.g., assume that $y<x<z$.

    Observe that there must exist a rational number $a<x$ with $\varphi\in\Gamma_a$. For suppose there is no such rational, which in particular means that $y\notin\Rat$. Therefore, for a sufficiently small interval of rationals around $y$, all its points $a$ are such that $\neg\varphi\in\Gamma_a$. But then, $\neg\varphi$ is local at  $y$, and so $\neg\varphi\in\Gamma_y$, which is a contradiction. Analogously, there must be a rational number $b>x$ with $\psi\in\Gamma_b$. This means that $B^\Gamma(\Gamma_a,\Gamma_x,\Gamma_b)$, and so $\Bmod(\varphi,\psi)\in\Gamma_x$.

    For the other direction, suppose that $\Bmod(\varphi,\psi)\in\Gamma_x$. As $\Gamma_x$ is a point in $\frMG$, by Lemma~\ref{lem:existence} there are rationals $a$ and $b$ such that $\varphi\in\Gamma_a$ and $\psi\in\Gamma_b$ and $B^\Gamma(\Gamma_a,\Gamma_x,\Gamma_b)$. But these also hold in $\frMGp$, so by the induction hypothesis we obtain $\frMGp,\Gamma_a\Vdash\varphi$ and $\frMGp,\Gamma_b\Vdash\psi$ and $\BGp(\Gamma_a,\Gamma_x,\Gamma_b)$. Therefore $\frMGp,\Gamma_x\Vdash\Bmod(\varphi,\psi)$.

    \smallskip

    (2nd case) Let $x$ be an irrational number. If $\Gamma_x\Vdash\langle B\rangle(\varphi,\psi)$, then by the same argument as in the rational case, we can find rational numbers $a<x<b$ such that $\varphi\in\Gamma_{a}$ and $\psi\in\Gamma_{b}$. By the definition of $\Sigma_x$, we have that $\Bmod(\varphi,\psi)\in\Sigma_x\subseteq\Gamma_x$.

    Now assume $\Bmod(\varphi,\psi)\in\Gamma_x$. Since $\Sigma_x\cup\Pi_x$ contains all formulas of the form $\Bmod(\delta_1,\delta_2)$, and since $\Sigma_x\cup\Pi_x\subseteq\Gamma_x$, it must be the case that $\Bmod(\varphi,\psi)$ is an element of $\Sigma_x$ (if not, its negation would be in $\Pi_x$, and so in $\Gamma_x$, which contradicts the consistency of the latter set). In consequence, by the definition of $\Sigma_x$, there are $\Gamma_a$ and $\Gamma_b$ such that $\BGp(\Gamma_a,\Gamma_x,\Gamma_b)$ and $\frMGp,\Gamma_a\Vdash\varphi$ and $\frMGp,\Gamma_b\Vdash\psi$.
    Therefore, $\frMGp,\Gamma_x\Vdash\Bmod(\varphi,\psi)$.

    \smallskip

    This completes the proof the lemma.
    \end{proof}

\begin{theorem}[Completeness Theorem for $\Bhp$]\label{th:completeness-for-Bhp}
    If $\Gamma$ is a $\Bhp$-consistent set of formulas, then there is a model of $\Gamma$ that is based on a frame $\frF\in\CDLBWE$.
\end{theorem}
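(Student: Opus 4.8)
The plan is to assemble the apparatus developed above. Write the given $\Bhp$-consistent set as $\Sigma$ (the statement denotes it by $\Gamma$, but I reserve $\Gamma$ for its Lindenbaum extension). First I would run the Extended Lindenbaum Lemma with $\Bhp$-consistency in place of $\Bh$-consistency---the construction is unchanged, since the rules \eqref{eq:Name} and \eqref{eq:Paste} are common to both---to obtain a named and pasted $\Bhp$-maximal consistent set $\Gamma\supseteq\Sigma$ in a countable expansion of $\Hbat$. Form the named model $\frMG=\langle\frFG,\VG\rangle$ yielded by $\Gamma$. As in the proof of Theorem~\ref{th:completeness-for-Bh}, $\frMG$ is countable and every axiom of $\Bh$ is globally true in it; since the betweenness axioms \eqref{HB1}--\eqref{HB7} and \eqref{HB8'} are pure, Lemma~\ref{Lemma:model:to:frame} yields that $\frFG$ validates them, whence $\frFG\in\DLBWE$ (using Theorem~\ref{th:relative-density} to pass from \eqref{HB8'} to density), and being countable $\frFG$ is isomorphic to $\frQ=\langle\Rat,B_<\rangle$. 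Via this isomorphism I identify $\frFG$ with $\frQ$ and tag its states $\{\Gamma_a\mid a\in\Rat\}$ so that $B^\Gamma(\Gamma_a,\Gamma_b,\Gamma_c)$ holds exactly when $B_<(a,b,c)$ does, as in the construction preceding Lemma~\ref{lem:truth-lemma+}.

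The next thing to observe is that $\Bhp$ is \emph{sound} with respect to $\frMG$: every state of $\frMG$ is a $\Bhp$-maximal consistent set, hence contains every $\Bhp$-theorem, and since the Truth Lemma for $\frMG$ appeals only to the axioms of $\Bh$ (all of which are $\Bhp$-theorems) it follows that every $\Bhp$-theorem is true at every state, i.e.\ globally true. With soundness in hand, the hypotheses of Lemma~\ref{lem:4-properties}, of Lemma~\ref{lem:for-completeness-of-R}, and of the corollary following the latter are all met for $\frMG$. That corollary then supplies, for each irrational $x$, a maximal $\Bhp$-consistent set $\Gamma_x\supseteq\Lambda_x\cup\Pi_x$, which is precisely the data used to build the expanded frame $\frFGp$ on domain $\{\Gamma_x\mid x\in\Real\}$ with $\BGp(\Gamma_x,\Gamma_y,\Gamma_z)$ iff $x<y<z\tor z<x<y$, together with the model $\frMGp=\langle\frFGp,\VGp\rangle$ defined as above.

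It then remains to invoke the Truth Lemma for $\frMGp$ (Lemma~\ref{lem:truth-lemma+}) and to identify the underlying frame. Since $\Gamma$ is named, it is a state of $\frMG$, say $\Gamma=\Gamma_a$ with $a\in\Rat$; as $\Sigma\subseteq\Gamma$, Lemma~\ref{lem:truth-lemma+} gives $\frMGp,\Gamma_a\Vdash\varphi$ for every $\varphi\in\Sigma$, so $\Sigma$ is satisfied in $\frMGp$. Finally, under the bookkeeping convention that distinct reals index distinct states, $\Gamma_x\mapsto x$ is a bijection of the domain of $\frFGp$ onto $\Real$ carrying $\BGp$ to $B_<$, so $\frFGp\cong\frR=\langle\Real,B_<\rangle$. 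Since $\langle\Real,<\rangle$ is a Dedekind complete dense linear order without endpoints, $\frR$ lies in $\CDLBWE$ by the order/betweenness correspondence of Section~\ref{sec:betweenness}. Hence $\Sigma$ has a model based on a frame from $\CDLBWE$, which is the assertion of the theorem (and, through the isomorphism, shows $\Bhp$ to be a logic of $\frR$).

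I do not anticipate a real obstacle at this stage. The delicate point---forcing Dedekind completeness by squeezing \eqref{ax:D} through the $\sup$/$\inf$ dichotomy---has already been handled inside Lemma~\ref{lem:for-completeness-of-R}, and the truth-transfer to the enlarged model is Lemma~\ref{lem:truth-lemma+}. The only genuine verifications left in the theorem itself are that $\Bhp$ is sound with respect to the (named, hence ``rational'') canonical model---so that the soundness-hypothesis lemmas apply---and the routine check that the enlarged frame is isomorphic to the real betweenness frame; neither should cause trouble.
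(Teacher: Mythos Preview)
Your proposal is correct and follows the same route as the paper: extend to a named and pasted $\Bhp$-MCS, form the named model $\frMG$ (isomorphic to $\frQ$), observe that all $\Bhp$-theorems---in particular \eqref{ax:D}---are globally true there so the soundness hypotheses of the preceding lemmas are met, build $\frMGp$ on the real-indexed points, and invoke Lemma~\ref{lem:truth-lemma+}. You are somewhat more explicit than the paper about why $\Bhp$ is sound for $\frMG$ and why $\frFGp\cong\frR$, but the argument is the same.
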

\begin{proof}
If $\Gamma$ is a $\Bhp$-consistent set, then its named and pasted maximal extension $\Gamma_a$ must correspond to a~rational number $a$, since there is a nominal $i\in\Gamma_a$, and by Lemma~\ref{lem:4-properties}(iii) no nominal is local at any irrational number. $\frMG$ is based on the frame $\frFG$ from the class $\DLBWE$. We have that $\frM^\Gamma,\Gamma_a\Vdash\Gamma$. Observe that axiom \eqref{ax:D} is globally true in $\frMG$. With the techniques described above, we extend $\frMG$ to $\frMGp$, which is based on the frame $\frFGp$ from the class $\CDLBWE$. By Lemma~\ref{lem:truth-lemma+}, for any formula $\phi\in\Gamma_a$, $\frMGp,\Gamma_a\Vdash\phi$. In particular, $\frMGp,\Gamma_a\Vdash\Gamma$, as required.
\end{proof}

\begin{corollary}
    Every $\Hbat$-formula valid on the frame $\frR=\langle\Real,B_<\rangle$ is a theorem of $\Bhp$.
\end{corollary}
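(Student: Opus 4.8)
The plan is to argue by contraposition, exactly paralleling the corollary that follows Theorem~\ref{th:completeness-for-Bh}. Suppose $\nvdash_{\Bhp}\varphi$; then $\{\neg\varphi\}$ is a $\Bhp$-consistent set of formulas. Following the proof of Theorem~\ref{th:completeness-for-Bhp}, I would extend it to a named and pasted $\Bhp$-maximal consistent set; being named, this set is the tag $\Gamma_a$ of a rational point of the canonical model $\frMG$ (which is based on $\frFG\cong\frQ$), and $\neg\varphi\in\Gamma_a$. Performing the $\Gamma_x$-expansion described in Section~\ref{sec:hybrid-logic-of-strict-betweenness} yields the model $\frMGp$ based on $\frFGp$, and by the Truth Lemma for $\frMGp$ (Lemma~\ref{lem:truth-lemma+}) we get $\frMGp,\Gamma_a\Vdash\neg\varphi$, so $\varphi$ is refuted in $\frMGp$.

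The key observation is that the frame $\frFGp$ produced by this construction is not merely some member of $\CDLBWE$ but is isomorphic to $\frR=\langle\Real,B_<\rangle$. Indeed, the domain $W^{\Gamma}_{+}$ is indexed by the whole real line---the rational indices coming from the points of $\frFG$ (which, being named and countable, is identified with $\frQ$) and the irrational indices coming from the freshly added sets $\Gamma_x$---and by clause (ii) of the definition of $\frFGp$ we have $\BGp(\Gamma_x,\Gamma_y,\Gamma_z)$ iff $x<y<z$ or $z<x<y$, that is, iff $B_<(x,y,z)$. Hence $\Gamma_x\mapsto x$ is a bijection of $W^{\Gamma}_{+}$ onto $\Real$ carrying $\BGp$ exactly to $B_<$; pushing the valuation $\VGp$ forward along this map produces a model on $\frR$ that refutes $\varphi$. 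Therefore $\varphi$ is not valid on $\frR$, and by contraposition every $\Hbat$-formula valid on $\frR$ is a theorem of $\Bhp$.

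I do not expect a genuine obstacle here: once Theorem~\ref{th:completeness-for-Bhp} is in hand, the argument is essentially bookkeeping. The only point that deserves care is that one must appeal to the \emph{specific} canonical construction rather than to an abstract ``$\frF\in\CDLBWE$'': unlike the countable case, where categoricity of $\DLOWE$ makes every countable frame in $\DLBWE$ isomorphic to $\frQ$, the class $\CDLBWE$ is not categorical (the long line, for instance, is Dedekind complete, dense, and without endpoints, yet not isomorphic to $\frR$), so one genuinely needs the fact that the model built in the proof of Theorem~\ref{th:completeness-for-Bhp} has the order type of the reals. That fact, however, is immediate from the definition of $\frFGp$ recalled above.
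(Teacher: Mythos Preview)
Your proposal is correct and follows essentially the same route as the paper's proof: contrapose, invoke the completeness construction to refute $\varphi$ in $\frMGp$, and observe that $\frFGp\cong\frR$. You actually supply more detail than the paper does---the paper simply asserts the isomorphism, whereas you spell out the bijection $\Gamma_x\mapsto x$ and (rightly) flag that one must appeal to the specific canonical construction rather than to abstract membership in $\CDLBWE$, since that class is not categorical.
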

\begin{proof}
    Suppose $\nvdash_{\Bhp}\varphi$, which means that $\{\neg\varphi\}$ is $\Bhp$-consistent. Thus by Theorem~\ref{th:completeness-for-Bhp} there is a model $\frMGp$ at which $\neg\varphi$ holds at some $\Gamma_x$. The model is based on the frame $\frFGp$ which is isomorphic to $\frR$. So $\varphi$ fails on a model based on $\frR$. In consequence, any formula valid in $\frR$ is a theorem of $\Bhp$.
\end{proof}

As is well-known, the real line $\langle\Real,<\rangle$ can be characterized up-to-isomorphism by the following conditions: it is a complete, dense linear order without endpoints which is separable, i.e., has a countable subset $D$ such every non-empty open interval has a common point with $D$. In light of mutual equivalence between strict orders and strict betweenness, this characterization can be also applied to $\frR=\langle\Real,B_{<}\rangle$. That is, the class $\SCDLBWE$ of those elements $\CDLBWE$ that meet the separability condition has $\frR$ as its only---up-to-isomorphism---element.

Let $\frI\defeq\langle [0,1],<\rangle$ be the standard binary frame on the closed interval $[0,1]$. Let us consider the product frame
\[
\frR\times\frI\defeq\langle \Real\times [0,1],<_{\ell}\rangle
\]
where $<_{\ell}$ is the lexicographic order on $\Real\times [0,1]$ obtained from $<$. As $\frR\times\frI$ is complete but not separable, the betweenness frame
\[
(\frR\times\frI)^{\ast}\defeq\langle\frR\times\frI,B_{<_{\ell}}\rangle
\]
is an element of $\CDLBWE$, which is not in $\SCDLBWE$. We can use this fact to show that separability is not expressible in the modal language extended with either satisfaction operators or existential modalities.

Given a class of frames $\Klass$ let $L(\Klass)$ be the logic of $\Klass$, i.e., the set of all formulas that are valid on every frame in $\Klass$. For a single frame $\frF$, let $L(\frF)\defeq L(\{\frF\})$. From the results obtained in this section, we know that
\[
L(\CDLBWE)=L(\frR)\,.
\]
Since $(\frR\times\frI)^{\ast}\in\CDLBWE$, it follows that $L(\frR)\subseteq L((\frR\times\frI)^{\ast})$. So, if there were a formula $\varphi$ characterizing separability of $\frR$, it would have to be in $L(\frR)$, and so in $L((\frR\times\frI)^{\ast})$, which is impossible.

\section{Summary and future work}

We have constructed two systems of hybrid logic, $\Bh$ and $\Bhp$, that are complete w.r.t. to the betweenness frame of rational numbers~$\frQ$, and the betweenness frame of the reals~$\frR$, respectively. In this way, we have shown the axiomatizability of the logic of strict betweenness in languages with $\at_i$-operators. It remains an open problem whether the logic of both frames in the basic modal similarity type (with $\Bmod$ as the only primitive operator) can be axiomatized. It seems that its solution will not be easy.

The techniques of this paper can be applied to study at least two more ternary relations. The first one is a~geometric \emph{equidistance} used by Mario Pieri \citep{Pieri-GEISNPS} to axiomatize Euclidean geometry by means of two primitives: \emph{points} and the aforementioned relation (see \citep{Gruszczynski-Pietruszczak-PS} for a modern presentation). The whole system is quite complicated (in the sense of the complexity of its axioms expressed in the primitives of the language only), but at least its fragments can be investigated from the hybrid logic perspective with a binary modal operator.

The second possibility is the hybrid approach to \emph{relative nearness} relation from \citep{vanBenthem-TLoT}. At first sight more topological than geometrical in nature, the notion is a very strong one and allows for the definition of Pieri's equidistance. The research in this direction could be then seen as a~more universal approach to ternary relations underlying geometrical structures.

\section*{Acknowledgments}
\begin{sloppypar}
Rafa\l{} Gruszczy\'{n}ski's work was funded by the National Science Center (Poland), grant number 2020/39/B/HS1/00216, ``Logico-philosophical foundations of geometry and topology''. Zhiguang Zhao's work was funded by Shandong Provincial Natural Science Foundation, China (project number: ZR2023QF021) and Taishan Young Scholars Program of the Government of Shandong Province, China (No.tsqn201909151).
\end{sloppypar}

\appendix

\section{Elementary properties of betweenness}

For completeness of the presentation in the appendix, we include the proofs of all those properties of betweenness that were used in the paper. The proofs themselves are not particularly illuminating, yet if the reader wishes to, they may check the correctness of our claims. All formulas are in open form, and what we prove are their universal closures.

Let us begin with a routine verification  that the following versions of \eqref{B4} and \eqref{B5} hold in every 3-frame satisfying \eqref{B2}, \eqref{B4}, and \eqref{B5}:
\begin{gather}
    B(x,y,z)\wedge B(y,z,u)\rarrow B(x,z,u)\,,\tag{B4$'$}\label{B4'}\\
    B(x,y,z)\wedge B(y,u,z)\rarrow B(x,u,z)\,.\tag{B5$'$}\label{B5'}
\end{gather}
\begin{proof}
\eqref{B4'} Assume that $B(x,y,z)$ and $B(y,z,u)$. Then we have $B(z,y,x)$ and $B(u,z,y)$ by \eqref{B2}. From these by \eqref{B4} we have $B(u,z,x)$, and by \eqref{B2} again we arrive at $B(x,z,u)$.

\eqref{B5'} Assume that $B(x,y,z)$ and $B(y,u,z)$. Then by \eqref{B5} we have $B(x,y,u)$. Together with $B(y,u,z)$ and \eqref{B4'} we obtain $B(x,u,z)$.
\end{proof}

\begin{lemma}\label{lem:proj-for-B}
   If $\frF\in\LBWE$, then
   \begin{align}
   \frF&{}\models B(a,x,y)\wedge B(a,x,w)\rarrow y=w\vee B(x,w,y)\vee B(x,y,w)\,,\\
   \frF&{}\models B(a,u,y)\wedge B(a,w,y)\rarrow u=w\vee B(a,u,w)\vee B(a,w,u)\,.
   \end{align}
\end{lemma}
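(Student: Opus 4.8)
The plan is to prove each implication by the same schema: assume the two betweenness hypotheses together with the negation of the first (equational) disjunct, read off pairwise distinctness from \eqref{B1}, apply linearity \eqref{B6} to the three relevant points, and then knock out the ``wrong'' disjunct of \eqref{B6} using \eqref{B2}, \eqref{B3}, \eqref{B4}, \eqref{B5}, leaving exactly the two desired disjuncts. I would start with the second conjunct, which is quicker. Assume $B(a,u,y)$, $B(a,w,y)$ and $u\neq w$. Then \eqref{B1} gives $a\neq u$, $a\neq w$, $a\neq y$, so $\#(a,u,w)$, and \eqref{B6} yields $B(a,u,w)\vee B(a,w,u)\vee B(u,a,w)$; it remains to exclude $B(u,a,w)$. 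If it held, then \eqref{B4} applied to $B(u,a,w)$ and $B(a,w,y)$ would give $B(u,a,y)$, hence by \eqref{B2} both $B(y,a,u)$ and (from $B(a,u,y)$) $B(y,u,a)$, contradicting \eqref{B3}. So $B(a,u,w)\vee B(a,w,u)$.

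For the first conjunct, assume $B(a,x,y)$, $B(a,x,w)$ and $y\neq w$. As before \eqref{B1} gives $\#(x,y,w)$, and \eqref{B6} yields $B(x,y,w)\vee B(x,w,y)\vee B(y,x,w)$, so I need to rule out $B(y,x,w)$. Suppose $B(y,x,w)$ holds; then, rewriting with \eqref{B2}, the facts $B(w,x,y)$, $B(w,x,a)$, $B(a,x,w)$, $B(y,x,a)$ are all available alongside $B(a,x,y)$, and \eqref{B1} makes $w,a,y$ pairwise distinct. Apply \eqref{B6} to $\#(w,a,y)$: $B(w,a,y)\vee B(w,y,a)\vee B(a,w,y)$. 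In each branch the same recipe closes the case: use \eqref{B5} (after a suitable \eqref{B2}-rewriting) to ``insert $x$'' into the chain, and then hit a clash via \eqref{B3}. If $B(a,w,y)$, then \eqref{B5} on $B(a,w,y)$ and $B(w,x,y)$ gives $B(a,w,x)$, which contradicts $B(a,x,w)$ by \eqref{B3}. If $B(w,a,y)$, then \eqref{B5} on $B(y,a,w)$ and $B(a,x,w)$ gives $B(y,a,x)$, which contradicts $B(y,x,a)$ by \eqref{B3}. If $B(w,y,a)$, then \eqref{B5} on $B(a,y,w)$ and $B(y,x,w)$ gives $B(a,y,x)$, which contradicts $B(a,x,y)$ by \eqref{B3}. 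Hence $B(y,x,w)$ is impossible and $B(x,y,w)\vee B(x,w,y)$ holds, completing the plan.

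The only delicate point is the nested \eqref{B6}-split inside the first conjunct: in each of the three branches one must pick exactly the right \eqref{B2}-variant of the available betweenness facts so that \eqref{B5} applies and its conclusion collides with another available fact through \eqref{B3}; everything else is bookkeeping with the symmetry axiom. I expect this three-case argument to be the bulk of the work, while the second conjunct and all the distinctness extractions are routine. (One could instead deduce both statements immediately from the induced linear order $<_B$ of Section~\ref{sec:betweenness} by elementary order arithmetic, but I would keep the direct argument so the appendix stays self-contained.)
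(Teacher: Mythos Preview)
Your proof is correct and follows essentially the same route as the paper's: both parts proceed by extracting pairwise distinctness from \eqref{B1}, applying \eqref{B6}, and eliminating the unwanted disjunct via a nested case split (in part~1) using \eqref{B2}, \eqref{B3}, \eqref{B5} (and \eqref{B4} in part~2). The only differences are cosmetic---you treat part~2 first and choose slightly different \eqref{B2}-variants in the three subcases---but the structure and the key steps match the paper's argument.
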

\begin{proof}
    Ad1. Suppose ($\dagger$) $B(a,x,y)$ and ($\ddagger$) $B(a,x,w)$ and let $y\neq w$. By \eqref{B1} we obtain $\#(x,w,y)$, so by \eqref{B6} we have that
    \[
        B(x,w,y)\vee B(x,y,w)\vee  B(w,x,y)\,.
    \]
    We need to eliminate the third disjunct, so assume towards a contradiction that $B(w,x,y)$. Since by \eqref{B1} $\#(a,w,y)$, by \eqref{B6} again we obtain
    \[
        B(a,w,y)\vee B(a,y,w)\vee B(w,a,y)\,.
    \]
    In consequence, we have three cases to consider.

    (1st) $B(a,w,y)$ and $B(w,x,y)$. Then by \eqref{B5} it holds that $B(a,w,x)$, which stands in contradiction to $(\ddagger)$ by \eqref{B3}.

    \smallskip

    (2nd) $B(a,y,w)$ and $B(w,x,y)$. By \eqref{B2} it is the case that $B(y,x,w)$, and thus $B(a,y,x)$ by \eqref{B5}. Yet this contradicts $(\dagger)$ by \eqref{B3}.

    (3rd) For the last case we observe that $B(w,a,y)$ cannot hold. Indeed, suppose otherwise. From the condition and from $(\dagger)$ by means of \eqref{B5} we infer that $B(w,a,x)$. By \eqref{B2} and $(\ddagger)$ we have that $B(w,x,a)$ and so we obtain a contradiction by \eqref{B2} again.

    This concludes the proof of the first part of the lemma.

    \smallskip

    Ad 2. Assume ($\dagger$) $B(a,u,y)$ and $(\ddagger)$ $B(a,w,y)$. Let $u\neq w$. So $\#(a,u,w)$ and so
    \[
        B(a,u,w)\vee B(a,w,u)\vee B(u,a,w)
    \]
    by \eqref{B6}. Similarly as in the first part of the proof, it is sufficient to show that the third disjunct cannot obtain. To show this, we again reason by contradiction, and we assume that $B(u,a,w)$. By \eqref{B2} we have that $B(w,a,u)$, and this together with $(\dagger)$ and \eqref{B4} yield $B(w,a,y)$. We apply \eqref{B2} to this and to $(\ddagger)$ to come up with $B(y,a,w)$ and $B(y,w,a)$, which contradict \eqref{B3}.
    \end{proof}

\begin{lemma}\label{lem:exists-to-one-side}
    If $\frF\in\LBWE$, then
    \[
    \frF\models w\neq x\rarrow(\exists u\,B(x,w,u)\wedge\exists v\,B(v,x,w))\,.
    \]
\end{lemma}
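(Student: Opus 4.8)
The plan is to derive both conjuncts of the consequent from a single auxiliary claim: \emph{if $B(a,w,c)$ and $x\neq w$, then $B(x,w,a)\vee B(x,w,c)$} --- intuitively, once $w$ lies between a pair of points, any point other than $w$ has $w$ between itself and at least one point of that pair. Granting this, the first conjunct $\exists u\,B(x,w,u)$ is immediate: apply \eqref{B7} to $w$ to obtain $a,c$ with $B(a,w,c)$, then the auxiliary claim (using $x\neq w$) supplies a witness $u\in\{a,c\}$. For the second conjunct I would run the auxiliary claim with the roles of $x$ and $w$ swapped: \eqref{B7} applied to $x$ gives $p,q$ with $B(p,x,q)$, whence the auxiliary claim yields $B(w,x,p)\vee B(w,x,q)$, and \eqref{B2} rewrites this as $B(p,x,w)\vee B(q,x,w)$, providing the required $v\in\{p,q\}$.

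To prove the auxiliary claim, fix $a,w,c$ with $B(a,w,c)$ --- so $\#(a,w,c)$ by \eqref{B1} --- and a point $x\neq w$, and argue by cases. If $x=a$ the second disjunct is just the hypothesis; if $x=c$ then $B(a,w,x)$, so $B(x,w,a)$ by \eqref{B2}. Otherwise $a,x,w$ are pairwise distinct, and \eqref{B6} gives $B(a,x,w)\vee B(a,w,x)\vee B(x,a,w)$. In the case $B(a,w,x)$, apply \eqref{B2} to get $B(x,w,a)$. In the case $B(x,a,w)$, combine it with $B(a,w,c)$ through \eqref{B4'} to obtain $B(x,w,c)$. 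The case $B(a,x,w)$ is the one that needs a moment's thought: applying \eqref{B2} to both hypotheses yields $B(w,x,a)$ and $B(c,w,a)$, and then \eqref{B5} gives $B(c,w,x)$, hence $B(x,w,c)$ by one more use of \eqref{B2}.

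The main obstacle is exactly that last sub-case: the obvious attempts to chain $B(a,x,w)$ with $B(a,w,c)$ via the outer transitivity axioms point in the wrong direction, and the fix is to symmetrise with \eqref{B2} first so that the inner transitivity axiom \eqref{B5} applies. Beyond this, the argument is routine bookkeeping with \eqref{B1}, \eqref{B2}, \eqref{B4'}, \eqref{B6}, and \eqref{B7}, all valid in $\LBWE$ (with \eqref{B4'} established in the appendix immediately before this lemma), so no further ingredients are needed.
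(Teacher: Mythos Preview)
Your proof is correct and follows the same overall strategy as the paper: apply \eqref{B7} to obtain a betweenness triple through the relevant point, then case-split via \eqref{B6} and chain with the transitivity axioms. The organization differs slightly. You factor out a reusable auxiliary claim and apply it twice (once to $w$, once to $x$), whereas the paper argues the first conjunct directly and dismisses the second as similar. More interestingly, your treatment of the sub-case $B(x,a,w)$ is a genuine streamlining: you chain $B(x,a,w)$ with $B(a,w,c)$ through \eqref{B4'} to obtain $B(x,w,c)$ immediately. The paper instead runs the \eqref{B6} trichotomy on \emph{both} endpoints $a$ and $b$, and is then forced to show that the residual combination $B(x,a,w)\wedge B(x,b,w)$ is contradictory. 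Your route eliminates that detour at the cost of using \eqref{B4'} (which the paper also has available but does not invoke here); the paper's route stays closer to the raw axioms but needs the extra contradiction argument.
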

\begin{proof}
    Assume $w\neq x$. By \eqref{B7} there are $a$ and $b$ such that ($\dagger$) $B(a,w,b)$. If either $a=x$ or $b=x$, we may conclude the proof. So suppose $a\neq x$ and $b\neq x$. Therefore by \eqref{B1} $\#(a,x,w)$ and $\#(b,x,w)$. So by \eqref{B6} we have:
    \[
        B(a,x,w)\qtor B(x,a,w)\qtor B(x,w,a)
    \]
    and
    \[
        B(b,x,w)\qtor B(x,b,w)\qtor B(x,w,b)\,.
    \]

    If $B(x,w,a)$ or $B(x,w,b)$ we are done.

    If $B(a,x,w)$, then from this and $(\dagger)$, applying \eqref{B2}, we have $B(w,x,a)$ and $B(b,w,a)$. Using \eqref{B5} we infer that $B(b,w,x)$, so $B(x,w,b)$ by \eqref{B2}.

    If $B(b,x,w)$, then $B(w,x,b)$ by \eqref{B2}. So from this and \eqref{B5} we deduce $B(a,w,x)$, and again we have what we need by \eqref{B2}.

    The last case is when $B(x,a,w)$ and $B(x,b,w)$. Then $B(x,a,b)$ by  the first condition, $(\dagger)$ and \eqref{B4}. So applying \eqref{B2} and \eqref{B5} we conclude that $B(a,b,w)$ which contradicts $(\dagger)$ via \eqref{B3}.

    The proof for the second conjunct is similar.
\end{proof}

\begin{lemma}\label{lem:for-D-soundness}
If $\frF\in\LBWE$, then
\[
\frF\models B(x,a,y)\wedge B(x,b,y)\wedge B(a,u,b)\rightarrow B(x,u,y)\,.
\]
\end{lemma}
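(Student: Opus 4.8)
The plan is to first pin down the relative position of $a$ and $b$ on the line through the common endpoints $x$ and $y$, and then finish with two applications of inner transitivity in the form \eqref{B5'}.

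First I would apply the second part of Lemma~\ref{lem:proj-for-B}, reading its hypotheses $B(a,u,y)\wedge B(a,w,y)$ as our $B(x,a,y)\wedge B(x,b,y)$ (i.e.\ instantiating the lemma's $a,u,w$ by our $x,a,b$). This yields
\[
a=b\qtor B(x,a,b)\qtor B(x,b,a)\,.
\]
The first disjunct is impossible: together with $B(a,u,b)$ it would give $B(a,u,a)$, contradicting \eqref{B1}. The third disjunct reduces to the second: replacing the hypothesis $B(a,u,b)$ by its mirror image $B(b,u,a)$ via \eqref{B2} and interchanging the names $a$ and $b$ leaves the remaining hypotheses and the goal unchanged while turning $B(x,b,a)$ into $B(x,a,b)$. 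Hence it suffices to treat the case $B(x,a,b)$.

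Assuming $B(x,a,b)$, I would first apply \eqref{B5'} to $B(x,a,b)\wedge B(a,u,b)$ to obtain $B(x,u,b)$. Rewriting $B(x,u,b)$ as $B(b,u,x)$ and $B(x,b,y)$ as $B(y,b,x)$ by \eqref{B2}, a second application of \eqref{B5'} to $B(y,b,x)\wedge B(b,u,x)$ gives $B(y,u,x)$, and a final use of \eqref{B2} yields the desired $B(x,u,y)$.

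I do not anticipate a genuine obstacle, the argument being a short chain of transitivity steps. The only thing needing a little care is choosing the right instance of Lemma~\ref{lem:proj-for-B} --- the one that compares the two ``inner'' points $a$ and $b$ lying between the same endpoints --- and noticing that the two non-degenerate cases it produces are symmetric under \eqref{B2}, so that only one of them has to be carried out in full.
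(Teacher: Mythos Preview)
Your proof is correct and somewhat more economical than the paper's. The paper does not invoke Lemma~\ref{lem:proj-for-B}; instead it applies \eqref{B6} twice, once to the triple $\{x,a,b\}$ and once to $\{y,a,b\}$, and then manually eliminates the bad configurations $B(a,x,b)$ and $B(a,y,b)$ before treating the four remaining combinations of $B(x,a,b)\vee B(x,b,a)$ with $B(a,b,y)\vee B(b,a,y)$, two of which lead to contradictions. By appealing to Lemma~\ref{lem:proj-for-B}(2) you effectively outsource the elimination of the $B(a,x,b)$ case, and your symmetry reduction via \eqref{B2} collapses the remaining work to a single chain of two \eqref{B5'} steps. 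Both arguments ultimately rest on the same transitivity axioms, but yours avoids the four-way case split at the cost of quoting an earlier lemma.
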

\begin{proof}
Suppose $B(x,a,y)$, $B(x,b,y)$ and $B(a,u,b)$. Thus $\#(x,a,b)$, and from \eqref{B6} we obtain three possibilities
\[
B(x,a,b)\qtor B(x,b,a)\qtor B(a,x,b)\,.
\]
Similarly, $\#(y,a,b)$ and analogous three possibilities hold for this triple.

Suppose $B(a,x,b)$. From $B(x,a,y)$ and  \eqref{B2} we obtain $B(y,a,x)$, and this together with the assumption entails $B(y,x,b)$ by \eqref{B4'}. This contradicts $B(x,b,y)$ by \eqref{B2} and \eqref{B3}. Therefore we have $B(x,a,b)$ or $B(x,b,a)$. By a similar argument, we have $B(a,b,y)$ or $B(b,a,y)$. In consequence there are four cases to be considered.

(1st) $B(x,a,b)$ and $B(a,b,y)$. Using $B(a,u,b)$ we obtain $B(x,a,u)$ by \eqref{B5}, and $B(a,u,y)$ by \eqref{B5'} and \eqref{B2}. Now by \eqref{B4'} we have $B(x,u,y)$.

(2nd) $B(x,b,a)$ and $B(b,a,y)$. From $B(a,u,b)$ and \eqref{B2} we have $B(b,u,a)$, and by a similar argument to the one above we obtain $B(x,u,y)$.

(3rd) $B(x,a,b)$ and $B(b,a,y)$. From $B(x,b,y)$ and \eqref{B5} we obtain $B(x,b,a)$, which is a contradiction by \eqref{B3}.

(4th) $B(x,b,a)$ and $B(a,b,y)$. The argument is similar to the previous case, using $B(x,a,y)$.
\end{proof}

\begin{lemma}\label{lem:for-CCp->Cp}
If $\frF\in\LBWE$, then
\[
\begin{split}
    \frF\models B(y,x,z)\wedge{} &B(y_0,y,y_1)\wedge B(z_0,z,z_1)\rarrow\\
    &B(y_0,x,z_0)\vee B(y_0,x,z_1)\vee B(y_1,x,z_0)\vee B(y_1,x,z_1)\,.
\end{split}
\]
\end{lemma}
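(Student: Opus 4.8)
The plan is to isolate a one-directional ``endpoint extension'' principle and apply it twice. Concretely, I would first establish the auxiliary claim that for every $\frF\in\LBWE$,
\[
B(y,x,z)\wedge B(y_0,y,y_1)\rarrow B(y_0,x,z)\vee B(y_1,x,z)\,.
\]
Geometrically, $x$ sits strictly on one side of $y$ (the side of $z$), while $y_0$ and $y_1$ sit on opposite sides of $y$, so whichever of $y_0,y_1$ lies on the side of $y$ away from $x$ still has $x$ between it and $z$.

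Granting the auxiliary claim, the lemma follows quickly. Applying it to $B(y,x,z)$ and $B(y_0,y,y_1)$ yields $B(y_i,x,z)$ for some $i\in\{0,1\}$, hence $B(z,x,y_i)$ by \eqref{B2}. Applying the auxiliary claim again, this time with $z$ playing the role of the flanked middle point, to $B(z,x,y_i)$ and $B(z_0,z,z_1)$ yields $B(z_j,x,y_i)$ for some $j\in\{0,1\}$, hence $B(y_i,x,z_j)$ by \eqref{B2}. Since $i,j\in\{0,1\}$, this is exactly one of the four disjuncts in the consequent.

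For the auxiliary claim I would argue as follows. If $y_1=x$ then $B(y_0,y,x)\wedge B(y,x,z)$ gives $B(y_0,x,z)$ by \eqref{B4'}, and the case $y_0=x$ is symmetric (via \eqref{B2}); so assume $y_0,y_1\neq x$. Since $x\neq y$ and $y\neq y_0$ by \eqref{B1} (applied to $B(y,x,z)$ and to $B(y_0,y,y_1)$ respectively), we have $\#(x,y,y_0)$, so \eqref{B6} gives one of $B(x,y,y_0)$, $B(x,y_0,y)$, $B(y,x,y_0)$. In the first, \eqref{B2} turns it into $B(y_0,y,x)$, and $B(y_0,y,x)\wedge B(y,x,z)$ gives $B(y_0,x,z)$ by \eqref{B4'}. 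In the second, \eqref{B4'} applied to $B(x,y_0,y)\wedge B(y_0,y,y_1)$ gives $B(x,y,y_1)$, i.e.\ $B(y_1,y,x)$ by \eqref{B2}, and then $B(y_1,y,x)\wedge B(y,x,z)$ gives $B(y_1,x,z)$ by \eqref{B4'}. In the third, \eqref{B2} rewrites $B(y_0,y,y_1)$ as $B(y_1,y,y_0)$, then $B(y_1,y,y_0)\wedge B(y,x,y_0)$ gives $B(y_1,y,x)$ by \eqref{B5}, and $B(y_1,y,x)\wedge B(y,x,z)$ gives $B(y_1,x,z)$ by \eqref{B4'}.

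I do not expect a genuine obstacle here; the only real work is bookkeeping — picking the correct instances of the inner/outer transitivity axioms \eqref{B4'} and \eqref{B5} in each branch, and clearing the degenerate coincidences among $x,y_0,y_1,z$ before invoking linearity \eqref{B6}. Conceptually one could instead transport the statement to the associated linear order $<_B$, where it is immediate, but since the appendix works directly with the axioms for $B$ I would keep the argument internal to $B$ as above.
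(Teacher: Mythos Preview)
Your proof is correct and follows essentially the same approach as the paper's: both reduce to showing that some $B(y_i,y,x)$ holds (and symmetrically some $B(x,z,z_j)$), via the case split from \eqref{B6} on the triple $\{y_0,y,x\}$ after clearing the degenerate coincidences, and then chain with \eqref{B4'}/\eqref{B4} to reach $B(y_i,x,z_j)$. The only cosmetic difference is packaging: you isolate the one-sided step as a named auxiliary claim and apply it twice via \eqref{B2}, whereas the paper states the two halves in parallel and writes out only the $y$-side case analysis.
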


\begin{proof}
It suffices to show that one of $B(y_0,y,x)$ and $B(y_1,y,x)$ holds, and one of $B(x,z,z_0)$ and $B(x,z,z_1)$ holds. Indeed, if $B(y_i,y,x)$ and $B(x,z,z_j)$ hold (where $i,j\in\{0,1\}$), then from $B(y_i,y,x)$, $B(y,x,z)$ and \eqref{B4'} we obtain $B(y_i,x,z)$, and then from $B(x,z,z_j)$ and \eqref{B4} we have $B(y_i,x,z_j)$.

\eqref{B1} and the antecedent of the implication imply that $y\neq x$ and $y_0\neq y$. If $x=y_0$ then $B(x,y,y_1)$, and from \eqref{B2} we obtain $B(y_1,y,x)$. If $x\neq y_0$ then $\#(y_0,y,x)$, so by \eqref{B6} we have $B(y_0,y,x)$ or $B(y_0,x,y)$ or $B(y,y_0,x)$.

If $B(y_0,y,x)$ then we are done.

If $B(y_0,x,y)$, then this with $B(y_0,y,y_1)$ imply $B(y_1,y,x)$ by means of \eqref{B2} and \eqref{B5}.

If $B(y,y_0,x)$, then from this and $B(y_0,y,y_1)$ we have $B(y_1,y,x)$ by \eqref{B4} and \eqref{B2}.
\end{proof}

\bibliographystyle{plainnat}

\providecommand{\noop}[1]{}

\end{document}